\newtheorem{th.}{Théorème}[section]
\newtheorem{lemme}[th.]{Lemme}
\newtheorem*{lemme*}{Lemme} 
\newtheorem*{pgd*}{Principe des grandes déviations} 
\newtheorem{prop}[th.]{Proposition}
\newtheorem*{prop*}{Proposition}
\newtheorem*{prop5.3bis*}{Proposition 5.3 bis}
\newtheorem*{def*}{Définition}
\newtheorem*{cor*}{Corollaire}
\newtheorem{cor.}[th.]{Corollaire}
\newtheorem{cor}[th.]{Corollaire}
\newtheorem*{rem*}{Remarque}
\newtheorem{rem.}[th.]{Remarque}
\newtheorem*{ex.}{Exemple}
\newtheorem*{th*}{Théorème}
\newtheorem*{th11}{Théorème 1.1}
\newtheorem*{th.*}{Théorème}
\newcommand{\T}{\mathbb{T}}
\newcommand{\R}{\mathbb{R}}
\newcommand{\Z}{\mathbb{Z}}
\newcommand{\Q}{\mathbb{Q}}
\newcommand{\N}{\mathbb{N}}
\newcommand{\PP}{\mathscr{P}}
\newcommand{\F}{\PP}
\newcommand{\Pbb}{\mathbb{P}}
\newcommand{\leb}{\text{leb}}
\newcommand{\ad}{\text{ad}}
\newcommand{\ag}{\mathfrak{a}}
\newcommand{\supp}{\text{supp }}
\title{Radon stationary measures for a random walk on  $\T^d\times \R$}
\author{Timothée Bénard}
\date{}
\begin{document}

\maketitle

\bigskip

\begin{abstract}
 We classify Radon stationary measures for a random walk on $\mathbb{T}^d \times \mathbb{R}$. This walk is realised by a random action of $SL_{d}(\mathbb{Z})$ on the $\mathbb{T}^d$ component, coupled with a translation on the $\mathbb{R}$ component. We  show, under assumptions of irreducibility and recurrence, the rigidity and homogeneity of Radon ergodic stationary measures.

\end{abstract}

\selectlanguage{french}

\begin{abstract}

On classifie les mesures de Radon stationnaires pour une marche aléatoire sur $\T^d \times \R$. Cette marche est réalisée par une action aléatoire de $SL_{d}(\Z)$ sur la composante en $\T^d$ couplée à une translation de la coordonnée $\R$. Nous démontrons sous des hypothèses d'irréductibilité et de récurrence la rigidité et l'homogénéité des mesures de Radon stationnaires ergodiques.
  \end{abstract} 
\bigskip

\bigskip

\bigskip

\section*{Introduction}
\large

Ce texte aborde le problème de classification des mesures stationnaires dans le cas d'une marche aléatoire sur un espace homogène de volume infini. Plus précisément, on se donne $G$ un groupe de Lie réel, $\Lambda \subseteq G$ un sous groupe discret, et $\mu \in \mathcal{P}(G)$ une probabilité sur $G$. On peut alors réaliser une marche aléatoire sur le quotient $X=G/\Lambda$ telle que la probabilité de transition à partir d'un point $x \in X$ est  la convolution $\mu\star \delta_{x}$, i.e. la probabilité image de $\mu$ par l'application $G\rightarrow X, g\mapsto g.x$. Comprendre une telle marche participe à décrire l'action de $G$ sur l'espace homogène $X$.
 
Le comportement d'une marche aléatoire est intimement lié à ses mesures stationnaires. Les mesures considérées seront toutes des mesures de Radon, i.e. positives et finies sur les compacts. Une mesure de Radon $\nu \in \mathcal{M}^{Rad}(X)$ est dite \emph{stationnaire} si elle est invariante par itération de la marche ``en moyenne'', ou plus formellement si : $\nu = \int_{G}g_{\star}\nu \,\,d \mu(g)$. On précise qu'elle est \emph{ergodique} si elle appartient à un rayon extrémal du cône convexe des mesures de Radon stationnaires sur $X$. 
 L'ergodicité d'une mesure stationnaire $\nu$ est une information clef pour étudier une marche aléatoire, car elle garantit l'équidistribution de $\nu$-presque toute trajectoire récurrente selon la mesure $\nu$. De plus, par désintégration ergodique, toute mesure stationnaire s'exprime comme moyenne intégrale de mesures stationnaires ergodiques.

\bigskip

En $2013$, Y. Benoist et J-F. Quint font progresser le sujet de manière considérable, en explicitant toutes les probabilités stationnaires pour une large classe de marches en \emph{volume fini}.

\begin{th*}[Benoist-Quint \cite{BQII}]
Soit $G$ un groupe de Lie réel, $\Lambda \subseteq G$ un sous groupe discret de covolume fini, $X= G/\Lambda$ et  $\mu \in \mathcal{P}(G)$ une probabilité sur $G$ dont le support est compact et engendre un sous groupe $\Gamma$ tel que $\widebar{\emph{Ad} \Gamma}^Z$ est semi-simple, Zariski connexe et sans facteur compact. Alors toute probabilité $\mu$-stationnaire ergodique $\nu$ sur $X$ est $\Gamma$-invariante et homogène.

\end{th*}

La notation Ad$\Gamma$ désigne l'image de $\Gamma$ par la représentation adjointe de $G$, et $\widebar{\text{Ad} \Gamma}^Z$ son adhérence de Zariski dans le groupe Aut$\mathfrak{g}$ des automorphismes de $\mathfrak{g}$. L'homogénéité signifie que le support de $\nu$ est une orbite de son stabilisateur $G_{\nu}:=\{g\in G, g_{\star}\nu=\nu\}$ (on peut ainsi identifier $\nu$ à une mesure $G_{\nu}$-invariante sur l'espace homogène $G_{\nu}/G_{\nu, x}$).

\bigskip

L'idée clef est le phénomène de  \emph{dérive exponentielle}, découvert  quelques années plus tôt dans \cite{BQI}. La classification des mesures stationnaires vient alors d'être établie par J. Bourgain, A. Furman, E. Lindenstrauss et S. Mozes dans le cas particulier d'une marche sur le tore $\T^d$ induite par une probabilité $\mu \in SL_{d}(\Z)$ proximale fortement irréductible (voir \cite{BFLM-CR}, \cite{BFLM}). Rappelons que la \emph{proximalité} de $\mu$ signifie que le semi-groupe $\Gamma \subseteq SL_{d}(\Z)$ engendré par son support contient un élément ayant une valeur propre simple de module strictement supérieur à celui de ses autres valeurs propres, et que l'hypothèse de \emph{forte irréductibilité} signifie que l'action $\Gamma$ sur $\R^d$ ne préserve pas de réunion finie de sous-espaces vectoriels propres non nuls. La preuve de \cite{BFLM-CR} s'appuie sur la théorie de Fourier et développe une approche quantitative permettant d'obtenir aussi des résultats d'équidistribution des probabilités de position $(\mu^n \star \delta_{x})_{x \in \T^d}$ quand $n$ tend vers l'infini. Cependant, la démarche repose de fa\c con cruciale sur l'hypothèse de proximalité. Y. Benoist et J-F. Quint proposent dans $\cite{BQI}$ une méthode complètement différente qui s'inspire des travaux de M. Ratner et ne requiert pas d'hypothèse de proximalité.  

\begin{th*}[Benoist-Quint \cite{BQI}, Bourgain-Furman-Lindenstrauss-Mozes \cite{BFLM-CR},\cite{BFLM}]
Soit $\mu \in \mathcal{P}(SL_{d}(\Z))$ une probabilité à support fini engendrant un sous semi-groupe $\Gamma \subseteq SL_{d}(\Z)$ fortement irréductible.

Alors toute probabilité $\mu$-stationnaire ergodique sur le tore $\T^d$ est soit une  équiprobabilité sur une $\Gamma$-orbite finie dans $\T^d$, soit la probabilité de Haar. 

\end{th*}

Les preuves des théorèmes  \cite{BQI} et \cite{BQII} ont été  reprises pour étudier des marches aléatoires dans des cadres similaires. Citons par exemple A. Eskin et  M. Mirzakhani qui décrivent les mesures invariantes et stationnaires pour l'action de $SL_{2}(\R)$ sur l'espace des modules des surfaces de translations (\cite{EskMirz}),  A. Brown et F. Rodriguez Hertz qui s'intéressent plus généralement à des marches par difféomorphismes sur des variétés compactes (\cite{BrRod}), ou encore O. Sargent et U. Shapira qui considèrent une marche sur un espace homogène défini par un stabilisateur non discret (\cite{SarShap}).

\bigskip
Ce texte aborde le cas où l'espace homogène supportant la marche est de mesure \emph{infinie}. Peu de résultats sont connus dans cette voie. 
On se concentre sur un premier cas concret : classer les mesures de Radon stationnaires pour une marche sur $\T^d \times \R$. On peut en effet réaliser $\T^d \times \R$ comme un espace homogène en posant $G := (SL_{d}(\Z)\ltimes \R^d) \,\oplus\, \R$, $\Lambda := (SL_{d}(\Z)\ltimes \Z^d)\, \oplus\, \{0\}$. On vérifie que $G/\Lambda \equiv \T^d \times \R$ et que via cette identification,  l'action d'un élément $(g,r,s) \in G$ est donnée pour $(x,t) \in \T^d \times \R$ par la formule $(g,r,s).(x,t)=(gx + r, s+t)$. Pour définir notre  marche aléatoire, on fixe une probabilité $\mu \in \mathcal{P}(SL_{d}(\Z))$, on note $\Gamma := \langle \text{supp}\,\mu \rangle \subseteq SL_{d}(\Z)$ le sous semi-groupe engendré par son support,  et on se donne $\chi : \Gamma  \rightarrow \R$ un morphisme de semi-groupes. On peut voir  $\Gamma$ comme un sous semi-groupe de $G$ via le plongement $i : \Gamma \hookrightarrow G, g \mapsto (g,0, \chi(g))$, et l'action de $\Gamma$ sur $G/\Lambda =\T^d\times \R$ est alors donnée par $g.(x,t)=(gx, t+ \chi(g))$. On va démontrer le théorème suivant :

\begin{th11}
Supposons  la probabilité $\mu \in \mathcal{P}(SL_{d}(\Z))$  à support fini engendrant un sous semi-groupe $\Gamma \subseteq SL_{d}(\Z)$ fortement irréductible, et supposons la probabilité image $\chi_{\star} \mu \in \mathcal{P}(\R)$ d'espérance nulle. 

Alors toute mesure de Radon $\mu$-stationnaire ergodique sur $\T^d \times \R$ est $\Gamma$-invariante et homogène. 
\end{th11}

Le \cref{TH0}  décrit aussi les probabilités stationnaires pour certaines marches sur $\T^d$ qui n'entrent pas dans le cadre du théorème de Benoist-Quint car induites par une probabilité sur $SL_{d}(\Z)$ qui n'a pas de moment d'ordre $1$. En effet, considérons le cas où $\chi(\Gamma)= \Z$. On peut restreindre notre marche sur $\T^d \times \R$ en une marche sur $\T^d \times \Z$, puis considérer la marche induite sur le bloc $\T^d \times \{0\} \equiv \T^d$. Cette marche est donnée par une probabilité $\mu_{\tau} \in \mathcal{P}(SL_{d}(\Z))$ sans  moment d'ordre $1$. Le \cref{TH0} implique que toute probabilité $\mu_{\tau}$-stationnaire ergodique sur $\T^d$ est soit atomique, soit la probabilité de Haar. 

Notre résultat peut  être adapté pour décrire les mesures de Radon stationnaires sur des produits plus généraux, de la forme $G/\Lambda \times \R^k$ où $G$ est un groupe algébrique réel simple, $\Lambda$ un réseau  dans $G$ et $k\in\{1,2\}$. On doit alors supposer que le semi-groupe $\Gamma$ est Zariski-dense dans $G$. Si $k\geq 3$ il apparaît  des difficultés venant du fait qu'une marche aléatoire, même centrée, sur $\R^3$ n'est pas nécessairement récurrente.


\bigskip
La démonstration du \cref{TH0} s'inspirera  de  \cite{BQII} mais nécessitera des adaptations pour gérer le volume infini de $G/\Lambda=\T^d \times \R$.   Le texte s'organise ainsi :

\bigskip

\emph{Section 1} : On présente une version plus détaillée du \cref{TH0}, et on explique  l'intérêt d'avoir supposé le paramètre de translation réelle $\chi$ de $\mu$-moyenne nulle. 

\emph{Section 2} : On prouve qu'une mesure de Radon stationnaire sur $\T^d\times \R$ admet une décomposition en mesures limites. Cela signifie qu'elle est une moyenne intégrale de mesures indexées par les trajectoires de la marche et vérifiant une certaine propriété d'équivariance. La section 2  étend des méthodes d'habitude employées pour l'étude de probabilités stationnaires.

\emph{Section 3} : On démontre le \cref{TH0} pour des mesures de Radon stationnaires ergodiques $\nu$ sur $\T^d\times \R$ dont le projeté sur $\T^d$ est atomique. Nous aurons besoin d'exclure ce cas par la suite. La preuve repose sur les propriétés de récurrence de la marche.

\bigskip

\emph{A partir de la section $4$, on fixe une mesure $\nu\in \mathcal{M}^{Rad}(\T^d\times \R)$ stationnaire ergodique dont le projeté sur $\T^d$ est sans atome.} 

\bigskip

\emph{Section 4}: Après quelques rappels sur les groupes algébriques, on explique la stratégie mise en oeuvre pour décrire la mesure $\nu$. L'idée est de montrer que les mesures limites décomposant $\nu$ sont chacune invariante par translation sur $\T^d$ suivant une certaine  direction limite. Pour cela, on définit un système dynamique fibré $(B^+, T^+, \beta^+)$ (de mesure infinie) et un flot $(\phi_{t})_{t\in \R^r}$ sur $B^+$ tels que l'invariance recherchée équivaut à l'invariance de la mesure $\beta^+$ sous l'action du flot $(\phi_{t})_{t\in\R^r}$. Après désintégration de $\beta^+$ le long de $(\phi_{t})_{t\in\R^r}$, on se ramène à un énoncé général sur $(B^+, T^+, \beta^+)$, appelé \emph{théorème de dérive exponentielle} dont la démonstration occupe le reste du texte. La preuve repose sur l'étude des fibres de l'opérateur  $T^+$, plus précisément de leur intersection avec une fenêtre $W\subseteq B^+$ de mesure finie fixée au préalable. 

\emph{Section 5}: On montre que les mesures limites de $\nu$ sur $\T^d\times \R$ sont non dégénérées, i.e. que leurs projections sur $\T^d$ sont  sans atomes. Cela permettra plus tard de choisir deux points de $B^+$ dont les fibres s'écartent à vitesse exponentielle. La preuve distingue les cas où le semi-groupe de translation $\chi(\Gamma) \subseteq \R$ est discret ou dense. Dans le premier cas, on se ramène à montrer la récurrence hors de $\{0\}$ pour une marche sur le tore. La difficulté est que cette marche n'a pas de moment d'ordre $1$. Dans le second cas, on raisonne par l'absurde et on utilise la récurrence de la marche et un argument de connexité.

\emph{Section 6} : On explicite les fibres de l'opérateur $T^+$ puis on donne une formule générale décrivant leur distribution au sein d'une fenêtre $W\subseteq B^+$ de mesure finie, du point de vue de la mesure restreinte $\beta^+_{|W}$. C'est \emph{l'équirépartition des morceaux de fibres}. 

\emph{Section 7} : On prouve un théorème local limite pour le cocycle $(\sigma, \chi)$ où $\sigma$ désigne le cocycle d'Iwasawa et $\chi$ le paramètre de translation réelle de la marche.  Grâce à la section $6$, cela permet d'estimer  selon quelle proportion une fibre du système dynamique $(B^+, T^+, \beta^+)$ rencontre la fenêtre $W \subseteq B^+$.

\emph{Section 8} : On démontre la \emph{loi des angles} qui permet de contrôler l'écart (en termes de norme et de direction) entre deux morceaux de  fibres du système dynamique $(B^+, T^+, \beta^+)$.

\emph{Section 9} : A l'aide des sections 5,6,7,8, on prouve le théorème de dérive exponentielle.

\emph{Section 10} : On explique comment le théorème de dérive exponentielle permet de conclure la preuve du \cref{TH0}.

\bigskip

\newpage

\tableofcontents


\newpage

 \section{Premières considérations}
 On présente une version détaillée du \cref{TH0}, et on explique  l'intérêt d'avoir supposé le paramètre de translation réelle $\chi$ de $\mu$-moyenne nulle. 

\bigskip

{\bf Notations}. On se donne $\mu \in \mathcal{P}(SL_{d}(\Z))$  une probabilité  sur $SL_{d}(\Z)$, on note $\Gamma := \langle \text{supp}\,\mu \rangle$  le sous semi-groupe de $SL_{d}(\Z)$ engendré par son support,  et on fixe $\chi : \Gamma \rightarrow \R$ un morphisme de semi-groupes. Ces données induisent une action de $\Gamma$ sur $\T^d \times \R$ via la formule $g.(x,t)=(gx, t + \chi(g))$, puis une marche aléatoire de probabilités de transitions données par $(\mu \star \delta_{(x,t)})_{(x,t) \in \T^d\times \R}$.  
 
 Notons $\mathcal{M}^{Rad}(\T^d\times \R)$ l'ensemble des mesures de Radon sur $\T^d\times \R$.  On dit qu'une mesure $\nu \in \mathcal{M}^{Rad}(\T^d\times \R)$ est  \emph{$\mu$-stationnaire} si $\nu = \int_{\Gamma} g_{\star} \nu \,d\mu(g)$.  On précise alors que $\nu$ est \emph{ergodique} si elle appartient à un rayon extrémal du cône des mesures de Radon $\mu$-stationnaires sur $\T^d\times \R$.

\bigskip

L'objectif de ce texte est de classer les mesures de Radon stationnaires pour une marche sur $\T^d \times \R$ :

\begin{th.}\label{TH0}
Soit  $\mu \in \mathcal{P}(SL_{d}(\Z))$ une probabilité à support fini engendrant un sous semi-groupe $\Gamma \subseteq SL_{d}(\Z)$ fortement irréductible, et soit $\chi : \Gamma \rightarrow \R$ un morphisme de semi-groupes tel que la probabilité $\chi_{\star}\mu \in \mathcal{P}(\R)$ est centrée. Alors toute mesure de Radon $\mu$-stationnaire ergodique sur $\T^d \times \R$ est $\Gamma$-invariante et homogène. 
\end{th.}

La \emph{forte irréductibilité} de $\Gamma$ signifie que l'action de $\Gamma$ sur $\R^d$ ne préserve pas de réunion finie de sous-espaces vectoriels propres non nuls. L'hypothèse $\chi_{\star}\mu$ \emph{centrée} signifie que ($\chi_{\star}\mu$ est a un moment d'ordre $1$ et) $\int_{\R}t\,\,d\chi_{\star}\mu(t)=0$. Enfin, l'\emph{homogénéité} d'une mesure de Radon $\nu$ sur $\T^d\times \R$  signifie que le support de $\nu$ est une orbite de son stabilisateur $G_{\nu}:=\{g\in (SL_{d}(\Z)\ltimes \R^d) \oplus\, \R, \,\,\,g_{\star}\nu=\nu\}$ (on peut ainsi identifier $\nu$ à une mesure $G_{\nu}$-invariante sur l'espace homogène $G_{\nu}/G_{\nu, x}$).

\bigskip

On constate que ce théorème décrit les mesures de Radon stationaires ergodiques, mais plus généralement toutes les mesures de Radon stationnaires sur $\T^d \times \R$ car elles s'écrivent comme une moyenne de mesures ergodiques (voir \cite{Bony}). Il en découle notamment la rigidité des mesures stationnaires sur $\T^d\times \R$ :

\begin{cor}
Dans le cadre du \cref{TH0}, toute mesure de Radon $\mu$-stationnaire sur $\T^d\times \R$ est $\Gamma$-invariante. 
\end{cor}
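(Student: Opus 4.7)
The plan is to deduce the corollary directly from \cref{TH0} by means of the ergodic decomposition of Radon $\mu$-stationary measures, as provided by the reference \cite{Bony} cited in the statement just above.

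First, I would start with an arbitrary Radon $\mu$-stationary measure $\nu \in \mathcal{M}^{Rad}(\T^d \times \R)$. Because $\T^d \times \R$ is a second-countable locally compact space, the cone of Radon $\mu$-stationary measures is a well-behaved Choquet-type cone, and the ergodic decomposition theorem in the setting of infinite invariant or stationary measures (as developed in \cite{Bony}) yields a measurable family $(\nu_{\xi})_{\xi}$ of ergodic Radon $\mu$-stationary measures on $\T^d \times \R$ and a measure $d\lambda(\xi)$ on the parameter space such that
\[ \nu = \int \nu_{\xi} \, d\lambda(\xi), \]
in the sense that for every compactly supported continuous function $f$ on $\T^d \times \R$ one has $\nu(f) = \int \nu_{\xi}(f)\, d\lambda(\xi)$.

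Next, I would apply \cref{TH0} to each component $\nu_{\xi}$. Since each $\nu_{\xi}$ is an ergodic Radon $\mu$-stationary measure on $\T^d \times \R$, the theorem tells us in particular that $\nu_{\xi}$ is $\Gamma$-invariant, i.e.\ $g_{\star} \nu_{\xi} = \nu_{\xi}$ for every $g \in \Gamma$. (We only need the $\Gamma$-invariance part of the conclusion; the homogeneity is not used here.)

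Finally, I would conclude by integrating: for every $g \in \Gamma$ and every compactly supported continuous $f$,
\[ g_{\star}\nu(f) = \nu(f \circ g) = \int \nu_{\xi}(f\circ g)\, d\lambda(\xi) = \int g_{\star}\nu_{\xi}(f)\, d\lambda(\xi) = \int \nu_{\xi}(f)\, d\lambda(\xi) = \nu(f), \]
so $g_{\star}\nu = \nu$. Since this holds for every $g \in \Gamma$, the measure $\nu$ is $\Gamma$-invariant. The main (and essentially only) conceptual point is the availability of the ergodic decomposition for Radon stationary measures in infinite volume, for which the reference \cite{Bony} is invoked; once this is granted the argument is purely formal.
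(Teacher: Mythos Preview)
Your argument is correct and is exactly the one the paper indicates: use the ergodic decomposition of Radon $\mu$-stationary measures from \cite{Bony}, apply \cref{TH0} to each ergodic component to get $\Gamma$-invariance, and integrate. There is nothing to add.
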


\bigskip

La preuve du \cref{TH0} fournira une classification plus explicite :

\begin{th.}\label{TH}
Soit  $\mu \in \mathcal{P}(SL_{d}(\Z))$ une probabilité à support fini engendrant un sous semi-groupe $\Gamma \subseteq SL_{d}(\Z)$ fortement irréductible et soit $\chi : \Gamma \rightarrow \R$ un morphisme de semi-groupes tel que la probabilité $\chi_{\star}\mu \in \mathcal{P}(\R)$ est centrée. On se donne une mesure de Radon $\mu$-stationnaire ergodique $\nu$ sur $\T^d \times \R$. Il y a plusieurs  cas:

\begin{itemize}
\item[$1$er] cas : il  existe un point $x \in \T^d$ pour lequel $\nu(x \times \R) >0$. Alors ce point est rationnel, on note $\omega := \Gamma x \subseteq \T^d$ sa $\Gamma$-orbite (finie) dans $\T^d$. Si $\chi(\Gamma) \subseteq \R$ est discret, alors $\nu$ est une mesure uniforme sur une $\Gamma$-orbite incluse dans $\omega \times \chi(\Gamma)$ ou un translaté. Si $\chi(\Gamma) \subseteq \R$ est dense, alors $\nu = \nu_{\omega} \otimes \text{leb}$ où $\nu_{\omega} \in \mathcal{M}^f(\T^d)$ est une mesure uniforme sur $\omega$.

\item[$2$ème]cas : la mesure projetée $\nu(. \times \R) \in \mathcal{M}^{Rad}(\T^d)$ est sans atome. La mesure $\nu$ est alors une mesure de Haar sur $\T^d \times \widebar{\chi(\Gamma)}$ ou un translaté. 
\end{itemize}
\end{th.}

\bigskip

Nous allons maintenant mettre en lumière quelques remarques essentielles pour la suite du texte, à savoir que la $\mu$-marche que l'on considère est récurrente presque sûre, et que les mesures de Radon $\mu$-stationnaires  sur $\T^d\times \R$ projetées sur l'axe réel sont $\chi(\Gamma)$-invariantes. Le lemme général derrière ce phénomène est le suivant :

\begin{lemme} \label{ref0}
Soit $m \in \mathcal{P}(\R)$ une probabilité centrée sur $\R$, notons $\Gamma_{m} \subseteq \R$ le semi-groupe engendré par son support. Alors :
\begin{itemize}
\item Pour tout $t_{0} \in \R$, presque toute $m$-trajectoire issue de $t_{0}$ revient arbitrairement proche de $t_{0}$. 
\item Toute mesure de Radon $m$-stationnaire sur $\R$  est $\Gamma_{m}$-invariante (pour l'action par translation). 
\end{itemize}
 En particulier, si $\Gamma_{m}$ est discret dans $\R$, alors $\Gamma_{m}$ est un sous groupe de $\R$. Si $\Gamma_{m}$ n'est pas discret, alors $\Gamma_{m}$ est dense dans $\R$ et la mesure de Lebesgue est la seule mesure de Radon $m$-stationnaire sur $\R$ (à scalaire près). 

\end{lemme}

\begin{proof}
Pour prouver le premier point, donnons nous $(\Omega, \mathcal{F}, \mathbb{P})$ un espace probabilisé et  $X_n : \Omega \rightarrow \R$ des variables aléatoires indépendantes de loi $m$. La loi des grands nombres affirme que presque pour presque tout $\omega \in \Omega$, on a   $X_{1}(\omega)+ \dots + X_{n}(\omega) = o(n)$. Le lemme $3.18$ de \cite{BQRW} sur la divergence des sommes de Birkhoff affirme que pour presque tout $\omega \in \Omega$, $(X_{1}(\omega)+ \dots + X_{n}(\omega))_{n \geq 0}$ admet une sous suite tendant vers $0$, ce qui donne le résultat.

Le deuxième point est un corollaire du théorème de Choquet-Deny (\cite{ChoDen}). On en donne une démonstration dans ce cas particulier. Soit $\nu \in \mathcal{M}^{Rad}(\R)$ $m$-stationnaire. Notons $B = \R^{\N^\star}$, $\beta= m^{\N^\star}$. Le théorème de convergence presque sûre des martingales positives permet de définir $\beta$-presque sûrement ses mesures limites $\nu_{b}= \lim (b_{n}\dots b_{1})_{\star}\nu \in \mathcal{M}^{Rad}(\R)$ (voir \cref{meslim}). D'après le premier point, nous avons $\nu_{b}= \nu$ $\beta$-ps. Par ailleurs, les $\nu_{b}$ vérifient la relation d'équivariance  : $b_{1\star}\nu_{b_{2}b_{3}\dots} = \nu_{b}$ $\beta$-ps. On en déduit que $\nu$ est invariante par $\mu$-presque tout élement de $\R$ d'où le résultat.  

Dans le cas où $\Gamma_{m}$ est discret, le premier point donne que $\Gamma_{m}$ est stable par passage à l'opposé, donc est un groupe. Si le semi-groupe $\Gamma_{m}$ n'est pas discret, le premier point assure qu'il est dense dans $\R$. Dans ce cas, soit $\lambda$ une mesure de Radon $\Gamma_{m}$ invariante sur $\R$. Comme le stabilisateur de $\lambda$ dans $\R$  est fermé, on a $\lambda$ $\R$-invariante,  donc  multiple de la mesure de Lebesgue.
\end{proof}

On en déduit les propriétés annoncées plus haut.

\begin{cor}\label{premcor}
Dans le cadre du \cref{TH0} :  Soit $\nu \in \mathcal{M}^{Rad}(\T^d\times \R)$ une mesure de Radon $\mu$-stationnaire sur $\T^d\times \R$. Alors  
\begin{itemize}
\item $\nu$ est conservative : pour $\nu$-presque tout $(x_{0},t_{0})\in \T^d\times \R$, presque toute $\mu$-trajectoire issue de $(x_{0},t_{0})$ revient arbitrairement proche de $(x_{0},t_{0})$. 
\item  La projection de $\nu$ sur $\R$, notée par la suite $\nu(\T^d\times .)\in \mathcal{M}^{Rad}(\R)$, est $\chi(\Gamma)$-invariante (et en particulier un multiple de la mesure de Lebesgue si $\chi(\Gamma)$ est dense dans $\R$).
\end{itemize}  
\end{cor}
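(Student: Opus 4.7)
Mon plan est de d�duire les deux points du \cref{ref0} appliqu� � la probabilit� centr�e $m := \chi_{\star} \mu \in \mathcal{P}(\R)$, dont le support engendre le semi-groupe $\chi(\Gamma)$.

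Pour le second point, la projection $\pi : \T^d \times \R \rightarrow \R$ conjugue l'action de $\Gamma$ sur $\T^d\times \R$ avec l'action de $\chi(\Gamma)$ par translation sur $\R$. En poussant l'identit� de stationnarit� $\nu = \int_{\Gamma} g_{\star} \nu \,d\mu(g)$ par $\pi$, il vient que $\pi_{\star} \nu \in \mathcal{M}^{Rad}(\R)$ est $m$-stationnaire. Le second item du \cref{ref0} fournit alors directement sa $\chi(\Gamma)$-invariance, et sa proportionnalit� � la mesure de Lebesgue dans le cas o� $\chi(\Gamma)$ est dense dans $\R$.

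Pour la conservativit�, j'observe qu'une trajectoire issue de $(x_{0},t_{0})$ s'�crit $(g_{n}\cdots g_{1}\,x_{0},\, t_{0}+\chi(g_{1})+\cdots+\chi(g_{n}))$, de sorte que sa coordonn�e r�elle est exactement une $m$-marche partant de $t_{0}$. Par le premier point du \cref{ref0}, celle-ci revient presque s�rement arbitrairement pr�s de $t_{0}$. Le point d�licat est de remonter cette r�currence au produit $\T^d\times\R$ : pour cela, j'exploite la compacit� du tore. Fixons une tranche compacte $W=\T^d\times[t_{0}-1,t_{0}+1]$, qui v�rifie $\nu(W)<\infty$ puisque $\nu$ est une mesure de Radon. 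Sur l'espace produit $(\T^d\times\R)\times \Gamma^{\N^\star}$ muni de la mesure invariante par d�calage $\nu\otimes \mu^{\otimes \N^\star}$, la r�currence en coordonn�e r�elle garantit que le temps de retour dans $W\times \Gamma^{\N^\star}$ est presque s�rement fini. On se ram�ne alors � un argument de r�currence de Poincar� pour l'application induite sur l'ensemble $W\times\Gamma^{\N^\star}$ de mesure finie, argument qui impose que presque toute trajectoire revisite tout voisinage ouvert de $(x_{0},t_{0})$. La seule difficult� technique est de v�rifier l'invariance de la mesure induite sous l'application de premier retour -- ceci est assur� par le lemme de Kac et par la finitude presque s�re des temps de retour, qui provient pr�cis�ment du caract�re centr� de $\chi_{\star}\mu$.
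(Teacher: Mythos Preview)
Your proof is correct and follows essentially the same route as the paper: both deduce the second point directly from \cref{ref0} applied to $\chi_\star\mu$, and for conservativity both use the recurrence of the real coordinate to guarantee finite return times to a compact slice $\T^d\times I$, then invoke Poincar\'e recurrence for the induced transformation on this finite-measure slice. The paper phrases the last step in terms of the induced Markov chain on $\T^d\times I$ rather than the skew-product on $(\T^d\times\R)\times\Gamma^{\N^\star}$, but the content is identical.
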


\begin{proof}
La conservativité de $\nu$ découle du premier point dans le \cref{ref0} appliqué à la probabilité centrée $\chi_{\star}\mu$. En effet, ce dernier garantit que pour tout ouvert borné de $\T^d\times \R$ de la forme $U:=\T^d\times]-r,r[$ avec $r >0$, presque toute $\mu$-trajectoire issue de $U$ revient dans $U$. Cela permet de considérer la marche induite sur $U$, qui est nécessairement récurrente par le théorème de récurrence de Poincaré. 

Par ailleurs, la mesure projetée $\nu(\T^d\times .) \in \mathcal{M}^{Rad}(\R)$ est $\chi_{\star}\mu$-stationnaire donc $\chi(\Gamma)$-invariante par le \cref{ref0}.

\end{proof}

Enfin, notons comme corollaire de la conservativité le résultat suivant :

\begin{cor}\label{nucons}
Soit $\nu, \nu' \in \mathcal{M}^{Rad}(\T^d\times \R)$ des mesures de Radon $\mu$-stationnaires sur $\T^d\times\R$. Supposons $\nu$ ergodique et $\nu' <<\nu$. Alors il existe une constante $c\geq0$ telle que $$\nu'=c\,\nu$$ 
\end{cor}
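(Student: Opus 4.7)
La strat�gie serait de ramener la preuve � une application du th�or�me de Hopf sur les transformations conservatives ergodiques, via un rel�vement au syst�me produit sur l'espace des chemins.

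On poserait $B=\Gamma^{\N^\star}$ muni de la mesure produit $\beta=\mu^{\otimes \N^\star}$ et du d�calage $\sigma$, et on consid�rerait la transformation $S : B\times (\T^d\times \R) \rightarrow B \times (\T^d\times \R)$ d�finie par $S(b,x)=(\sigma b, b_{1}x)$. Un calcul imm�diat montre que pour toute mesure de Radon $\eta$ sur $\T^d\times \R$, la $\mu$-stationnarit� de $\eta$ �quivaut � la $S$-invariance de la mesure $\sigma$-finie $\beta\otimes \eta$. L'hypoth�se $\nu'\ll \nu$ se rel�ve alors imm�diatement en $\beta\otimes \nu'\ll \beta\otimes \nu$.

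L'�tape pivot consiste � �tablir la conservativit� et l'ergodicit� de $S$ relativement � $\beta\otimes \nu$. La conservativit� d�coulerait du \cref{premcor} par un argument standard de retour de Poincar� dans un bor�lien de $B\times (\T^d\times \R)$ de mesure finie (du type $B\times \T^d\times]-r,r[$). Quant � l'ergodicit�, elle proviendrait de l'�quivalence entre l'ergodicit� de $\nu$ en tant que mesure extr�male dans le c�ne des mesures de Radon $\mu$-stationnaires et l'ergodicit� de $S$ au sens usuel sur $(B\times (\T^d\times \R), \beta\otimes \nu)$, correspondance standard d�velopp�e dans \cite{Bony}.

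Le th�or�me de Hopf (deux mesures $\sigma$-finies invariantes sous une transformation conservative ergodique et en relation d'absolue continuit� sont proportionnelles) fournirait alors une constante $c\geq 0$ v�rifiant $\beta\otimes \nu' = c\,(\beta\otimes \nu)$. On en d�duirait $\nu'=c\,\nu$ en passant aux marginales sur $\T^d\times \R$. L'obstacle principal sera la v�rification rigoureuse du passage de l'ergodicit� choquetienne de $\nu$ � l'ergodicit� usuelle de $S$ dans le cadre de mesure infinie, mais ceci s'inscrit dans le cadre standard de \cite{Bony}.
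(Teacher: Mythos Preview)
Your approach is correct and constitutes a genuinely different route from the paper's. The paper does not lift to the skew product $(B\times X,\beta\otimes\nu,S)$; instead it localizes. It observes that for every $r>0$ the restrictions $\nu_{|\T^d\times[-r,r]}$ and $\nu'_{|\T^d\times[-r,r]}$ are \emph{finite} measures that are stationary and, respectively, ergodic for the first-return walk on $\T^d\times[-r,r]$ (this first-return walk is well defined precisely because of the conservativity in \cref{premcor}). Standard finite-measure Markov chain ergodic theory (\cite{AarET}, \cite{BQRW}) then gives $\nu'_{|\T^d\times[-r,r]}=c_r\,\nu_{|\T^d\times[-r,r]}$, and letting $r\to\infty$ forces the constants to agree.

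Each approach has its merits. The paper's induction argument never leaves the realm of finite invariant measures, so it avoids invoking Hopf's theorem in infinite measure and, more importantly, avoids the very equivalence ``extremal in the cone $\Leftrightarrow$ $S$-ergodic'' that you flag as the principal obstacle; in fact the paper obtains that equivalence (\cref{equivergo}) as a \emph{byproduct} of this localization argument rather than as an input. Your route is more conceptual and global: once conservativity and $S$-ergodicity of $\beta\otimes\nu$ are granted, the Radon--Nikodym derivative $d(\beta\otimes\nu')/d(\beta\otimes\nu)$ is $S$-invariant hence constant, and one is done in a single stroke. The cost is that you must import the Choquet-type equivalence from \cite{Bony} (or reprove it), whereas the paper's argument is self-contained modulo classical finite-measure facts. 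Note also that your $S(b,x)=(\sigma b, b_1 x)$ is the forward skew product preserving $\beta\otimes\nu$, which is indeed the right object here; it differs from the backward map $T^X(b,x)=(Tb,b_1^{-1}x)$ used later in the paper, which preserves $\int\delta_b\otimes\nu_b\,d\beta(b)$ rather than $\beta\otimes\nu$.
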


\begin{rem.} En mesure infinie, cette propriété ne découle pas de l'ergodicité. Par exemple, la $m$-marche sur $\Z$ donnée par $m:=\frac{1}{3}\delta_{-1}+\frac{2}{3}\delta_{1} \in \mathcal{P}(\Z)$ admet deux mesures de Radons stationnaires ergodiques équivalentes non proportionnelles $$\nu_{0}:=\sum_{k\in \Z}\delta_{k}\,\,\,\text{et}\,\,\, \nu:=\sum_{k\in \Z} 2^k\delta_{k}$$.

\end{rem.}

\begin{proof}[Preuve du  \cref{nucons}]
Il suffit de montrer que pour tout $r> 0$, il existe $c\geq 0$ tel que  $$\nu'_{|\T^d\times [-r, r]} =c \,\nu_{|\T^d\times [-r, r]}$$
Or les mesures restreintes $\nu'_{|\T^d\times [-r, r]}$, $\nu'_{|\T^d\times [-r, r]}$ sont finies et stationnaires pour la marche de premier-retour sur $\T^d\times [-r, r]$, avec $\nu_{|\T^d\times [-r, r]}$ ergodique. Par la théorie ergodique des marches aléatoires en mesure finie on en déduit l'égalité annoncée. Pour plus de détails, on pourra consulter \cite{AarET} (section 1.5) et \cite{BQRW} (proposition 2.9). 

\end{proof}

\begin{rem.}\label{equivergo}
 Cette preuve s'adapte pour montrer qu'une mesure $\nu$ de Radon $\mu$-stationnaire sur $\T^d\times \R$ est ergodique si et seulement pour toute partie mesurable $A\subseteq \T^d\times \R$ telle que $\mu\star 1_{A}=1_{A}$ $\nu$-pp, on a $\nu(A)=0$ ou $\nu(\T^d\times \R-A)=0$. On fait ainsi le lien avec une autre définition courante de l'ergodicité.
\end{rem.}

\newpage

\section{Mesures limites}

La notion de mesure limite constitue un outil fondamental pour étudier les mesures stationnaires. Il s'agit d'écrire une mesure stationnaire comme moyenne intégrale de mesures indexées par les trajectoires de la marche et qui vérifient une propriété d'équivariance. Ce point de vue a été introduit par Furstenberg pour étudier des probabilités stationnaires. La section qui suit étend la notion de mesure limite au cas d'une mesure stationnaire de Radon (infinie) sur $\T^d\times \R$. Elle s'appuie sur l'annexe A qui traite  des mesures limites en général, pour une mesure de Radon stationnaire sur un espace localement compact à base dénombrable.

 La preuve du \cref{TH0}  consistera (essentiellement) à montrer que ces mesures limites sont invariantes par translation de la coordonnée en $\T^d$ selon des directions denses dans le tore. 

\bigskip

On reprend les notations de la section $1$. En particulier $\mu$ est une probabilité sur $\Gamma \subseteq SL_{d}(\Z)$ dont le poussé en avant $\chi_{\star}\mu \in \mathcal{P}(\R)$ est centré. On introduit $B := \Gamma^{\N^\star}$, $\beta:= \mu^{\otimes \N^\star} \in \mathcal{P}(B)$, $T : B \rightarrow B, \,b=(b_{i})_{i \geq 1} \mapsto (b_{i+1})_{i\geq 1}$ le shift unilatère. Le lemme suivant définit les mesures limites annoncées.

\begin{lemme}\label{torenu_{b}}
Soit $\nu \in \mathcal{M}^{Rad}(\T^d\times\R)$ une mesure de Radon $\mu$-stationnaire sur $\T^d\times \R$. Il existe une application mesurable $B \rightarrow \mathcal{M}^{Rad}(\T^d\times\R), b \mapsto \nu_{b}$ telle que pour $\beta$-presque tout $b \in B$, on a la convergence $(b_{1}\dots b_{n})_{\star} \nu \rightarrow \nu_{b}$ pour la topologie faible-$\star$. Cette application est unique en dehors d'un ensemble $\beta$-négligeable. De plus, 
\begin{itemize}
\item Pour $\beta$-presque tout $b\in B$, on a $(b_{1})_{\star}\nu_{Tb} = \nu_{b}$
\item $\nu = \int_{B} \nu_{b} \,\,d \beta(b)$
\item Pour tout intervalle $I \subseteq \R$ et $\beta$-presque tout $b \in B$, on a $\nu(\T^d \times I)= \nu_{b}(\T^d \times I)$
\end{itemize}
Ces mesures $(\nu_{b})_{b \in B}$ sont appelées les \emph{mesures limites associées à $\nu$}.
\end{lemme}

\bigskip

\begin{proof}
La convergence qui définit les $\nu_{b}$, la relation d'équivariance et l'inégalité  $\nu \geq \int_{B} \nu_{b} \,\,d \beta(b)$ sont démontrées dans le \cref{meslim} de l'annexe. Il reste à vérifier le troisième point. Fixons $I \subseteq \R$ un intervalle borné. 

\emph{1er cas: $\chi(\Gamma)\subseteq \R$ discret}. D'après le \cref{ref0}, $\chi(\Gamma)$ est un groupe, on peut donc supposer $\chi(\Gamma)=\Z$ puis $\nu$ portée sur $\T^d \times \Z$ par ergodicité. D'après l'inégalité $\nu \geq \int_{B} \nu_{b} \,\,d \beta(b)$, le résultat est vrai si $I$ ne rencontre pas $\Z$.  On peut donc supposer $I =]k-1/2, k+1/2[$ où $k \in \Z$. Soit $b \in B$ tel que $(b_{1}\dots b_{n})_{\star}\nu\rightarrow \nu_{b}$, $\nu_{b}(\T^d \times \partial I)=0$ et tel qu'il existe une extraction $\sigma : \N \rightarrow \N$ vérifiant $\chi(b_{1}\dots b_{\sigma(n)}) = 0$. On a  alors $\nu_{b}(\T^d\times I)= \lim \nu((b_{1}\dots b_{\sigma(n)})^{-1}  \T^d \times I)= \nu(\T^d\times I)$. 

\emph{2ème cas: $\chi(\Gamma)\subseteq \R$ dense.}  La mesure $\nu(\T^d\times .) \in \mathcal{M}^{Rad}(\R)$ est un multiple de la mesure de lebesgue (\cref{premcor}). 
En particulier, $\nu(\T^d \times \partial I)=0$. On se donne $b \in B$ tel que $(b_{1}\dots b_{n})_{\star}\nu\rightarrow \nu_{b}$, $\,\nu_{b}(\T^d \times \partial I)=0$ et tel qu'il existe une extraction $\sigma : \N \rightarrow \N$ vérifiant $\chi(b_{1}\dots b_{\sigma(n)}) \rightarrow 0$. Alors $\nu_{b}(\T^d\times I)= \lim \nu((b_{1}\dots b_{\sigma(n)})^{-1}  \T^d \times I)=\lim \nu(\T^d\times I-\chi(b_{1}\dots b_{\sigma(n)}) =\nu(\T^d \times I)$.

\end{proof}
On donne un premier corollaire de cette définition, qui autorise de modifier la probabilité $\mu$ induisant la marche :

\emph{Stationnarité et temps d'arrêt.}
Une mesure $\mu$-stationnaire est aussi $\mu_{\tau}$-stationnaire où $\mu_{\tau}$ désigne la mesure $\mu$ conditionnée par un temps d'arrêt $\tau$. Plus précisément, soit $\mathcal{B}_{n} \subseteq \mathcal{B}$ la sous tribu des $n$ premières coordonnées,   $\tau : B \rightarrow \N\cup\{\infty\}$ un temps d'arrêt pour la filtration $(\mathcal{B}_{n})_{n \geq 0}$. On suppose $\tau$ fini $\beta$-ps. On note $\mu_{\tau} \in \mathcal{P}(\Gamma)$ la loi de $b_{1}\dots b_{\tau(b)}$ quand $b$ varie selon $\beta$.
\begin{cor}\label{LIMcor}
La mesure $\nu$ est $\mu_{\tau}$-stationnaire.
\end{cor}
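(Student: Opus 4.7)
Le plan naturel est d'utiliser la décomposition en mesures limites $\nu = \int_B \nu_b\,d\beta(b)$ fournie par le lemme précédent, conjuguée à la propriété de Markov forte pour le shift de Bernoulli $(B,\beta,T)$. L'idée est que la $\mu$-stationnarité de $\nu$ se lit dans la relation d'équivariance $(b_1)_\star \nu_{Tb} = \nu_b$, et qu'il suffit de l'itérer un nombre de fois égal au temps d'arrêt $\tau(b)$.

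\textbf{Première étape :} j'itérerais la relation d'équivariance pour obtenir, pour tout $n\geq 0$, l'égalité $\nu_b = (b_1\dots b_n)_\star \nu_{T^nb}$ pour $\beta$-presque tout $b\in B$ (union dénombrable d'ensembles négligeables). Comme $\tau$ est fini $\beta$-presque sûrement, on peut alors évaluer en $n=\tau(b)$ et obtenir $\nu_b = (b_1\dots b_{\tau(b)})_\star \nu_{T^{\tau(b)}b}$ pour $\beta$-presque tout $b$.

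\textbf{Seconde étape :} je testerais l'égalité cherchée $\nu = \int_\Gamma g_\star\nu\,d\mu_\tau(g)$ contre une fonction $f\in C_c(\T^d\times \R)$ à l'aide de la décomposition $\nu = \int_B \nu_b \, d\beta(b)$. Cela donne $\int f\,d\nu = \int_B \int f\circ(b_1\dots b_{\tau(b)})\,d\nu_{T^{\tau(b)}b}\,d\beta(b)$. La propriété de Markov forte pour le shift de Bernoulli affirme que, conditionnellement à la tribu $\mathcal{B}_\tau$, la variable $T^{\tau(b)}b$ est indépendante de $\mathcal{B}_\tau$ et de loi $\beta$. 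Comme $b_1\dots b_{\tau(b)}$ est $\mathcal{B}_\tau$-mesurable, Fubini permet de remplacer, sous l'intégrale, $\nu_{T^{\tau(b)}b}$ par $\int_B\nu_{b'}\,d\beta(b') = \nu$. On aboutit alors à $\int f\,d\nu = \int_B\int f\circ(b_1\dots b_{\tau(b)})\,d\nu\,d\beta(b) = \int_\Gamma\int f\,d(g_\star\nu)\,d\mu_\tau(g)$, d'où la conclusion.

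Le seul point technique à vérifier est l'application de la propriété de Markov forte, c'est-à-dire que la loi jointe de $(b_1\dots b_{\tau(b)}, T^{\tau(b)}b)$ sous $\beta$ est le produit $\mu_\tau\otimes\beta$. Cela est classique pour le shift de Bernoulli et se démontre en décomposant l'intégrale selon les événements $\{\tau=n\}\in\mathcal{B}_n$ pour $n\in\N$. Aucun obstacle sérieux n'apparaît : la démarche est une application directe du Lemme 2.1 et de la propriété de Markov.
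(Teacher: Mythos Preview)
Votre démarche est correcte et coïncide avec celle du papier : les deux reposent sur la décomposition $\nu = \int_B \nu_b\,d\beta(b)$, l'équivariance itérée $\nu_b = (b_1\dots b_{\tau(b)})_\star \nu_{T^{\tau(b)}b}$, et la structure de Bernoulli de $(B,\beta)$. Le papier formule la propriété de Markov forte via l'application d'accélération $\rho_\tau : b \mapsto (b_1\dots b_{\tau(b)}, b_{\tau(b)+1}\dots b_{\tau^2(b)},\dots)$ et l'identité $\nu_b = \nu_{\rho_\tau(b)}$, alors que vous l'invoquez directement sur le couple $(b_1\dots b_{\tau(b)}, T^{\tau(b)}b)$ ; c'est la même idée sous deux présentations équivalentes.
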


\begin{proof}
Cela découle de l'égalité $\nu = \int_{B}\nu_{b}\,d\beta(b)$ et du  \cref{cond} de l'annexe A. 
\end{proof}

Dans le cas où $\chi(\Gamma) \subseteq \R$ est discret, nous serons souvent amenés à conditionner $\mu$ par rapport au temps d'arrêt  $\tau : B \rightarrow \N \cup \{\infty\}, b \mapsto \inf\{n \geq 1, \chi(b_{1}\dots b_{n})=0\}$ qui donne le temps de premier retour d'une trajectoire à son bloc de départ ($\beta$-presque sûrement fini d'après le \cref{premcor}). On note $\Gamma_{0} := \{g\in \Gamma, \chi(g)=0\}$ le semi-groupe engendré par le support de $\mu_{\tau}$. Le lemme suivant nous sera utile.

\begin{lemme} \label{0irr}
Le groupe $\Gamma_{0}$ est fortement irréductible sur $\R^d$. 
\end{lemme}

\begin{proof}

  Soit $g, h \in \Gamma$. Comme $\chi(\Gamma)$ est un groupe (\cref{ref0}), il existe $g',h' \in \Gamma$ tels que $\chi(g')= -\chi(g)$, $\chi(h')= -\chi(h)$. Ainsi le commutateur $[g,h]= (ghg'h')(hgg'h')^{-1}\in \Gamma_{0}\Gamma^{-1}_{0} \subseteq \widebar{\Gamma}^Z_{0}$. On en déduit que $ \widebar{\Gamma}^Z_{0} \supseteq \widebar{[\Gamma, \Gamma]}^Z=[G,G] \supseteq G_{c}$ où $G=\widebar{\Gamma}^Z$ est l'adhérence de Zariski de $\Gamma$ (semi-simple). Comme $\Gamma$ est fortement irréductible par hypothèse, c'est aussi le cas de $G$, puis de $\widebar{\Gamma}^Z_{0}$ et $\Gamma_{0}$.

\end{proof}

\newpage

\section{Cas atomique}

Dans cette section, on prouve le \cref{TH}  dans le cas où  la mesure projetée $\nu(. \times \R) \in \mathcal{M}(\T^d)$ admet un atome. Les notations sont celles des sections précédentes. 

\begin{prop}[cas atomique]\label{ThmAT}

Soit  $\mu \in \mathcal{P}(SL_{d}(\Z))$ une probabilité  à support fini engendrant un sous semi-groupe $\Gamma \subseteq SL_{d}(\Z)$ fortement irréductible,  et soit $\chi : \Gamma \rightarrow \R$ un morphisme de semi-groupes tel que la probabilité image $\chi_{\star}\mu \in \mathcal{P}(\R)$ est centrée. On se donne une mesure de Radon $\mu$-stationnaire ergodique $\nu$ sur $\T^d \times \R$ tel qu'il existe $x \in \T^d$ pour lequel $\nu(\{x \}\times \R) \in ]0, +\infty]$. Alors le point $x$ est rationnel. On note $\omega := \Gamma x \subseteq \T^d$ sa $\Gamma$-orbite. Il y a deux cas:

\begin{enumerate}
\item Si $\chi(\Gamma) \subseteq \R$ est discret, alors $\nu$ est une mesure uniforme sur une $\Gamma$-orbite incluse dans $\omega \times \chi(\Gamma)$ ou un translaté.

\item Si $\chi(\Gamma) \subseteq \R$ est dense, alors $\nu = \nu_{\omega} \otimes \text{leb}$ où $\nu_{\omega} \in \mathcal{M}^f(\T^d)$ est une mesure uniforme sur $\omega$. \end{enumerate}

\end{prop}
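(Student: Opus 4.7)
The plan is to proceed in three stages: first, reduce to a measure concentrated on $\omega \times \R$ for a finite, $\Gamma$-invariant set of rational points $\omega = \Gamma x \subseteq \T^d$; second, identify each fibre measure $\nu_y := \nu(\{y\} \times \cdot)$ for $y \in \omega$ via a first-return argument on top of \cref{ref0}; third, reassemble in the two cases. For the first stage, set $A := \{y \in \T^d : \nu(\{y\} \times \R) > 0\}$. This is at most countable because each bounded slice $\T^d \times [-n, n]$ has finite $\nu$-mass and hence only countably many fibre-atoms. Split $\nu = \nu_{at} + \nu_{nonat}$ into the parts carried by $A \times \R$ and its complement. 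Both pieces are $\mu$-stationary since each $g \in \Gamma$ acts bijectively on $\T^d$ and translationally on $\R$, so preserves fibre-atomicity. Extremality of $\nu$ in the cone of stationary Radon measures, combined with the mutual singularity of $\nu_{at}$ and $\nu_{nonat}$, forces one of them to vanish, and the hypothesis $\nu(\{x\} \times \R) > 0$ gives $\nu = \nu_{at}$.

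To show that $x$ is rational in case 1 ($\chi(\Gamma)$ discrete, without loss of generality equal to $\Z$), I would use the ergodic decomposition along the trivial $\Gamma$-action on $\R/\Z$ to assume $\nu$ is supported on $\T^d \times \Z$; then $\nu_0 := \nu|_{\T^d \times \{0\}}$, viewed as a finite measure on $\T^d$, is $\mu_\tau$-stationary and ergodic by \cref{LIMcor} and \cref{nucons}. The set of maximum atoms of $\nu_0$ is finite and, by the standard max principle applied to stationarity, $\Gamma_0$-invariant; by \cref{0irr} combined with a standard rigidity fact---any finite invariant subset of $\T^d$ under a strongly irreducible subsemigroup of $SL_d(\Z)$ consists of rational points, since the $\Z$-submodule spanned by a lifted orbit together with $\Z^d$ is strongly-irreducibly invariant and of finite index, hence contained in $\frac{1}{N}\Z^d$ for some $N$---these atoms are rational. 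Iterating across successive atom levels rationalises every atom of $\nu_0$. In case 2 ($\chi(\Gamma)$ dense), I would first perform the fibre analysis below to conclude $\nu = \nu_\omega \otimes \leb$ for some finite atomic $\mu$-stationary ergodic $\nu_\omega$ on $\T^d$, and invoke the Benoist-Quint theorem \cite{BQI} on $\T^d$ to get $\nu_\omega$ uniform on a finite (rational) $\Gamma$-orbit. In either case, $\omega := \Gamma x$ is finite and $\Gamma$-invariant (bijectivity on a finite orbit gives $g\omega = \omega$ for every $g$), so splitting $\nu$ over $\omega \times \R$ versus $(\T^d \setminus \omega) \times \R$ with ergodicity gives $\nu$ supported on $\omega \times \R$.

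For the fibre analysis, fix $y \in \omega$. The projected walk on the finite set $\omega$ is a recurrent finite Markov chain, so the first-return time $\tau_y(b) := \inf\{n \geq 1 : b_1 \cdots b_n y = y\}$ is $\beta$-a.s.\ finite with $\mathbb{E}[\tau_y] < \infty$. Let $\mu^{(y)}$ denote the law of $b_1 \cdots b_{\tau_y}$; by \cref{LIMcor}, $\nu_y$ is $\chi_\star \mu^{(y)}$-stationary on $\R$. Optional stopping gives $\int t \, d(\chi_\star \mu^{(y)})(t) = \mathbb{E}[\tau_y] \cdot \int t \, d(\chi_\star \mu)(t) = 0$, so $\chi_\star \mu^{(y)}$ is centered. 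The support of $\chi_\star \mu^{(y)}$ generates $\chi(\Gamma_y)$ (since every loop at $y$ factors as a product of first-return excursions), which is a finite-index subgroup of $\chi(\Gamma)$ and hence has the same density character. \cref{ref0} then gives $\nu_y$ invariant under this subgroup: in case 2, $\nu_y = c_y \cdot \leb$ for some $c_y > 0$; in case 1, $\nu_y$ is a $p_y\Z$-invariant discrete measure.

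To assemble: in case 2, $\nu = \bigl(\sum_{y \in \omega} c_y \delta_y\bigr) \otimes \leb = \nu_\omega \otimes \leb$, and stationarity of $\nu$ gives that $\nu_\omega$ is a finite, $\mu$-stationary, ergodic measure on the transitive $\Gamma$-set $\omega$, hence uniform on $\omega$ (unique stationary measure for the irreducible finite Markov chain). In case 1, an ergodic decomposition along the trivial $\R/\Z$-action yields a translate parameter $s \in \R/\Z$ such that $\nu$ is supported on $\omega \times (s + \Z)$; each $\nu_y$ is $p_y\Z$-invariant and discrete, and a final max principle on the countable discrete support, together with uniqueness of the stationary measure on the transitive $\Gamma$-orbit, forces $\nu$ uniform on a single $\Gamma$-orbit within $\omega \times (s + \Z)$. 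The principal obstacle is establishing the finite orbit in case 2 without the cleanness of a discrete base: this requires the bootstrap through the fibre analysis and invocation of Benoist-Quint on the marginal; identifying the correct translate parameter $s$ in case 1 also requires care.
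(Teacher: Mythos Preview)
Your case 1 argument is essentially the paper's: restrict to $\T^d\times\Z$, pass to the block-return law $\mu_\tau$, use the maximal-atom principle on the finite $\mu_\tau$-stationary measure $\nu_0$, and invoke strong irreducibility of $\Gamma_0$ (\cref{0irr}) to force rationality. Your subsequent fibre-by-fibre identification via the point-return time $\tau_y$ is a legitimate alternative to the paper's direct comparison with the uniform measure on $\omega\times\Z$; it works precisely because at that stage $\omega$ is already known to be finite, so $\tau_y$ is the return time of a finite (hence positive recurrent) chain and Wald applies.

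The genuine gap is case 2. Your fibre analysis opens with ``the projected walk on the finite set $\omega$'' and uses $\mathbb{E}[\tau_y]<\infty$ to run Wald's identity, center $\chi_\star\mu^{(y)}$, and deduce density of the semigroup it generates. But you only obtain finiteness of $\omega$ \emph{after} the fibre analysis, by applying Benoist--Quint to the marginal $\nu_\omega$. This is circular: without finiteness, you have no reason for $\tau_y<\infty$ a.s., let alone $\mathbb{E}[\tau_y]<\infty$, so neither the centering nor the density of $\chi(\Gamma_y)$ is available. You correctly flag this as ``the principal obstacle'', but the bootstrap as described does not close.

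The paper breaks the circle by a different mechanism: it first proves that $\nu$ is $\Gamma$-\emph{invariant}. The argument uses conservativity of $\nu$ (\cref{premcor}) together with the limit-measure decomposition (\cref{torenu_{b}}) to show that for $\beta$-a.e.\ $b$ one has $\nu_b\geq\nu_{|\{x\}\times\R}$, hence $\nu_b=\nu$, hence $g_\star\nu=\nu$ for every $g\in\text{supp}\,\mu$. Once $\nu$ is $\Gamma$-invariant, it is invariant under the derived semigroup $[\Gamma,\Gamma]$ acting on the $\T^d$-coordinate alone (since $\chi$ vanishes on commutators); the Radon condition on a bounded slice $\{x\}\times I$ then forces $[\Gamma,\Gamma]\cdot x$ finite, and strong irreducibility of $[\Gamma,\Gamma]$ (its Zariski closure contains $G_c$) gives rationality of $x$ and finiteness of $\omega$. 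Only then is $\nu$ compared with $\nu'=\sum_{y\in\omega}\delta_y\otimes\leb$.
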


\bigskip

\begin{rem*}
Dans le premier cas de la proposition, le sous ensemble $\omega \times \chi(\Gamma)\subseteq \T^d \times \R$ est stable par $\Gamma$ mais peut contenir plusieurs $\Gamma$-orbites. Par exemple, considérons les matrices $g_{0}:= \begin{pmatrix}1 & 2\\ 0& 1\\ \end{pmatrix}$, $g_{1}:= \begin{pmatrix}1 & 0\\ 2& 1\\ \end{pmatrix}$ et $\mu:= \frac{1}{4}(\delta_{g_{0}} +\delta_{g^{-1}_{0}} + \delta_{g_{1}} + \delta_{g^{-1}_{1}})$. Le semi-groupe $\Gamma$ engendré par le support de $\mu$ est un sous groupe de $SL_{2}(\Z)$, librement engendré par $g_{0}, g_{1}$, et Zariski dense dans $SL_{2}(\R)$. On note $\chi : \Gamma \rightarrow \Z$ le morphisme défini par $\chi(g_{0})=0$, $\chi(g_{1})=1$. Le point $x = (\frac{1}{4}, 0 ) \in \T^2$ est tel que $g_{0}.x=x$, $g_{1}.x \neq x$, $g_{0}g_{1}x=g_{1}x$, $g^2_{1} .x = x$. Ainsi $\Gamma x= \{x, g_{1}x\}$ dans $\T^2$.  Finalement,  dans $\T^2 \times \R$,  l'ensemble $\Gamma . (x, 0):= \{(y,k) \in \T^2 \times \Z, \,y= x \,\text{si $k$ pair}, \,\,y = g_{1}x\, \text{sinon}\}$ est une $\Gamma$-orbite strictement incluse dans $\Gamma x \times \chi(\Gamma)$.
\end{rem*}

\bigskip
La suite de la section est consacrée à la preuve de la proposition. 

\bigskip

\emph{$1)$ Cas où $\chi(\Gamma) \subseteq \R$ est discret}. 

\bigskip

Rappelons que $\chi(\Gamma)$ est alors un sous groupe de $\R$ (via le \cref{ref0}). Le cas où $\chi(\Gamma)=\{0\}$ est celui traité par Benoist-Quint. On peut supposer $\chi(\Gamma)=\Z$ et $\nu$ portée par $\T^d \times \Z$. Il existe alors $k \in \Z$ tel que $\nu(x \times \{k\})>0$. On note $D_{k}= \T^d \times \{k\}$ le $k$-ième bloc de $\T^d \times \Z$, et $\tau : B \rightarrow \llbracket 1, +\infty \rrbracket, b \mapsto \inf\{n \geq 1, \chi(b_{1}\dots b_{n})=0\}$ le temps que met une trajectoire pour revenir au bloc duquel elle est partie. C'est un temps d'arrêt fini $\beta$-ps. On note $\mu_{\tau} \in \mathcal{P}(\Gamma)$ la loi de $b_{\tau(b)}\dots b_{1}$ (qui est aussi celle de $b_{1}\dots b_{\tau(b)}$). La loi $\mu_{\tau}$ régit la chaîne de Markov induite sur $D_{k}$ donc $\nu_{|D_{k}}$ est $\mu_{\tau}$-stationnaire ergodique. C'est aussi le cas de sa partie atomique de poids maximal. On en déduit que $\nu_{|D_{k}}$ est une mesure uniforme sur une $\Gamma_{0}$-orbite finie, où  $\Gamma_{0} = \{g\in \Gamma, \chi(g)=0 \}\subseteq \Gamma$ est le semi-groupe engendré par le support de $\mu_{\tau}$.

Le \cref{0irr} assure que l'action de $\Gamma_{0}$ sur $\R^d$  est irréductible donc  que les atomes de $\nu_{|D_{k}}$ sont des points rationnels dans $\T^d$.
Notons $\omega=\Gamma.x \subseteq \mathbb{Q}^d/ \Z^d$ la $\Gamma$-orbite de $x$ dans $\T^d$. La mesure $\nu'= \sum_{y\in \omega} \delta_{y} \otimes \sum_{k \in \Z} \delta_{k} \in \mathcal{M}^{Rad}(\T^d\times \R)$ est $\mu$-stationnaire et $\nu << \nu'$. Le support de $\nu$ étant dénombrable, les mesures $\nu'_{|\text{supp} \nu}$ et $\nu$ sont donc équivalentes puis proportionnelles (par le \cref{nucons} et l'ergodicité de $\nu$), ce qui conclut.

\bigskip

\bigskip

\emph{$2)$ Cas où $\chi(\Gamma) \subseteq \R$ est dense}. 

\bigskip

\begin{lemme}
La mesure $\nu$ est $\Gamma$-invariante. 
\end{lemme}

\begin{proof}
Il suffit de montrer que pour $\beta$-presque tout $b \in B$, on a $\nu_{b}= \nu$. Le résultat découle alors de la relation d'équivariance sur les $(\nu_{b})_{b \in B}$.

Soit $x \in \T^d$ tel que $\nu(x\times \R) \in ]0, +\infty]$, et $t \in \R$ tel que $(x,t) \in \text{supp}\, \nu_{|x \times \R}$. Si $I \subseteq \R$ est un intervalle ouvert contenant $t$, on a $\nu(x \times I)>0$, donc la conservativité de $\nu$ assure que presque toute $\mu$-trajectoire issue de $\{x\}\times I$ revient infiniment souvent dans $\{x\}\times I$. En considérant une base dénombrable de voisinages de $t$, on obtient que pour $\beta$-presque tout $b \in B$, il existe une extraction $\sigma : \N \rightarrow \N$ telle que pour tout $n \geq 0$, on a $b_{1}\dots b_{\sigma(n)} .x=x$, et $\chi(b_{1}\dots b_{\sigma(n)}) \rightarrow 0$. 

On en déduit que pour $\beta$-presque tout $b \in B$, 
$$\nu_{b}= \lim (b_{1}\dots b_{\sigma(n)})_{\star}\nu \geq \lim (b_{1}\dots b_{\sigma(n)})_{\star}(\nu_{|x \times \R}) = \nu_{|x \times \R}$$
puis $\nu_{b |x \times \R} \geq \nu_{|x \times \R}$. Comme $\nu = \int_{B}  \nu_{b} \,\,d\beta(b)$, on a donc finalement l'égalité : 
$$\forall^\beta b \in B, \,\nu_{b|x \times \R}= \nu_{|x \times \R}$$
Par ailleurs, l'ergodicité de $\nu$ implique que $\nu$ et les $\nu_{b}$ sont concentrées sur $\{y \in \Gamma.x, \nu(y\times \R)>0\} \times \R$. Le résulat du paragraphe précédent étant valable pour chacun de ces points $y$, et l'orbite $\Gamma.x \subseteq \T^d$ étant dénombrable, on en déduit que :
$$\forall^\beta b \in B, \,\nu_{b}= \nu$$
ce qui conclut.
\end{proof}
 
Déduisons du lemme précédent que $x$ est rationnel. On note $[\Gamma, \Gamma] \subseteq SL_{d}(\Z)$ le ``semi-groupe dérivé'', i.e. le semi-groupe engendré par les commutateurs $[g,h]$ pour $g, h \in \Gamma$. La $\Gamma$-invariance de $\nu$ implique que $\nu$ est invariante pour l'action de $[\Gamma, \Gamma]$ sur la coordonnée $\T^d$. Soit $I \subseteq \R$ un intervalle borné tel que $\nu(x \times I)>0$. Alors $\nu([\Gamma, \Gamma].x \times I) = \sharp ([\Gamma, \Gamma].x)\,\, \nu(x \times I)$. Le caractère Radon de $\nu$ implique donc que l'orbite de $x$ sous  $[\Gamma, \Gamma]$ est finie. Or le semi-groupe dérivé est irréductible sur $\R^d$ (car l'adhérence de Zariski $G= \widebar{\Gamma}^Z$ de $\Gamma$ dans $SL_{d}(\R)$ est semi-simple, fortement irréductible, et $\widebar{[\Gamma, \Gamma]}^Z$ contient sa composante neutre $G_{c}$). Cela entraine que $x$ est rationnel.

Notons $\omega := \Gamma.x$ la $\Gamma$-orbite de $x$ dans $\T^d$ (finie d'après le paragraphe précédent), et introduisons la mesure $\nu':=\sum_{y\in \omega} \delta_{y}\otimes \leb \in \mathcal{M}(\T^d\times \R)$. C'est une mesure de Radon sur $\T^d \times \R$ qui est  $\mu$-stationnaire (car $\Gamma$-invariante). Le \cref{premcor} affirme qu'elle est donc conservative. Pour montrer que $\nu$ et $\nu'$ sont proportionnelles, il suffit donc de  vérifier que $\nu << \nu'$ et que $\nu'$ est $\mu$-ergodique (cf. \cref{nucons}).

\bigskip

Vérifions que $\nu << \nu'$ : L'ergodicité de $\nu$ implique que $\nu$ est concentrée sur $\omega \times \R$. Il suffit donc de voir que $\nu_{|y \times \R} << \delta_{y}\otimes \leb$ pour tout $y \in \omega$, et cela est fourni par le deuxième point du \cref{premcor}.

Vérifions que $\nu'$ est $\mu$-ergodique :  Par la \cref{equivergo} et la $\Gamma$-invariance de $\nu'$, on est ramené à montrer l'énoncé suivant : Soit $A \subseteq \T^d \times \R$  une partie mesurable telle que $\nu'(A)>0$ et $g.A = A$ pour tout $g \in \Gamma$,  alors $\nu'(\T^d \times \R - A)=0$. Soit $y \in \omega$. La $\Gamma$-invariance de $\nu'$ entraine que $\nu'(A\cap (\{y\}\times \R))>0$. Introduisons $\widetilde{\Gamma} \subseteq \text{Diff}(\T^d \times \R)$ le groupe des difféomorphismes de $\T^d \times \R$ engendré par l'action de $\Gamma$, et $\widetilde{\Gamma}_{y}$ le stabilisateur de $\{y\} \times \R$ dans $\widetilde{\Gamma}$. Le sous groupe $\widetilde{\Gamma}_{y}$ agit sur $\{y\}\times \R$ par translation de la coordonnée réelle et en préservant $A \cap (\{y\}\times \R)$. Comme $\widetilde{\Gamma}_{y}$ est d'indice fini dans $\widetilde{\Gamma}$, l'ensemble des translations possibles est dense dans $\R$.  On en déduit que $A\cap (\{y\}\times \R) = \{y\}\times \R$ $\delta_{y}\otimes \leb$ presque-partout. Comme cela vaut pour tout $y \in \omega$, on conclut que $A= \T^d \times \R$ $\nu'$-pp.

\bigskip

On a finalement montré que l'atome $x$ est rationnel et que les mesures $\nu$ et $\nu'=\sum_{y\in \Gamma.x} \delta_{y}\otimes \leb$ sont proportionnelles. La preuve est donc terminée.

\newpage

\section{Réduction du cas non atomique}

D'après la conclusion de la section $3$, nous avons réduit le problème au cas où la mesure projetée $\nu(.\times \R) \in \mathcal{M}(\T^d)$ est sans atome. Le reste du texte est consacré à la preuve du résultat suivant :

\begin{th.}\label{THnonat}

Soit  $\mu \in \mathcal{P}(SL_{d}(\Z))$ une probabilité à support fini engendrant un sous semi-groupe $\Gamma \subseteq SL_{d}(\Z)$ fortement irréductible, et soit $\chi : \Gamma \rightarrow \R$ un morphisme de semi-groupes tel que la probabilité $\chi_{\star}\mu \in \mathcal{P}(\R)$ est centrée. Alors toute mesure de Radon $\mu$-stationnaire ergodique $\nu$ sur $\T^d \times \R$ dont la projection $\nu(. \times \R) \in \mathcal{M}(\T^d)$ est sans atome est (à translation près) une mesure de Haar sur $\T^d \times \widebar{\chi(\Gamma)}$.

\end{th.}

Toute mesure $\mu$-stationnaire (ergodique) sur $\T^d\times \R$ étant $\frac{1}{2}(\mu +\delta_{e})$-stationnaire (ergodique), et le cas où $\chi\equiv 0$ ayant été traité dans \cite{BQII}, on pourra supposer:
$$\mu(e)>0 \,\,\, \,\,\,et\,\,\,\,\,\,\chi \nequiv 0$$

\bigskip

La méthode que nous allons employer s'inspire de celle de Benoist-Quint (\cite{BQII}). L'approche consiste à prouver que les mesures limites $\nu_{b} \in \mathcal{M}^{Rad}(\T^d\times \R)$ sont invariantes par translations de la coordonnée  $\T^d$ suivant des directions denses dans le tore. On commence par des rappels sur les groupes algébriques réels semi-simples. Remarquons que l'adhérence de Zariski $G := \widebar{\Gamma}^Z$ de $\Gamma$ dans $SL_{d}(\R)$ est bien semi-simple (\cite{BQI}, lemme 8.5).

\subsection{Rappels sur les groupes algébriques} \label{REDrap}

\bigskip

Cette sous-section rappelle les théorèmes de structure des groupes algébriques réels semi-simples, les notions de projection de Cartan et de cocycle d'Iwasawa, leur interprétation en terme de représentations, une description de la variété drapeau à partir de représentations proximales, et enfin les propriétés d'une marche aléatoire sur la variété drapeau. On conseille pour une première lecture de supposer le groupe $G$ en question Zariski-connexe, ce qui allège les notations. Pour plus de détails, on peut consulter la référence \cite{BQRW}, notamment les sections $6.7-6.10$, $8.1-8.5$. Les groupes algébriques considérés seront tous \emph{linéaires}. 

\bigskip

Soit $G$ un groupe algébrique réel semi-simple, $K$ un sous groupe compact maximal de $G$, $\mathfrak{g}$ et $\mathfrak{k}$ leurs algèbres de Lie.  On note $\mathfrak{s}:= \mathfrak{k}^\perp$  l'orthogonal de $\mathfrak{k}$ pour la forme de Killing sur $\mathfrak{g}$. Comme elle est non-dégénérée sur $\mathfrak{k}$, on a $\mathfrak{k} \oplus \mathfrak{s}= \mathfrak{g}$. Soit $\mathfrak{a} \subseteq \mathfrak{s}$ une sous-algèbre de Lie abélienne maximale. C'est un sous-espace de Cartan i.e. une sous algèbre de $\mathfrak{g}$ abélienne ad-diagonalisable sur $\mathfrak{g}$ et maximale pour ces propriétés (mais elle n'est pas forcément abélienne maximale dans $\mathfrak{g}$). On note $\mathfrak{g} := \mathfrak{z} \oplus (\oplus_{\alpha \in \Sigma} \mathfrak{g}_{\alpha})$ la décomposition de $\mathfrak{g}$ en espaces de racines associée, où $\Sigma \subseteq \mathfrak{a}^\star-\{0\}$ désigne l'ensemble des racines de $\mathfrak{a}$ dans $\mathfrak{g}$, et $\mathfrak{g}_{\alpha}:= \{y \in \mathfrak{g}, \,\,\forall x \in \mathfrak{a}, \,\ad x(y)=\alpha(x)y \}$, $\mathfrak{z}:=  \{y \in \mathfrak{g},\,\, \forall x \in \mathfrak{a},\, \ad x(y)=0 \}$. On choisit ensuite $\Pi \subseteq \Sigma$ une base de $\Sigma$ et on note $\Sigma^+ \subseteq \Sigma$ l'ensemble des racines positives de $\Sigma$ (i.e. s'écrivant comme somme d'éléments de $\Pi$). Posons $\mathfrak{u}:= \oplus_{\alpha \in \Sigma^+}  \mathfrak{g}_{\alpha}$ sous-algèbre nilpotente de $\mathfrak{g}$, $ \mathfrak{a}^+ := \{x \in  \mathfrak{a}, \,\forall \alpha \in \Sigma^+, \alpha(x)\geq0\}$.   

On revient maintenant au groupe $G$. On note $G_{c}$ la composante Zariski-connexe de $G$ contenant l'élément neutre, $K_{c}= K \cap G_{c}$ sous groupe compact maximal de $G_{c}$. On appelle $A$ et $U  \subseteq G_{c}$ les sous groupes de $G$ Zariski-fermés Zariski-connexes d'algèbres de Lie respectives $ \mathfrak{a}$ et $ \mathfrak{u}$ (ils existent bien). On note $P:=N_{G}( \mathfrak{z}\oplus  \mathfrak{u})$ le sous-groupe parabolique minimal associé à $ \Sigma^+$. On peut écrire $P=M\exp(\ag)U$ où $M:=P \cap K$  est un sous groupe qui rencontre chaque composante connexe de $G$, qui normalise $A$ (donc $\exp(\ag)$), et tel que $M_{c}:= M \cap G_{c}$ centralise $A$. 

Introduisons maintenant quelques décompositions utiles de $G$. 

\begin{lemme}[Décomposition de Cartan] \label{rap1}
$$G= K \exp( \mathfrak{a}^+) K_{c}$$
Plus précisément, pour tout $g \in G$, il existe $k \in K, l_{c}\in K_{c}$ et un unique élément $x \in  \mathfrak{a}^+$ tel que $g=k \exp(x) l_{c}$. On note $ \kappa(g) =x$, appelé projection de Cartan de $g$.
\end{lemme}


\begin{lemme}[Décomposition d'Iwasawa] \label{rap2}
$$G= K \exp( \mathfrak{a}) U$$ 
Plus précisément, l'application $K \times \exp( \mathfrak{a}) \times U \rightarrow G,\, (k, a, u) \mapsto kau$ est un homémorphisme.  
\end{lemme}

On définit la \emph{variété drapeau} $\mathscr{P}:=G/P_{c}$ où $P_{c}:= P\cap G_{c}$. Elle jouera un rôle clef dans toute la suite du texte. Le groupe $G$ agit sur  $\mathscr{P}$ par translation à gauche et l'action du sous groupe $K$ est transitive. Notons $\xi_{0}:=P_{c}/P_{c}$ le drapeau standard. On définit sur $\PP$ un cocycle continu $\sigma : G \times \PP \rightarrow \mathfrak{a}$  appelé \emph{cocycle d'Iwasawa} : soit $g \in G$, on note $\sigma(g, \xi_{0})$ l'unique élément de $\mathfrak{a}$ tel que $g\in K \exp(\sigma(g, \xi_{0})) U$; si $\xi \in \PP$ est quelconque, on écrit $\xi = kP_{c}$ pour un $k \in K$ et on pose $\sigma(g, \xi)= \sigma(gk, \xi_{0})$. 

\bigskip
Le groupe des composantes connexes $F:= G/G_{c}$ a plusieurs actions naturelles. Il agit à droite sur la variété drapeau $\PP:= G/P_{c}$ à travers l'identification $F\equiv P/P_{c}$. Il agit aussi  (à gauche) sur l'espace de Cartan $\ag$ de la fa\c con suivante : si $f \in F$ est représenté par $m \in M$, alors l'action de $f$ sur $\ag$ est donnée par la représentation adjointe $\text{Ad} m$ restreinte à $\ag$. Cette action est bien définie d'après les propriétés de $M$ et $M_{c}$ énoncées plus haut. Par ailleurs, le cocycle d'Iwasawa $\sigma : G \times \PP \rightarrow \ag$ est équivariant à droite pour ces actions au sens où pour $g \in G, \xi \in \PP, f \in F$, on a $\sigma(g, \xi f)= f^{-1}\sigma(g,\xi)$. 

\bigskip

Rappelons quelques éléments de théorie du plus haut poids pour une représentation de $G_{c}$ . Soit $V$ un $\R$-espace vectoriel de dimension finie, $\rho : G_{c} \rightarrow SL(V)$ une représentation algébrique irréductible de $G_{c}$. Alors l'action de l'algèbre de Lie $\mathfrak{a}$ induite sur $V$ est diagonalisable. On note $V = \oplus_{i \in I} V_{i}$ une décomposition de $V$ en sous-espaces propres. Chaque $V_{i}$ correspond à une forme linéaire $\alpha_{i} \in \mathfrak{a}^\star$. Cet ensemble de formes linéaires admet un (unique) plus grand élément $\omega  \in \mathfrak{a}^\star$  au sens où pour tout $i \in I$, $\omega - \alpha_{i}$ est somme de racines dans la base $\Pi$. On appelle $\omega$ le \emph{plus haut poids} de la représentation $\rho$. Le sous-espace propre associé est $V^U:=\{v \in V, \,\, \rho(U)v=v\}$. Sa dimension est appelée \emph{dimension proximale} de $\rho(G_{c})$. On dit que la représentation $\rho$ est \emph{proximale} si sa dimension proximale est égale à $1$.

Une représentation $ (V, \rho)$ de $G_{c}$ peut se ``prolonger'' en une représentation de $G$ appelée \emph{représentation induite}. Elle est construite ainsi : on note 
$$W := \{\varphi : G \rightarrow V, \,\,\varphi(\,. \;g_{c})= \rho(g_{c})^{-1}\varphi \}$$
$\R$-espace vectoriel de dimension $|G/G_{c}| .\dim V $, et pour $g \in G$, $\varphi \in W$, on pose $\rho_{W}(g) \varphi := \varphi(g^{-1}. )$. On peut décomposer $W$ en copies de $V$ indexées par l'ensemble des composantes connexes $F:= G/G_{c}$ de $G$. Pour cela on pose pour $f \in F$, $V^f:= \{\varphi \in W, \,\varphi=0 \text{ en dehors de $f$}\}$ et on remarque que $W = \bigoplus_{f \in F} V^f$. Notons $G \rightarrow F, g \mapsto f_{g}$ le morphisme naturel. On identifie $V^{f_{e}}\equiv V$ via l'isomorphisme $\varphi \mapsto \varphi(e)$. Pour tout $g \in G$,  $\rho_{W}(g) V = V^{f_{g}}$, et si $g \in G_{c}$, alors $\rho_{W}(g) : V \rightarrow V$ coïncide avec $\rho(g)$.

Le lemme suivant permet de voir toute représentation de $G$ à travers une représentation induite par $G_{c}$.
\begin{lemme} \label{redind}
Soit $(V,\rho)$ une représentation de $G$, $(W, \rho_{W})= \emph{Ind}^G_{G_{c}}(V, \rho)$ la représentation induite par $\rho_{|G_{c}}$. Il existe une (unique) application linéaire $G$-équivariante $\Psi : W \rightarrow V$  telle que $\Psi_{|V}= Id_{V}$. 
\end{lemme}

Nous allons maintenant voir comment la projection de Cartan et le cocycle d'Iwasawa s'interprètent en termes de représentations. Donnons nous $(V, \rho)$ une représentation algébrique irréductible de $G_{c}$, on note $\omega \in \ag^\star$ son plus haut poids, $r \in \N$ la dimension proximale de $\rho(G_{c})$  et $(W,\rho_{W})= \text{Ind}^G_{G_{c}}(V, \rho)$ la représentation induite à $G$. On munit $W$ d'un \emph{``bon'' produit scalaire} $\langle..\rangle$ où ``bon'' signifie que $\rho_{W}(K)$ préserve  $\langle..\rangle$ (noté plus tard $\rho_{W}(K)\subseteq O(W)$), que $\rho_{W}(A)$ est constitué d'éléments auto-adjoints (noté plus tard $\rho_{W}(A)\subseteq \text{Sym}(W)$), et que les sous-espaces $(V^f)_{f \in F}$ sont orthogonaux entre eux. On introduit aussi une application $G$-équivariante de $\PP$ vers la variété Grassmanienne $\amalg_{f \in F} \mathbb{G}_{r}(V^f)$ donnée par :
$$\PP \rightarrow \amalg_{f \in F} \mathbb{G}_{r}(V^f), \xi= g\xi_{0} \mapsto V_{\xi} :=\rho_{W}(g)V^U$$
On a alors le dictionnaire suivant :

\begin{lemme} \label{rep}
Soit $g \in G$, $\xi \in \PP$
\begin{itemize}
\item $\omega(\kappa(g)) = \log (||\rho_{W}(g)_{|V}||)$
\item $\omega(\sigma(g, \xi)) = \log \frac{||\rho_{W}(g)w ||}{||w||}$ pour $w \in V_{\xi}$
\end{itemize}
\end{lemme}

La combinaison du \cref{redind} et  du \cref{rep} donnent une interprétation de la projection de Cartan et du cocycle d'Iwasawa en termes de représentations algébriques fortement irréductibles de $G$. Plus explicitement, soit $(V, \rho)$ une telle représentation, $\omega \in \ag^\star$ et $r\geq 0$ comme précédemment. On peut munir $V$ d'un produit scalaire $\rho(K)$-invariant tel que $\rho(A)\subseteq \text{Sym}(V)$ (cf.  \cite{BQRW}, lemma $8.9$).  Pour $\xi=g\xi_{0} \in \PP$, on pose $V_{\xi}:= gV^U$. Alors l'application $\PP\rightarrow \mathbb{G}_{r}(V), \,\xi \mapsto V_{\xi}$ et le plus haut poids $\omega \in \ag^\star$ sont $F$-invariants et   
$\omega(\kappa(g)) = \log (||\rho(g)||)$ et  $\omega(\sigma(g, \xi)) = \log \frac{||\rho(g)v ||}{||v||}$ pour $v \in V_{\xi}$.

\bigskip

Les paragraphes précédents montrent comment les objets algébriques tels que $\PP$, $\ag$, $\kappa$, $\sigma$ fournissent des informations intrinsèques qui décrivent simultanément toutes les représentations fortement irréductibles de $G$. Réciproquement, ces objets peuvent être caractérisés par un nombre fini de réprésentations irréductibles proximales de $G_{c}$. 

\begin{lemme}
Il existe une famille finie $(\rho_{\alpha}, V_{\alpha})_{\alpha \in \Pi}$ de représentations algébriques  proximales irréductibles de $G_{c}$ telle que :

\begin{itemize}
\item l'application produit
$$\PP \rightarrow \amalg_{f \in F} \Pi_{\alpha \in \Pi} \mathbb{P}(V^f_{\alpha}), \,\,\xi \mapsto (V_{\alpha, \xi})_{\alpha \in \Pi} $$
est un plongement $G$-équivariant. 
\item les plus haut poids $(\omega_{\alpha})_{\alpha \in \Pi}$ de ces représentations forment une base de $\ag^\star$
\end{itemize}
\end{lemme}

Dans la suite on fixe une telle famille $(\rho_{\alpha}, V_{\alpha})_{\alpha \in \Pi}$. On munit chaque représentation induite $W_{\alpha}$ d'un ``bon'' produit scalaire. On a alors une distance $d^f_{\alpha}$ sur $\Pbb(V^f_{\alpha})$ donnée par $d^f_{\alpha}(\R v, \R w):= \frac{||v \wedge w||}{||v|| ||w||}$ pour tout $v, w \in V^f_{\alpha}-\{0\}$. Etant donné $f \in F$, on munit $\Pi_{\alpha \in \Pi} \mathbb{P}(V^f_{\alpha})$ de la distance produit $d((\R v_{\alpha}), (\R w_{\alpha})) := \sum_{\alpha \in \Pi} d^f_{\alpha}(\R v_{\alpha}, \R w_{\alpha})$. Cela induit une distance sur  $\PP$ (telle que deux points appartenant à des composantes connexes différentes sont par convention à distance infinie).

\bigskip

Terminons les rappels par la dynamique des marches aléatoires sur la variété drapeau (détails dans \cite{BQRW}, $10.1$). Soit  $\mu \in \mathcal{P}(G)$  une probabilité sur $G$ dont le support engendre un sous semi-groupe Zariski-dense de $G$. Notons $\mu_{G_{c}} \in \mathcal{P}(G_{c})$ la loi du premier retour à $G_{c}$ pour la marche à droite sur $G$. Plus formellement, on pose $B= G^{\N^\star}$, $\beta := \mu^{\otimes \N^\star}$, $\tau_{G_{c}} : B\rightarrow \N\cup \{\infty\}, b \mapsto \inf\{n \geq 1, \, b_{1}\dots b_{n}\in G_{c}\}$ le temps de retour à $G_{c}$ (fini $\beta$-ps), et on définit $\mu_{G_{c}}$ comme étant la loi de $ b_{1}\dots b_{\tau_{G_{c}}(b)}$ quand $b$ varie suivant $\beta$.   

La composante connexe $\PP_{c}:= G_{c}/P_{c}$ de la variété drapeau admet une unique probabilité $\mu_{G_{c}}$-stationnaire $\nu_{\PP_{c}} \in \mathcal{P}(\PP_{c})$. Notons $\beta_{G_{c}}:= \mu_{G_{c}}^{\otimes \N^\star} \in \mathcal{P}(B)$. La probabilité $\nu_{\PP_{c}}$ est proximale au sens où ses mesures limites $(\nu_{\PP_{c},b})_{b \in B}$ sont $\beta_{G_{c}}$-presque sûrement des masses de Dirac. Comme l'application de retour $s :  (B, \beta) \rightarrow (B, \beta_{G_{c}}), b \mapsto (b_{1}\dots b_{\tau_{G_{c}}(b)},  b_{\tau_{G_{c}}(b)+1}\dots b_{\tau_{G_{c}}(T^{\tau_{G_{c}}(b)}b)}, \dots) $ respecte les mesures, on dispose d'une famille mesurable de drapeaux $(\xi_{b})_{b \in B} \in \PP_{c}^B$ $\beta$-ps bien définie  telle que pour $\beta$-presque tout $b \in B$,  $\nu_{\PP_{c},s(b)}= \delta_{\xi_{b}}$.

La variété drapeau $\PP$ admet une unique probabilité $\mu$-stationnaire que l'on désignera par $\nu_{\PP} \in \mathcal{P}(\PP)$. Via l'identification $\PP\equiv \PP_{c}\times F$ donnée par l'action de $F$ sur $\PP$, cette probabilité s'écrit $\nu_{\PP}=  \nu_{\PP_{c}} \otimes df$ où $df$ est la probabilité uniforme sur $F$. En particulier, $\nu_{\PP}$ est $F$-invariante à droite,  et pour $\beta$-presque tout $b \in B$, la mesure limite $\nu_{\PP, b}= \frac{1}{|F|} \sum_{f \in F} \delta_{\xi_{b},f}$ où $\xi_{b,f}:= \xi_{b}.f$. On dit que $\nu_{\PP}$ est $\mu$-proximale au dessus de $F$. 

On note $\sigma_{\mu} := \int_{G \times \PP}\sigma(g,\xi) \,\,d\mu(g)d\nu_{\PP}(\xi) \in \ag$ le \emph{vecteur de Lyapunov} associé. Il est $F$-invariant. Etant donné représentation algébrique fortement irréductible $\rho : G \rightarrow GL(V)$, de plus haut poids  $\omega \in \ag^\star$ (par rapport à $G_{c}$), on note $\lambda_{\mu}:= \omega(\sigma_{\mu}) \in \R$ le \emph{premier exposant de Lyapunov} de la $\mu$-marche sur $V$. Si l'image $\rho(G) \subseteq GL(V)$ n'est pas bornée, alors on a $\lambda_{\mu}>0$ (\cite{BQRW}, $4.7$)



\bigskip

\subsection{Réduction au théorème de dérive exponentielle} \label{REDred}

On réduit la preuve du \cref{THnonat} à un théorème de dérive exponentielle sur un système dynamique fibré. On explique aussi l'intuition derrière une telle démarche.
\bigskip

Revenons au cadre du \cref{THnonat}. Le groupe $G:= \widebar{\Gamma}^Z \subseteq SL_{d}(\R)$ est fortement irréductible et contient $\Gamma\subseteq SL_{d}(\Z)$ comme semi-groupe Zariski-dense. Il est donc  semi-simple (cf \cite{BQI}, lemme $8.5$). On peut ainsi appliquer la sous-section précédente à $G$ et se donner un sous groupe compact maximal $K$, un sous espace de Cartan $\mathfrak{a}$, un système de racines $\Sigma \subseteq \mathfrak{a}^\star$, etc. fixés une fois pour toute dans la suite. Par ailleurs, $G$ est inclus dans $SL_{d}(\R)$ donc admet une représentation canonique sur $V=\R^d$ (donnée par l'identité). Le plus haut poids de $G_{c}$ sera noté $\omega \in \ag^\star$, sa dimension proximale sera notée $r \in \llbracket 1, d-1 \rrbracket$ ($r< d$ car $G$ n'est pas borné).  On munit $V$ d'un produit scalaire $\langle.,. \rangle$ tel que $K \subseteq O(V)$, $A \subseteq \text{Sym}(V)$.

On reprend les notations $\PP, \PP_{c}, (\xi_{b})_{b \in B} \in \PP_{c}^B$ ainsi que l'application ($F$-invariante) $\PP \rightarrow  \mathbb{G}_{r}(V), \xi= g\xi_{0} \mapsto V_{\xi} :=gV^U$ de la sous-section précédente et on pose $V_{b}:= V_{\xi_{b}}$, appelé \emph{$r$-plan limite} pour la trajectoire $b \in B$. Il est prouvé dans $\cite{BQRW}$ (section $10.2$) que pour $\beta$-presque tout $b \in B$, toute valeur d'adhérence de la suite de matrices $(\frac{b_{1}\dots b_{n}}{||b_{1}\dots b_{n}||})_{n \geq 1} \in M_{d}(\R)^{\N^\star}$ a pour image $V_{b}$.

\bigskip

L'idée fondamentale motivant la preuve du \cref{THnonat}  est de montrer que les mesures limites $\nu_{b}$ de $\nu \in \mathcal{M}^{Rad}(\T^d \times \R)$ sont invariantes par translation de la coordonnée  $\T^d$ suivant les $r$-plans limites $V_{b}$. Un tel résultat permettrait de conclure. En effet, la forte irréductibilité de $G$ assure que les projections de ces $r$-plans dans $\T^d$ sont presque sûrement denses, et ainsi que les mesures $\nu_{b}$, puis $\nu$, sont invariantes par translation de la coordonnée $\T^d$. On termine alors la preuve en appliquant le deuxième point \cref{premcor}.

On cherche donc à prouver l'invariance des $\nu_{b}$ selon les $r$-plans  $V_{b}$. On note  $X := \T^d \times \R$ et on introduit le système dynamique fibré $B^X := B \times X$, $T^X : B^X \rightarrow B^X, (b,x) \mapsto (Tb, b_{1}^{-1}x)$ muni de la mesure de Radon invariante $\beta^X:= \int_{B} \delta_{b} \otimes \nu_{b} \,d\beta(b)$. Donnons nous de plus une famille mesurable $(e_{b})_{b \in B} \in (\R^d)^r$ telle que pour $\beta$-presque tout $b \in B$, $e_{b}= (e_{b,1}, \dots, e_{b, r})$ est une base orthonormée de $V_{b}$.  On peut définir sur $B^ X$ un flot à $r$-paramètres $(\phi_{t})_{t \in \R^r}$ via la formule $\phi_{t} (b,x) := (b, x+\sum_{i=1}^r t_{i}e_{b,i})$, la translation concernant la coordonnée sur $\T^d$. Montrer l'invariance des $\nu_{b}$ suivant les $r$-plans $V_{b}$ revient alors à montrer que la mesure $\beta^X$ est $\phi_{t}$ invariante. Une première difficulté est que le flot $\phi_{t}$ ne commute pas avec la transformation $T^X$. On rajoute alors des paramètres au système pour contourner cette obstruction.

Notons $\sigma : G \times \PP \rightarrow \mathfrak{a}$ le cocycle d'Iwasawa, $\ag^F := \{x \in \ag, \,F.x=x\}$ le sous-espace $F$-invariant de $\ag$, $\sigma_{F}:= \frac{1}{|F|} \Sigma_{f \in F} f.\sigma$ le cocycle projeté de $\sigma$ sur $\ag^F$,  et  $\theta : B \rightarrow \mathfrak{a}^F, b \mapsto \sigma_{F}(b_{1}, \xi_{Tb})$ (où $T$ désigne le shift sur $B = \Gamma^{\N^\star})$. Pour $\beta$-presque tout $b \in B$, on a la relation d'équivariance $b_{1}V_{Tb} = V_{b}$, et quelque soit $v \in V_{Tb}$, l'égalité $||b_{1}v|| = e^{\omega(\theta(b))} ||v||$ (cf. \cref{rep}). Il existe donc une matrice orthogonale $O(b)\in O_{r}(\R)$ tel que
 \begin{align*}
 b_{1} e_{Tb}= e^{\omega(\theta(b))} O(b)e_{b} \tag{$\ast$}
\end{align*}
On définit un système dynamique fibré au dessus du shift en posant :
 \begin{itemize}
 
 \item $B^+ = B \times \ag^F \times O_{r}(\R) \times X$
 \item $T^+ : B^+ \rightarrow B^+, (b,z,O, x) \mapsto (Tb, z- \theta(b), OO(b)^{-1} , b^{-1}_{1}x)$.
 \item $\beta^+ := \int_{B} \delta_{(b,z,O)} \otimes \nu_{b} \,\, d\beta(b)d\text{leb}_{\ag^F}(z) dh(O)\,\,\,\,$ 
\end{itemize}
où $\text{leb}_{\ag^F}$  et $h$ sont des mesures de Haar  sur $\ag$ et $O_{r}(\R)$, fixées une fois pour toute. 

\bigskip

 On  munit $B^+$ d'un flot $(\phi_{t})_{t \in \R^r}$ respectant les fibres défini par  $\phi_{t} : B^+ \rightarrow B^+, (b,z, O, x) \mapsto (b,z,O, x + \langle t,  e^{\omega(z)} O e_{b}\rangle')$, où $\langle.,.\rangle' : \R^r \times (\R^d)^r \rightarrow \R^d, (t,v) \mapsto \sum_{i=1}^rt_{i}v_{i}$ et la translation concerne la coordonnée sur $\T^d$ de $X= \T^d \times \R$.

\bigskip

\begin{lemme}\label{ref5}
\begin{itemize}
\item $T^+$ préserve la mesure $\beta^+$.
\item Pour $\beta$-presque $b \in B$, pour tout $(z,O,x,t) \in \ag^F\times O_{r}(\R) \times X\times \R$, $$T^+\circ \phi_{t}(b,z,O,x)= \phi_{t}\circ T^+ (b,z,O,x)$$

\end{itemize}
\end{lemme}

\bigskip

\begin{proof}
Vérifions le premier point. Soit $\varphi : B^+\rightarrow[0,+\infty]$ mesurable. 
\begin{align*}
T^+_{\star}\beta^+(\varphi)&=\int_{B^+}\varphi(Tb, z- \theta(b), OO(b)^{-1} , b^{-1}_{1}x)  \,\,d\nu_{b}(x)d\beta(b)d\text{leb}_{\ag^F}(z) dh(O)\\
&=\int_{B^+}\varphi(Tb, z, O , b^{-1}_{1}x)  \,\,d\nu_{b}(x)d\beta(b)d\text{leb}_{\ag^F}(z) dh(O)\\
&=\int_{B^+}\varphi(b, z, O , x)  \,\,d\nu_{b}(x)d\beta(b)d\text{leb}_{\ag^F}(z) dh(O)\\
&=\beta^+(\varphi)
\end{align*}
en utilisant l'invariance par translation à droite des mesures de Haar, et l'équivariance des $\nu_{b}$. Ainsi, $T^+_{\star}\beta^+=\beta^+$.

Vérifions le second point. On se donne $b \in B$ tel que   $b_{1} e_{Tb}= e^{\omega(\theta(b))} O(b)e_{b}$ (vérifié $\beta$-ps.).  Soit $(z,O,x,t) \in \ag^F\times O_{r}(\R) \times X\times \R$. Alors 
\begin{align*}
&T^+\circ \phi_{t}(b,z,O,x)= (Tb, z-\theta(b), OO(b)^{-1},b^{-1}_{1}(x + \langle t,  e^{\omega(z)} O e_{b}\rangle'))\\
& \phi_{t}\circ T^+(b,z,O,x) =(Tb, z-\theta(b), OO(b)^{-1},b^{-1}_{1}x + \langle t,  e^{\omega(z-\theta(b))} OO(b)^{-1} e_{Tb}\rangle')
\end{align*}
Il suffit donc de vérifier que $ b^{-1}_{1}e^{\omega(z)} O e_{b}=e^{\omega(z-\theta(b))} OO(b)^{-1} e_{Tb}$ ce qui est une réécriture de $(\ast)$ ci dessus. 
\end{proof}

\bigskip

L'invariance des $\nu_{b}$ suivant les $r$-plans $V_{b}$ revient à montrer que $\beta^+$ est invariante sous le flot $(\phi_{t})_{t \in \R^r}$. L'idée est alors de désintégrer $\beta^+$ le long des orbites de $\phi_{t}$. Dans  le cas où l'espace des orbites muni de la tribu quotient est un espace borélien standard, cette désintégration est une désintégration au sens classique (par rapport à l'application de projection $B^+ \rightarrow \phi_{t}\backslash B^+$) et attribue à chaque $\phi_{t}$-orbite une mesure de Radon. Notons que pour tout $c \in B^+$, on peut alors relever la mesure sur son orbite $\{\phi_{t}(c), t \in \R^r\}$ via l'homéomorphisme local : $\R^r \rightarrow \{\phi_{t}(c), t \in \R^r\}, t \mapsto \phi_{t}(c)$. On obtient finalement une famille de mesures  $\sigma : B^+ \rightarrow \mathcal{M}^{Rad}(\R^r)$ qui vérifie la propriété d'équivariance : $\sigma (\phi_{t}c)= \delta_{t}^\star \sigma(c)$  où $\delta_{t}$ désigne la translation de pas $t $ sur $\R^r$.

\bigskip

Le problème de cette approche est que l'espace des orbites n'est pas standard en général. Notons $\mathcal{M}_{1}^{Rad}(\R^r) = \mathcal{M}^{Rad}(\R^r)/\sim$ l'ensemble des mesures de Radon (non nulles) sur $\R^r$ quotienté par la relation d'équivalence $m \sim m'$ s'il existe $\lambda >0$ tel que $m= \lambda m'$. Un élément de $\mathcal{M}_{1}^{Rad}(\R^r)$ sera appelé $\emph{classe  projective de mesures}$. L'action par translation de $\R^r$ sur $\mathcal{M}^{Rad}(\R^r)$ respecte la relation d'équivalence $\sim$, donc induit une action sur $\mathcal{M}_{1}^{Rad}(\R^r)$. 
La théorie générale donne un sens à une désintégration de $\beta^+$ le long du flot  $(\phi_{t})_{t \in \R^r}$ (voir \cite{BQI} ou \cite{Duf}), et conduit à une application mesurable $\sigma : B^+ \rightarrow \mathcal{M}_{1}^{Rad}(\R^r)$ ainsi qu'à une partie  $E \subseteq B^+$  de mesure pleine telle que pour tout $c \in E$, $t \in \R^r$ tel que $\phi_{t}c  \in E$, on a $\sigma(\phi_{t}c)= \delta_{t}^{\star} \sigma(c)$. On a de plus que 

\begin{lemme}\label{ref6}
 La mesure de Radon $\beta^+$ est invariante par le flot $(\phi_{t})_{t\in \R^r}$ si et seulement si  l'application $\sigma$ est $\beta^+$-pp  égale à la classe projective $\R_{>0}\leb$.
\end{lemme}

\begin{proof}
On réfère au chapitre $2$ de la thèse de L. Dufloux \cite{Duf} pour une introduction détaillée aux mesures conditionnelles le long d'une action de groupe à stabilisateurs discrets. Ce dernier lemme est une application du théorème $2.1.2.14$.
\end{proof}

On est donc ramené à montrer que $\sigma : B^+ \rightarrow \mathcal{M}_{1}^{Rad}(\R^r)$ est $\beta^+$-pp  égale à la classe projective de Lebesgue. Le point clef est de montrer que pour presque tout point $c \in B^+$, la classe projective $\sigma(c) \in \mathcal{M}^{Rad}_{1}(\R^r)$ est invariante par translation suivant une droite de $\R^r$ (attention, il s'agit de l'invariance d'un ensemble  de mesures, mais pas forcément des mesures individuellement). Dans le cas où $r=1$, on peut alors conclure directement (en utilisant (\cite{BQI}, proposition $4.3$) et en remarquant que le flot $\phi_{t}$ stabilise une suite exhaustive de compacts de $B^+$ pour se ramener à une mesure finie). Le cas général nécessitera quelques efforts supplémentaires (voir la dernière section).

Pour $n \geq 0$, on note $\mathcal{Q}^+_{n} = (T^+)^{-n}(\mathcal{B}^+)$ la tribu des $n$-fibres de $T^+$. C'est une famille décroissante de sous tribus de $\mathcal{B}^f$. Leur intersection $\mathcal{Q}^+_{\infty} :=\cap_{n \geq 0} \mathcal{Q}^+_{n}$ est appelée la \emph{tribu queue} du système dynamique $(B^+, T^+)$. La tribu queue d'un système dynamique fournit beaucoup d'informations sur celui ci, par exemple sa trivialité par rapport à une mesure invariante implique que cette mesure est ergodique. 

\begin{lemme}\label{ref7}
Quitte à modifier $\sigma$ sur un ensemble $\beta^+$-négligeable, on peut supposer $\sigma \circ T^+= \sigma$ (en tout point).  L'application  $\sigma$  est alors $\mathcal{Q}^+_{\infty}$-mesurable.
\end{lemme}

\begin{proof}
On commence par montrer que $\sigma \circ T^+= \sigma$ $\beta^+$-pp. Cela provient essentiellement du fait que $T^+$ commute avec le flot $(\phi_{t})_{t\in \R^r}$.

Fixons $(b , z, O) \in B \times \ag^F \times O_{r}(\R)$ un point $\beta\otimes\leb_{\ag^F}\otimes h$-typique. On entend par là que ce qui suit est valable pour presque tout choix de $(b,z,O)$.  On note $E:= \{(b,z,O)\} \times X$, $E' := \{(Tb,z-\theta(b),OO(b)^{-1})\} \times X$ munis naturellement des mesures $\nu_{E}:=\delta_{(b,z,O)}\otimes \nu_{b}$ et $\nu_{E'}:=\delta_{(Tb,z-\theta(b),OO(b)^{-1})} \otimes \nu_{Tb}$. Remarquons que $E$ et $E'$ sont tous deux stables par le flot $\phi_{t}$ et que d'après le \cref{ref5} l'application $T^+_{|E}: E \rightarrow E'$ est une bijection bimesurable qui respecte les mesures et commute à l'action du flot. Soit $\sigma_{E}: E \rightarrow \mathcal{M}^{Rad}_{1}(\R^r)$, $\sigma_{E'}: E' \rightarrow \mathcal{M}^{Rad}_{1}(\R^r)$ les mesures conditionnelles associées aux flots sur $E$ et $E'$. On a alors que $\sigma_{E'} \circ T_{|E}^+= \sigma_{E}$ $\nu_{E}$-pp. Par ailleurs, on a que $\sigma_{E}= \sigma_{|E}$ $\nu_{E}$-pp et $\sigma_{E'}= \sigma_{|E'}$ $\nu_{E'}$-pp. Finalement, pour presque tout $(b, z, O)$ et $\nu_{b}$-presque tout $x \in X$, on a $\sigma \circ T^+(b,z, O, x)= \sigma (b, z, O, x)$ d'où l'égalité $\beta^+$-presque partout.

Pour conclure, on se donne $D \subseteq B^+$ plein tel que $\sigma \circ T^+_{|D}= \sigma_{|D}$. On pose $D'= \cup_{n \geq 0}\cap_{p \geq n} (T^+)^{-p}D$. C'est un sous ensemble plein de $B^+$ tel que $(T^+)^{-1}D'= D'$. On se donne une apllication $\sigma' : B^+ \rightarrow \mathcal{M}_{1}^{Rad}(\R)$ telle que $\sigma'$ est constante sur le complémentaire de $D'$ et $\sigma'(c) = \lim_{n} \sigma \circ (T^+)^n(c)$ pour $c \in D'$ (cette limite est définie car la suite est stationnaire). L'application $\sigma'$ coïncide avec $\sigma$ sur $D$ donc presque partout et est $T^+$-invariante.
\end{proof}

\bigskip

Le théorème de dérive exponentielle déduit du lemme précédent l'invariance de presque toute classe projective $\sigma(c) \in  \mathcal{M}_{1}^{Rad}(\R^r)$ suivant une droite de $\R^r$. Il est du à Benoist-Quint (\cite{BQII}) dans le cas des espaces homogènes de volume fini. Sa démonstration occupera toute la prochaine section. 

\begin{th*}[Dérive exponentielle]
Soit $Y$ un espace mesurable standard, $\sigma : B^+ \rightarrow Y$ une application $\mathcal{Q}^+_{\infty}$-mesurable, $E \subseteq B^+$ une partie $\beta^+$-pleine. 
Alors pour presque tout $c \in E$, pour tout $\varepsilon>0$, il existe $c', c'' \in E$, $t \in ]-\varepsilon, \varepsilon[^r$ non nul,  tels que $c''= \phi_{t}(c')$ et $\sigma(c)= \sigma(c')=\sigma(c'')$. 
\end{th*}

En appliquant la dérive à la mesure conditionnelle $\sigma : B^+ \rightarrow \mathcal{M}_{1}^{Rad}(\R^r)$, on obtient : 
\begin{cor.} \label{corderiv}
Pour $\beta$-presque tout $c \in B^+$, l'ensemble $$\{t \in \R^r, \,\, \text{$\sigma(c) \in \mathcal{M}_{1}^{Rad}(\R^r)$ est $t$-invariante}\}$$ contient une droite.
\end{cor.}

\begin{proof}[Démonstration du corollaire]
D'après le dernier lemme, on a peut appliquer le théorème de dérive à la mesure conditionnelle $\sigma : B^+ \rightarrow \mathcal{M}_{1}^{Rad}(\R)$ et à une partie $E \subseteq B^+$ telle que pour tout $c \in E$, $t \in \R$ tel que $\phi_{t}(c)  \in E$ on a $\sigma(\phi_{t}(c))= \delta_{t}^{\star} \sigma(c)$.  Notons $E' \subseteq E$ l'ensemble des $c \in E$ tels que la conclusion du théorème de dérive est vérifiée.  Pour $c \in E'$, l'ensemble $\{t \in \R^r, \,\, \text{$\sigma(c) \in \mathcal{M}_{1}^{Rad}(\R^r)$ est $t$-invariante}\}$ est un sous-groupe fermé de $\R^r$ qui n'est pas discret. Il contient donc forcément une droite, d'où le corollaire.
\end{proof}

On en déduira le \cref{THnonat}  dans la dernière section.

\bigskip

\subsection{Stratégie pour la dérive exponentielle}

Les prochaines sections sont consacrées à la preuve de la dérive exponentielle énoncée en fin de section $4$. 

La \emph{stratégie de la preuve} est la suivante. \label{structurederive}
On se donne $W \subseteq B^+$ une partie de mesure finie, vue comme une \emph{fenêtre}. On montre le résultat pour presque tout $c \in W \cap E$. On fixe $K \subseteq W \cap E$ une partie compacte de mesure presque pleine dans $W$ et sur laquelle l'application $\sigma$ est continue (possible via le théorème de Lusin), ainsi qu'un point $c=(b, z, O, x) \in K$ ($\beta^+$-typique). Via le théorème de non-dégénérescence des mesures limites (section $5$), on peut construire une suite d'approximations $(c_{p})_{p\geq 0} \in K^\N$ du point $c$ de la forme $c_{p}=(b,z, O, x+u_{p})$ où le terme de dérive $u_{p} \in \R^d$, agit sur la coordonnée en $\T^d$, est bien positionné par rapport à $x$, et tend vers $0$ quand $p \rightarrow \infty$. Appelons $n$-fibre passant par $c$ l'ensemble $F_{n,c}:=\{c' \in B^+, (T^+)^n(c')=(T^+)^n(c)\}$.  Pour tout $p \geq 0$, on peut choisir des entiers $n_{p} \geq 0$ et des éléments $h_{p, c} \in F_{n_{p}, c}$, $h_{p,c_{p}} \in F_{n_{p}, c_{p}}$  dans les $n_{p}$-fibres passant par $c$ et $c_{p}$ tels que (quitte à extraire):
\begin{itemize}
\item $h_{p,c}, h_{p,c_{p}}  \in K$ (section $6.2$, équirépartition des morceaux fibres)
\item $h_{p,c_{p}} = h_{p,c} + D_{p}$ où le terme de dérive $D_{p} \in \R^d$ agit sur la coordonnée en $\T^d$ et se comporte bien du point de vue de la norme et de la direction (section $8.6$, contrôle de la dérive).
\end{itemize}

On peut de plus supposer que les suites $(h_{p,c})$ et $(h_{p,c_{p}})$ convergent dans $K$ vers des points limites $c', c'' \in K$. 

Alors $\sigma(c)=\sigma(c')=\sigma(c'')$. En effet, la continuité de $\sigma$ sur $K$ entraine que $\sigma(c')= \lim_{p \to \infty} \sigma(h_{p,c})$. Comme l'application $\sigma$ est $\mathcal{Q}^+_{\infty}$-mesurable, elle est constante sur les $n$-fibres passant par un point, donc $\sigma(h_{p,c})=\sigma(c)$ puis $\sigma(c')= \sigma(c)$. De même avec $c''$. 

De plus $c''= \phi_{t}c'$ où le paramètre $t$ est arbitrairement petit (quitte à bien choisir les $h_{p,c}$, $h_{p,c_{p}}$), grâce au bon comportement du terme de dérive $D_{p}$, ce qui conclut.

\newpage

\section{Non dégénérescence des mesures limites}

Les notations sont celles de la section $4$.  On rappelle que $\mu$ est une probabilité sur $SL_{d}(\Z)$ à support fini engendrant un semi-groupe $\Gamma \subseteq SL_{d}(\Z)$ fortement irréductible, que $\chi : \Gamma \rightarrow \R$ est un morphisme de semi-groupes tel que la probabilité image $\chi_{\star} \mu$ est centrée. Ces données induisent une action de $\Gamma$ sur $X:=\T^d \times \R$ via la formule $g.(x,t)=(gx, t + \chi(g))$, puis une marche aléatoire de probabilités de transitions données par $(\mu \star \delta_{x})_{x \in X}$. On s'est donné une mesure de Radon stationnaire ergodique $\nu \in \mathcal{M}^{Rad}(X)$ \emph{dont la projection sur $\T^d$ est sans atome}. On note par ailleurs $B:= \Gamma^{\N^\star}$, $\beta:= \mu^{\otimes \N^\star}$ et pour $\beta$-presque tout $b \in B$, $\nu_{b}:= \lim_{n \to \infty} (b_{1}\dots b_{n})_{\star} \nu \in \mathcal{M}^{Rad}(X)$ (cf. section $2$ sur les mesures limites). 

Le but de cette section est de montrer que les mesures limites $(\nu_{b})_{b \in B}$ se projettent sur $\T^d$ en des mesures sans atome. Autrement dit : 

\begin{th.}[Non dégénérescence]\label{THnondeg}
Pour $\beta$-presque tout $b \in B$,  on a $$\forall x\in \T^d,\,\,\,\,\,\, \nu_{b}(\{x\} \times \R)=0$$
\end{th.}

Signalons tout de suite le corollaire utilisé dans la preuve de la dérive exponentielle. Pour $b \in B$, $x \in \T^d$, on note :
$$W_{b}(x):= \{y \in \T^d, \,\lim_{n \to \infty}d(b^{-1}_{n}\dots b^{-1}_{1}x, b^{-1}_{n}\dots b^{-1}_{1}y) =0\}$$
où la distance considérée est la distance euclidienne standard sur le tore.

\begin{cor}\label{cornondeg}
Pour $\beta$-presque tout $b \in B$, pour tout $x \in \T^d$, on a  $$\nu_{b}(W_{b}(x) \times \R) =0$$.
\end{cor}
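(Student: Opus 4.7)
Je propose de raisonner par l'absurde et d'obtenir une contradiction avec le Théorème \ref{THnondeg} via un argument de contraction le long de la dynamique rétrograde. Supposons que, pour un $b \in B$ $\beta$-typique, il existe $x \in \T^d$ tel que $\nu_b(W_b(x) \times \R) > 0$. Par régularité intérieure, on fixe un compact $K \subseteq W_b(x)$ et un intervalle borné $[-R,R]$ tels que $c := \nu_b(K \times [-R,R]) > 0$. L'objectif est de ramener $K$ à un point par itération rétrograde tout en conservant la masse $c$.

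\emph{Contraction uniforme et transfert.} Par définition de $W_b(x)$, pour tout $y \in K$ on a $d(b_n^{-1}\cdots b_1^{-1}y, b_n^{-1}\cdots b_1^{-1}x) \to 0$. Via un argument de type Dini appliqué à la suite décroissante de fonctions semi-continues supérieurement $y \mapsto \sup_{n\geq N} d(b_n^{-1}\cdots b_1^{-1}y, b_n^{-1}\cdots b_1^{-1}x)$ sur le compact $K$, cette convergence devient uniforme sur $K$~: pour tout $\epsilon > 0$, il existe $N=N(\epsilon)$ tel que $(b_n^{-1}\cdots b_1^{-1})(K) \subseteq B_\epsilon(x_n)$ pour tout $n\geq N$, où $x_n := b_n^{-1}\cdots b_1^{-1}x$. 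La relation d'équivariance $(b_1\cdots b_n)_\star \nu_{T^n b} = \nu_b$, qui induit une translation de pas $S_n := \chi(b_1\cdots b_n)$ sur la coordonnée réelle, donne alors pour tout $n \geq N(\epsilon)$ :
\[
c \;\leq\; \nu_{T^n b}\bigl(B_\epsilon(x_n) \times [-R-S_n, R-S_n]\bigr).
\]

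\emph{Passage à la limite faible.} La récurrence de la marche centrée $\chi_\star \mu$ sur $\R$ (\cref{ref0}) permet d'extraire une sous-suite $(n_k)$ le long de laquelle $S_{n_k}$ reste dans un compact. Le \cref{torenu_{b}} garantit $\nu_{T^{n_k} b}(\T^d \times J) = \nu(\T^d \times J)$ pour tout intervalle borné $J$, ce qui fournit la tension uniforme de la famille $(\nu_{T^{n_k}b})$ sur $\T^d\times \R$. Quitte à extraire à nouveau, on peut supposer $\nu_{T^{n_k} b} \to \nu_\infty$ faiblement sur les compacts, ainsi que $x_{n_k} \to x_\infty$ et $S_{n_k} \to s_\infty$. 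Un choix $\epsilon_k \to 0$ avec $N(\epsilon_k) \leq n_k$, combiné au théorème de Portmanteau, fournit alors
\[
\nu_\infty\bigl(\{x_\infty\} \times [-R-s_\infty, R-s_\infty]\bigr) \;\geq\; c \;>\; 0.
\]

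\emph{Obstacle principal.} Il restera à contredire cette inégalité via la non-atomicité. La difficulté est que le Théorème \ref{THnondeg} n'assure cette propriété que pour les mesures limites $\nu_{b'}$ associées à un $b' \in B$ dans un ensemble $\beta$-plein, et non pour la limite faible $\nu_\infty$ a priori. Il s'agira donc d'identifier $\nu_\infty$ comme une telle mesure limite, par exemple en exploitant que la famille mesurable $b \mapsto \nu_b$ jouit d'une structure de martingale à valeurs mesures pour la filtration naturelle sur $B$ (et admet des « reparamétrisations » par des temps d'arrêt, comme dans le \cref{LIMcor}), ce qui permettrait de voir $\nu_\infty$ comme une mesure $\nu_{b'}$ pour un $b'$ reconstruit à partir des queues $(T^{n_k} b)_k$. À défaut, il faudrait établir directement que la non-atomicité passe à la limite faible le long des temps où $S_n$ demeure contrôlé, en exploitant la forte irréductibilité de $\Gamma$ et la structure géométrique des $r$-plans limites $V_{T^n b}$.
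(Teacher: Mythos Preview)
Your approach is genuinely different from the paper's, and it has a real gap that you yourself flag but do not resolve.

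\medskip

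\textbf{The paper's argument.} The paper does not argue by contradiction via weak limits. Instead it proceeds directly: first it observes that for \emph{fixed} $x\neq y$ in $\T^d$, for $\beta$-almost every $b$ one has $y\notin W_b(x)$ (the backward $\check\mu$-walk on $\T^d$ starting from $x-y\neq 0$ almost surely does not converge to $0$). Combined with the non-atomicity of $\nu_{b,I}$ (so that $\nu_{b,I}^{\otimes 2}$ gives zero mass to the diagonal), a Fubini argument yields $\nu_{b,I}^{\otimes 2}\{(x,y):y\in W_b(x)\}=0$, i.e.\ $\nu_{b,I}(W_b(x))=0$ for $\nu_{b,I}$-a.e.\ $x$. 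The upgrade to \emph{all} $x$ then comes for free from the fact that the $W_b(\cdot)$ are equivalence classes: if some $W_b(x_0)$ had positive mass, then every $y\in W_b(x_0)$ would satisfy $W_b(y)=W_b(x_0)$, contradicting the almost-everywhere statement. No weak limit of $\nu_{T^n b}$ is ever taken.

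\medskip

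\textbf{Your Dini step.} A minor point first: your functions $\phi_N(y)=\sup_{n\geq N} d(b_n^{-1}\cdots b_1^{-1}y,\,b_n^{-1}\cdots b_1^{-1}x)$ are suprema of continuous functions, hence \emph{lower} semi-continuous, not upper semi-continuous. Dini's theorem for decreasing sequences requires upper semi-continuity, so the argument as written does not apply. This is easily repaired: Egoroff's theorem applied to $d_n(\cdot)\to 0$ on $(K,\nu_{b,I}|_K)$ gives a measurable $K'\subseteq K$ with $\nu_{b,I}(K')>c/2$ on which the convergence is uniform, and one may take $K'$ compact by inner regularity.

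\medskip

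\textbf{The main gap.} The serious problem is the one you isolate under ``Obstacle principal''. Having produced a weak subsequential limit $\nu_\infty$ of $(\nu_{T^{n_k}b})_k$ with an atom on the torus factor, you need $\nu_\infty$ to be one of the $\nu_{b'}$, or at least to inherit non-atomicity. Neither follows. The map $b'\mapsto \nu_{b'}$ is only $\beta$-a.e.\ defined and has no reason to be continuous, so a weak limit along the deterministic sequence $T^{n_k}b$ need not lie in its essential range. Your proposed fixes are not convincing: the martingale/stopping-time structure of $(\nu_b)$ (as in the \cref{LIMcor}) concerns averaging over \emph{random} instructions, not convergence along a single shift orbit; and ``passing non-atomicity to the weak limit'' is exactly what fails in general (a sequence of diffuse probabilities can converge weakly to a Dirac mass). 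Even trying to exploit Birkhoff equidistribution of $(T^n b)$ does not help, because the times $n_k$ at which $S_{n_k}$ stays bounded form a set of zero density (by the CLT for the centered walk $\chi_\star\mu$), so you cannot force $T^{n_k}b$ to stay in the good set $\{b':\nu_{b'}\text{ has no }\T^d\text{-atom of mass}\geq c\}$.

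In short, the contraction-and-weak-limit strategy does not close, and the paper's route --- the pointwise instability statement for fixed $x\neq y$, then Fubini, then the equivalence-class upgrade --- bypasses this obstacle entirely.
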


\begin{proof}[Preuve du corollaire]
Soit $x, y \in \T^d$, $x \neq y$. Pour $\beta$-presque tout $b \in B$, la suite $(b^{-1}_{n}\dots b^{-1}_{1}(x-y))_{n \geq 0}$ ne tend pas vers $0$ dans $\T^d$, i.e. $y \notin W_{b}(x)$. Soit $I \subseteq \R$ un intervalle borné. D'après le \cref{THnondeg}, pour $\beta$-presque tout $b \in B$, la mesure $\nu_{b,I}:=\nu_{b}(.\times I) \in \mathcal{M}^f(\T^d)$ est sans atome, donc  $\nu_{b,I}^{\otimes 2}$ ne charge pas la diagonale $\Delta_{\T^d} \subseteq \T^d \times \T^d$. On en déduit via le théorème de Fubini que pour $\beta$-presque tout $b \in B$, $$\nu_{b,I}^{\otimes 2}((x,y) \in \T^d\times \T^d, \, y \in W_{b}(x) )= 0$$
autrement dit que 
$$\int_{B} \nu_{b,I} (W_{b}(x))  d\nu_{b,I}(x) =0$$
Ainsi, pour $\nu_{b,I}$-presque tout $x \in \T^d$, on a $\nu_{b,I} (W_{b}(x)) =0$. C'est en fait vrai pour tout $x$. Sinon, il existerait $x \in \T^d$ tel que $\nu_{b,I} (W_{b}(x))>0$. On aurait alors pour tout $y \in W_{b}(x)$ que $\nu_{b,I} (W_{b}(y))>0$, ce qui contredit la première assertion. 
On a finalement montré que pour tout intervalle $I \subseteq \R$ borné, pour $\beta$-presque tout $b \in B$, pour tout $x \in \T^d$, on a  $\nu_{b}(W_{b}(x) \times I)=0$. En considérant des intervalles $I$ arbitrairement grands, on obtient le résultat.
\end{proof}

\bigskip

\subsection{Cas où $\chi(\Gamma) \subseteq \R$ est discret.}

L'image $\chi(\Gamma)$ est alors un sous-groupe de $\R$ (cf. \cref{ref0}). Le cas où $\chi(\Gamma)=\{0\}$ provient des travaux de Benoist-Quint $\cite{BQI}$. On peut donc supposer $\chi(\Gamma)$ discret non trivial, et même $\chi(\Gamma)=\Z$. Comme la mesure $\nu \in \mathcal{M}^{Rad}(X)$ est $\mu$-stationnaire ergodique sur $X= \T^d \times \R$, son support est inclus dans un sous ensemble de la forme $\T^d \times (r+\Z)$ où $r\in \R$. Quitte à considérer un translaté de $\nu$, on peut supposer que $\text{supp}\,\nu \subseteq \T^d \times \Z$. On est ainsi ramené à considérer une marche sur $\T^d \times \Z$. L'objectif est de montrer que les mesures limites $\nu_{b} \in \mathcal{M}^{Rad}(\T^d \times \Z)$ n'ont pas d'atome. Il suffit de montrer que les mesures limites restreintes au bloc de base $\nu_{b |\T^d \times \{0\}} \in \mathcal{M}^{Rad}(\T^d)$ n'ont pas d'atome (le cas des autres blocs s'en déduisant par translation). La stratégie est alors de voir les $\nu_{b |\T^d \times \{0\}} $ comme des mesures limites pour une marche induite sur le tore $\T^d$. Le point clef est de montrer que cette marche est récurrente hors de zéro. Un argument tiré de Benoist-Quint $\cite{BQI}$ implique alors la non dégénérescence cherchée. 

Construisons la marche induite. On définit un ``temps de retour au bloc de départ'' $\tau : B \rightarrow \N\cup \{\infty\} : b \mapsto \inf\{n \geq 1, \,\,\chi(b_{n} \dots b_{1})=0\}$ et on note $\mu_{\tau} \in \mathcal{P}(\Gamma)$ la loi de $b_{\tau(b)}\dots b_{1}$ quand $b$ varie suivant $\beta$. Notons que c'est aussi la loi de $b_{1}\dots b_{\tau(b)}$, donc que la mesure $\nu_{|\T^d\times \{0\}}$ est $\mu_{\tau}$-stationnaire d'après le \cref{LIMcor}. On pose $\beta_{\tau} :=\mu_{\tau}^{\otimes \N^\star} \in \mathcal{P}(B)$. 

La proposition suivante affirme que le point $0 \in \T^d$ est positivement instable pour la marche induite par $\mu_{\tau}$ sur $\T^d$ et permet de conclure. On note $P_{\mu_{\tau}}$ l'opérateur de Markov associé à $\mu_{\tau}$, défini sur les fonctions mesurables non négatives $f : \T^d \rightarrow [0, \infty]$ par $P_{\mu_{\tau}} f(x) = \int_{G} f(gx)\,d \mu_{\tau}(g)$. On dit qu'une fonction $u : \T^d-\{0\} \rightarrow [0, +\infty[$ est \emph{propre} si elle est continue et si l'image inverse de tout compact est  compacte. 

\begin{prop}\label{pref1}
Il existe une fonction propre $u : \T^d-\{0\} \rightarrow [0, +\infty[$ et des constantes $a \in [0,1[$, $C \in [0, +\infty[  $ telles que $$ P_{\mu_{\tau}}u \leq a u +C$$
\end{prop}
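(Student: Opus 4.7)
The plan is to adapt the Lyapunov function construction of Benoist--Quint \cite{BQI}, taking $u_\delta(x) := d(x, 0)^{-\delta}$ on $\T^d - \{0\}$ for a small $\delta > 0$ to be determined, which is clearly proper. The difficulty specific to the present setting is that the induced probability $\mu_\tau$ lacks a first moment: because $\chi_\star \mu$ is centered and finitely supported, the return time $\tau$ of the centered walk on $\Z$ to $0$ has the classical polynomial tail $\Pbb(\tau \geq n) \asymp n^{-1/2}$, so $\E{\log \|g\|}$ is infinite under $\mu_\tau$ and the drift inequality of \cite{BQI} for $\mu$ cannot be transferred to $\mu_\tau$ by naive iteration. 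One must instead exploit the finite support of $\mu$ itself through a conditional analysis on the value of $\tau$.

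The core technical estimate I would establish is
\begin{equation*}
\E_\beta\bigl[\|b_\tau\cdots b_1 v\|^{-\delta}\bigr] \leq a < 1 \qquad \text{for all unit } v \in \R^d,
\end{equation*}
for $\delta$ chosen small enough. This uses the Le Page large deviation theorem for the $\mu$-walk on $\R^d$: since $\Gamma$ is strongly irreducible with positive top Lyapunov exponent $\lambda > 0$, there exist $\epsilon, c > 0$ (and $C$) such that $\Pbb(\log \|b_n \cdots b_1 v\| \leq (\lambda - \epsilon)n) \leq Ce^{-cn}$ uniformly in $v$. Conditioning on $\{\tau = n\}$ and splitting into this large deviation event and its complement, the main contribution is
\begin{equation*}
\sum_{n \geq 1} e^{-\delta(\lambda - \epsilon) n}\, \Pbb(\tau = n) \;=\; \E\bigl[e^{-\delta(\lambda - \epsilon)\tau}\bigr] \;\leq\; e^{-\delta(\lambda - \epsilon)} \;<\; 1,
\end{equation*}
using $\tau \geq 1$ almost surely; the contribution from the large deviation events is geometrically small provided $\delta$ is chosen with $\delta \log M < c$, where $M := \max_{g \in \supp \mu} \|g^{-1}\|$, since on that event the crude bound $\|b_n \cdots b_1 v\|^{-\delta} \leq M^{\delta n}$ is killed by the factor $Ce^{-cn}$.

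To deduce the proposition, I would decompose $P_{\mu_\tau} u_\delta(x) = \E_\beta[d(b_\tau\cdots b_1 x, 0)^{-\delta}]$ by passing to the minimal lift $\tilde x \in \R^d$ of $x$ (with $\|\tilde x\| = d(x, 0)$) and splitting according to whether the image $g \tilde x$, for $g = b_\tau\cdots b_1 \in SL_d(\Z)$, is still closer to $0$ than to any other lattice point. On this ``small'' event, $d(gx, 0) = \|g\tilde x\| = \|gv\|\cdot d(x,0)$ with $v = \tilde x/\|\tilde x\|$, so $u_\delta(gx) \leq \|gv\|^{-\delta} u_\delta(x)$, and the core estimate controls this part by $a u_\delta(x)$. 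On the complementary ``large'' event, $u_\delta(gx)$ is bounded in terms of distances to other lattice points; integrability of $d(\cdot, 0)^{-\delta}$ on $\T^d$ for $\delta < d$ together with a uniform bound on the density of the distribution of $gx$ caps this contribution by an additive constant. A mild modification of $u_\delta$ (subtracting a large constant and truncating below by $0$, or using a weighted sum $\sum_{k \in \Z^d} d(x,k)^{-\delta}$ restricted near $0$) ensures the multiplicative constant remains strictly less than $1$.

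The main obstacle is exactly the first step: proving $\E_\beta[\|b_\tau\cdots b_1 v\|^{-\delta}] < 1$ for a fixed $\delta > 0$, uniformly in $v$. The absence of a first moment for $\mu_\tau$ forbids the classical \cite{BQI} drift computation from being applied to the induced walk, and the sub-exponential decay of $\Pbb(\tau = n)$ means one must genuinely use the exponential control of the matrix cocycle under $\mu$ at each fixed $n$ via Le Page, and then sum against $\Pbb(\tau = n)$ rather than argue directly at the level of the induced Markov operator $P_{\mu_\tau}$.
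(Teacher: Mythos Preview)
There is a real gap in your core estimate. You split $\E_\beta[\|b_\tau\cdots b_1 v\|^{-\delta}]$ into the good event and the large-deviation event, bounding the former by $\E[e^{-\delta(\lambda-\epsilon)\tau}]\le e^{-\delta(\lambda-\epsilon)}$ and the latter by $\sum_{n\ge 1} C\,e^{(\delta\log M - c)n}$. But as $\delta\to 0$ the first bound tends to $1$ while the second tends to the fixed positive constant $Ce^{-c}/(1-e^{-c})$; there is no value of $\delta$ for which the sum is visibly below $1$. The paper resolves precisely this point by proving the drift inequality for $P_{\mu_\tau}^k$ (equivalently for $\mu_{\tau^k}$) with $k$ large: then $\tau^k\ge k$ forces the main term below $e^{-\delta(\lambda-\epsilon)k}$ and the error sum starts only at $n=k$, so both pieces become small simultaneously. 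Your proposal misses this iteration, and the fix you suggest at the end (subtracting a constant from $u_\delta$, or truncating) only affects the additive constant $C$, not the multiplicative one.

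Your second decomposition is also problematic. On your ``large'' event (where $g\tilde x$ is no longer the nearest lift of $gx$ to $0$), the value $u_\delta(gx)=d(gx,0)^{-\delta}$ can be arbitrarily large: nothing prevents $g\tilde x$ from landing extremely close to some nonzero lattice point. Your justification appeals to a ``uniform bound on the density of the distribution of $gx$'', but for $g\in SL_d(\Z)$ this distribution is purely atomic and has no density. The paper does not split on the final position at all; it instead tracks the last time $T^x_-\le\tau^k$ at which the $\mu$-trajectory visits the complement $R=\T^d\setminus B(0,\varepsilon)$, and on $\{T^x_-\ge 1\}$ bounds $u(b_{\tau^k}\cdots b_1 x)\le \varepsilon^{-\delta}e^{\delta M(\tau^k-T^x_-)}$. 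The additive constant then comes from showing that $\tau^k-T^x_-$ has a uniform exponential moment, which is the genuinely delicate step (mixing the polynomial tail of the return time on $\Z$ with large deviations for the matrix walk). Your outline does not contain this ingredient.
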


\bigskip

Voyons comment en déduire le théorème de non dégénérescence.
\begin{proof}[\Cref{pref1} $\implies$ \cref{THnondeg} :]

Notons $\nu_{0}:= \nu_{|\T^d \times\{0\}} \in \mathcal{M}^f(\T^d)$. C'est une mesure finie $\mu_{\tau}$-stationnaire sur $\T^d$, on peut donc s'en donner une décomposition en mesures limites $(\nu_{0,a})_{a \in B}$ (par rapport à la marche $\mu_{\tau}$). Vérifions qu'elles coïncident avec les $\nu_{b|\T^d\times \{0\}}$. On introduit  les différents temps de retour à $\chi=0$ en posant

\begin{equation*}
\left\{
\begin{aligned}
&\tau^0=0 \\
&\tau^{k+1} : B \rightarrow \N \cup \{\infty\}, b \mapsto \inf\{n > \tau^k(b), \,\chi(b_{n}\dots b_{1})=0\} \,\,\,  \end{aligned} \right.
\end{equation*}

 En particulier $\tau^1=\tau$. On pose alors  $\rho : B \rightarrow B, b \mapsto  (b_{\tau^k(b)+1}\dots b_{\tau^{k+1}(b)})_{k \geq 0}$. L'application $\rho$ vérifie $\rho_{\star} \beta = \beta_{\tau}$ et pour $\beta$-presque tout $b \in B$, on a $\nu_{b |\T^d \times \{0\}}= [\lim (b_{1}\dots b_{n})_{\star} \nu]_{|\T^d \times \{0\}}=[\lim (\rho(b)_{1}\dots \rho(b)_{n})_{\star} \nu]_{|\T^d \times \{0\}}= \lim (\rho(b)_{1}\dots \rho(b)_{n})_{\star} [\nu_{|\T^d \times \{0\}}]= \nu_{0, \rho(b)}$. On est donc ramené à montrer que les mesures limites de $\nu_{0}$ sont sans atome.    

Soit $u, a, C$ comme dans la proposition précédente et posons $$v : \T^d \times \T^d -\Delta_{\T^d} \rightarrow \R_{+}, (x,y) \mapsto u(x-y)$$ L'application $v$ est propre et vérifie $P_{\mu_{\tau}} v \leq a v +C$ (ce qui s'interprète comme un phénomène de récurrence hors de la diagonale pour la marche induite par $\mu_{\tau}$ sur $\T^d\times \T^d$). Le lemme $3.11$ de $\cite{BQI}$ affirme alors que pour $\beta_{\tau}$-presque tout $a \in B$, la probabilité $\nu_{0, a}$ n'a pas d'atome, ce qui conclut. 
\end{proof}

\bigskip

On va maintenant prouver la \cref{pref1}. La difficulté est que la probabilité $\mu_{\tau}$ n'a pas de moment d'ordre $1$. On ne peut donc pas appliquer les résultats de Benoist-Quint qui concernent les marches à moment exponentiel. 

\begin{lemme}\label{ref8}
La probabilité $\mu_{\tau}$ n'a pas de moment d'ordre $1$.
\end{lemme}

\begin{proof}
Il s'agit de prouver que $\int_{G} |\log(||g||)| d\mu_{\tau}(g)= +\infty$. On a la minoration 
$$\int_{G} |\log(||g||)| d\mu_{\tau}(g) \geq  \int_{\{\tau \geq k\}} |\log(||b_{\tau(b)}\dots b_{1}||) | d\beta(b)$$ 
 Or $\beta \{\tau \geq k\} \geq c k^{-1/2}$   pour un certain $c >0$ (voir par exemple \cite{LLRW},$(5.1.1)$). Par ailleurs,  d'après le principe des grandes déviations (\cite{BQRW}, $(13.6)$), il  existe $\alpha >0$ tel que pour $k$ assez grand,  on a $$\beta \{b \in B,  \forall n \geq k, \,||b_{n}\dots b_{1}|| \geq e^{\frac{n}{2}\lambda_{\mu} } \} \geq 1- e^{-\alpha k} $$ (où $\lambda_{ \mu} >0$ désigne l'exposant de Lyapunov associé à $\mu$, voir ($4.1$)). On en déduit que $\int_{G} |\log(||g||)| d\mu_{\tau}(g) \geq (ck^{-1/2}- e^{-\alpha k})\frac{k}{2}\lambda_{\mu}  \rightarrow  +\infty$ quand $k \to \infty$. D'où le résultat. 
\end{proof}

\bigskip

Cette difficulté est surmontée en considérant la $\mu_{\tau}$-marche comme une sous marche de la $\mu$-marche sur $\T^d$, qui elle est à moment exponentiel fini donc plus facile à contrôler (notamment grâce au principe des grandes déviations).

\bigskip

\subsection*{Preuve de la \cref{pref1} }

Dans toute cette preuve, on munira $\R^d$ de la distance euclidienne standard, et $\T^d=\R^d/\Z^d$ de la distance quotient. Plus précisément, en notant $\pi : \R^d \rightarrow \T^d$ la projection canonique, on pose pour $x,y \in \T^d$, $$d(x,y):= \inf\{d(\widetilde{x}, \widetilde{y}),\,\widetilde{x}\in \pi^{-1}(x), \widetilde{y}\in \pi^{-1}(y))\}$$

On va montrer le résultat suivant. 

\begin{prop5.3bis*}\label{pref1bis}
Fixons $\delta >0$ et notons $u$ la fonction $u : \mathbb{T}^d-\{0\} \rightarrow ]0,+\infty[, x \mapsto d(x, 0)^{-\delta}$. 
On peut choisir le paramètre $\delta$ pour qu'il existe des constantes $a \in [0,1[$, $C \in [0, +\infty[$, $k \geq 1$  telles que 
$$P^k_{\mu_{\tau}}u \leq a u + C $$

\end{prop5.3bis*}

\begin{rem*}
$1)$ La \cref{pref1} s'en déduit en considérant la fonction propre $\sum_{i=0}^{k-1} \alpha^i P^i_{\mu}u$ où $\alpha \in ]1, (\frac{1}{a})^{\frac{1}{k-1}}[$.

$2)$ $\mu^{\star k}_{\tau} = \mu_{\tau^k}$ où $\tau^k : B \rightarrow \N \cup \{\infty\}$ est le temps de $k$-ième atteinte de $\chi=0$. Ainsi, $P^k_{\mu_{\tau}}$ est  l'opérateur de Markov associé à la marche $\mu_{\tau_{k}}$ sur $\T^d$.

\end{rem*}

\begin{proof}
On fixe des paramètres $\delta>0$, $k\geq 1$ que l'on ajustera à la fin. On partitionne le tore $\T^d$ en deux régions : une petite boule ouverte centrée en $0$ notée $D := B(0, \varepsilon)$ et son complémentaire noté $R:= X - D$. L'hypothèse que la boule est petite signifiera $\varepsilon < \frac{1}{4}e^{-2M}$ où $M := \text{log}\sup \{||g||,||g^1||, g \in \text{supp}\,\mu\}$. 

Fixons $x \in \T^d-\{0\}$. On note $T^x_{1} := \inf\{n >0, \,\,b_{n}\dots b_{1}x \in R\}$,  $T^x_{2} := \inf\{n > T^x_{1}, \,\,b_{n}\dots b_{1}x \in R\}$, etc. les temps de passage dans $R$, et $T^x_{-} := \max\{T^x_{i}, \,\, T^x_{i} \leq \tau^k\}$ le dernier temps de passage dans $R$ avant de rencontrer $\chi =0$ pour la $k$-ième fois.

On écrit 
\begin{align*}
(P^k_{\mu_{\tau}} u)(x) &= \int_{B }u(b_{\tau^k(b)}\dots b_{1}x) \,\,d\beta(b)\\
				&= \int_{\{T^x_{-} =0\}}u(b_{\tau^k(b)}\dots b_{1}x) \,\,d\beta(b)\, +\, \int_{\{T^x_{-} \geq 1\}} u(b_{\tau^k(b)}\dots b_{1}x) \,\,d\beta(b)
\end{align*}			

\bigskip

Cette décomposition distingue les cas où la trajectoire $(b_{n}\dots b_{1}x)_{n \geq 0}$ sort ou non du disque $D$ avant l'instant $\tau^k(b)$. On va majorer chacune des intégrales en montrant :

\bigskip

$1)$ Si $\delta>0$ est assez petit, alors il existe une constante $C \in [0, +\infty[$ tel que pour tout $x \in \T^d-\{0\}$, on a $\int_{\{T^x_{-} \geq 1\}} u(b_{\tau^k(b)}\dots b_{1}x) \,\,d\beta(b)  \leq C$.

\bigskip

$2)$ Si $\delta>0$ est assez petit, et $k\geq1$ assez grand, alors il existe $a \in [0,1[$ tel que pour tout $x \in \T^d-\{0\}$, on a $\int_{\{T^x_{-} =0\}}u(b_{\tau^k(b)}\dots b_{1}x) \,\,d\beta(b)\, \leq au(x)$. 
	
\bigskip
La proposition s'en déduit immédiatement. Comme la preuve à venir est un peu technique, on explique d'abord la démarche mise en jeu :

\bigskip
\emph{Idée pour  $1)$ }: Dans ce cas, la $\mu$-marche sur $\T^d$ issue de $x$ passe par $R$ avant la $k$-ième annulation de $\chi$. Or le principe des grandes déviations implique que le temps de retour à $R$ de la $\mu$-marche sur $\T^d-\{0\}$ a un moment exponentiel. On s'attend donc à ce que la trajectoire issue de $x$ revienne souvent dans $R$ puis que l'instant de $k$-ième annulation de $\chi$ se fasse à proximité de $R$. La difficulté est que l'on ne sait pas quelle excursion hors de $R$ verra $\chi$ s'annuler pour la $k$-ième fois, autrement dit pour quelle valeur de $i \geq 1$, on a $T^x_{i}\leq \tau^k < T^x_{i+1}$. Cela oblige à sommer sur tous les cas possibles. Pour obtenir une majoration, on distingue alors suivant la valeur de $\chi$ à l'instant $T^x_{i}$.    

\emph{Idée pour  $2)$} : Dans ce cas, la $\mu$-marche sur $\T^d$ issue de $x$ reste dans la boule $D$ jusqu'à l'instant $\tau^k$. On peut donc voir cette marche comme une marche sur $\R^d-\{0\}$ sans perte d'information. On applique alors le principe des grandes déviations. 
\bigskip

\subsubsection*{Preuve du point $1)$}	

Le paramètre $k\geq 1$ ne sera utile pour le point $2)$. On conseille donc pour une première lecture de supposer $k=1$. 
	
En observant qu'un $\mu$-pas rapproche au plus de $0$ par un facteur $e^{-M}$, on obtient que :
$$\int_{\{T^x_{-} \geq 1\}} u(b_{\tau^k(b)}\dots b_{1}x) \,\,d\beta(b)  \leq \int_{\{T^x_{-}\geq 1\}} \varepsilon^{-\delta} e^{\delta M(\tau^k- T^x_{-})}\,\, d\beta$$

Il suffit donc de montrer que la variable $\tau^k- T^x_{-}$ a un moment exponentiel uniformément borné en $x \in X-\{0\}$ au sens où  :

\begin{lemme}\label{ref9}
Il existe des constantes $C_{1}>0$,  $\alpha_{1}>0$ telles que pour tout $x \in \T^d-\{0\}$, $n \geq 1$ on a :
$$\beta(T^x_{-}\geq 1, \,\,\tau^k -T^x_{-} \geq n) \leq C_{1}e^{-\alpha_{1}n}  $$
\end{lemme}

Dans cette preuve, on sera amené à considérer les variables aléatoires $S_{n} := B \rightarrow \Z, b \mapsto \chi(b_{n}\dots b_{1})$. Elles constituent une chaîne de Markov sur $\Z$ de probabilités de transition induites par $\mu_{\Z}:= \chi_{\star}\mu \in \mathcal{P}(\Z)$.  Rappelons que $\chi_{\star}\mu$ est  à support borné et d'espérance nulle, et que $S_{n}$ est en particulier récurrente. On se donne $l \geq 1$ tel que $\text{supp} \chi_{\star} \mu \subseteq [-l,l]$.

Soit $n \geq 1$, en notant $p_{j} = \lfloor{\text{log}^2j}\rfloor$ on a :
\begin{align*}
\beta(T^x_{-}\geq 1, \,\,\tau^k -T^x_{-} \geq n)&= \sum_{i \geq 1} \beta(T^x_{-} = T^x_{i}, \tau^k-T^x_{i} \geq n)\\
&= \sum_{i \geq 1} \beta(\tau^k > T^x_{i}, \,\, T^x_{i+1}-T^x_{i} > \tau^k -T^x_{i}  \geq n)\\
&= \sum_{i,j \geq 1} \beta(T^x_{i}=j, \,\tau^k >j, |S_{j}| >p_{j}, \,\, T^x_{i+1}-T^x_{i} > \tau^k -T^x_{i}  \geq n) \\&\,\,\,\,\,\,\,\,+\sum_{i,j \geq 1} \beta(T^x_{i}=j, \,\tau^k >j, |S_{j}| \leq p_{j}, \,\, T^x_{i+1}-T^x_{i} > \tau^k -T^x_{i}  \geq n) \\
&\leq \sum_{i,j \geq 1} \beta(T^x_{i}=j, \,\tau^k >j, |S_{j}| >p_{j}, T^x_{i+1}- T^x_{i} > \frac{\frac{p_{j}}{l}+n}{2} ) \\&\,\,\,\,\,\,\,\,+\sum_{i,j \geq 1} \beta(T^x_{i}=j, \,\tau^k >j, |S_{j}| \leq p_{j}, T^x_{i+1}- T^x_{i} \geq n) \\
&\leq \sum_{i,j \geq 1} \beta(T^x_{i}=j,\, T^x_{i+1}- T^x_{i} > \frac{\frac{p_{j}}{l}+n}{2} ) \\&\,\,\,\,\,\,\,\,+\sum_{i,j \geq 1} \beta(T^x_{i}=j, \,\tau^k >j, |S_{j}| \leq p_{j}, T^x_{i+1}- T^x_{i} \geq n) 
\end{align*}

où la première des deux inégalités s'obtient en remarquant que si $S_{T^x_{-}} > p>0$, alors $\tau^k- T^x_{-} > p/l$.

Nous allons maintenant majorer ces deux sommes, que l'on note respectivement $\Sigma_{1}$ et $\Sigma_{2}$ :
\begin{align*}
&\Sigma_{1}:=\sum_{i,j \geq 1} \beta(T^x_{i}=j,\, T^x_{i+1}- T^x_{i} > \frac{\frac{p_{j}}{l}+n}{2} )\\
&\Sigma_{2}:=\sum_{i,j \geq 1} \beta(T^x_{i}=j, \,\tau^k >j, |S_{j}| \leq p_{j}, T^x_{i+1}- T^x_{i} \geq n) 
\end{align*}

Nous aurons besoin du lemme suivant qui découle directement du principe des grandes déviations :

\begin{lemme}\label{ref10}
Il existe des constantes $C_{2}>0$, $\alpha_{2}>0$ telles que pour tout $y \in R$, $n \geq 1$, on a 
$$\beta(T^y_{1} \geq n) \leq C_{2} e^{-\alpha_{2}n} $$
\end{lemme}

 Admettons ce lemme provisoirement.  
 
\bigskip

\emph{Majoration de $\Sigma_{1}$}
\bigskip

Pour $i,j \geq 1$, $y\in R$, on a la majoration de probabilité conditionnelle suivante :
\begin{align*}
\beta(T^x_{i+1} -T^x_{i} > \frac{\frac{p_{j}}{l} +n}{2}\,\, |\,\,T^x_{i}=j, \, b_{j}\dots b_{1}x=y )
&= \beta(T^x_{i+1} -T^x_{i} > \frac{\frac{p_{j}}{l} +n}{2}\,\, |\,\,  b_{T^x_{i}}\dots b_{1}x=y )\\
&=\beta(T^y_{1} >  \frac{\frac{p_{j}}{l} +n}{2})\\
&\leq C_{2} e^{-\alpha_{2}\frac{{p_{j}} +ln}{2l}}
\end{align*}

Aini, on peut majorer $\Sigma_{1}$ via :
\begin{align*}
\Sigma_{1} &=\sum_{i,j \geq 1}\sum_{y \in R\cap \Gamma.x} \beta(T^x_{i}=j,\, T^x_{i+1}- T^x_{i} > \frac{\frac{p_{j}}{l}+n}{2},\, b_{j}\dots b_{1}x=y )\\
&\leq \sum_{i,j \geq 1}\sum_{y \in R\cap \Gamma.x} \beta(T^x_{i}=j, \, b_{j}\dots b_{1}x=y ) C_{2}e^{-\alpha_{2}\frac{{p_{j}} +ln}{2l}}\\
&= \sum_{i,j \geq 1} \beta(T^x_{i}=j) C_{2}e^{-\alpha_{2}\frac{{p_{j}} +ln}{2l}}\\ 
&\leq (C_{2} \sum_{j \geq 1}e^{-\alpha_{2}\frac{p_{j}}{2l}} )\,e^{-\alpha_{2}\frac{n}{2} }
\end{align*}
 où la somme $\sum_{j \geq 1}e^{-\alpha_{2}\frac{p_{j}}{2l}}$ est finie car  car $e^{-\alpha_{2}\frac{p_{j}}{2l}} = o(\frac{1}{j^2})$. Notons que la dépendance en $x$ a disparu.

\bigskip

\newpage

\emph{Majoration de $\Sigma_{2}$}

\bigskip

On raisonne de la même fa\c con que pour $\Sigma_{1}$ afin d'obtenir : 
\begin{align*}
\Sigma_{2} &\leq  \sum_{i,j \geq 1} \beta(T^x_{i}=j, \,\tau^k >j, |S_{j}|\leq p_{j}) C_{2}e^{-\alpha_{2}n}\\ 
&\leq C_{2} \sum_{j \geq 1} \beta( \tau^k >j, |S_{j}|\leq p_{j})\,\, e^{-\alpha_{2}n}
\end{align*}

Il reste à montrer que la somme $ \sum_{j \geq 1} \beta( \tau^k >j, |S_{j}|\leq p_{j})$ est finie. Cela découle des propriétés générales des marches aléatoires sur $\Z$ induites par une probabilité de transition centrée à support borné (voir plus loin). Notons par ailleurs que la dépendance en $x$ a disparu. 

\bigskip

Pour conclure, on obtient la majoration : 

$$\beta(T^x_{-}\geq 1, \,\,\tau^k -T^x_{-} \geq n) \leq (C_{2} \sum_{j \geq 1}e^{-\alpha_{2}\frac{p_{j}}{2l}} )\,e^{-\alpha_{2}\frac{n}{2} } + C_{2} \sum_{j \geq 1} \beta( \tau^k >j, |S_{j}|\leq p_{j})\,\, e^{-\alpha_{2}n}$$

ce qui prouve le \cref{ref9}, donc le point $1)$. 	
	
\end{proof}

\subsubsection*{Preuve du point $2)$}	

Si $d(x,0) > \varepsilon\, e^M$, alors le résultat est immédiat car alors $\{T^x_{-}=0\}$ est de mesure nulle. On peut donc supposer $d(x,0) < \varepsilon\, e^M < \frac{1}{4}e^{-M}$ où la deuxième inégalité provient du choix de $\varepsilon$. Procédons à la majoration :

\begin{align*}
\int_{\{T^x_{-} =0\}}u(b_{\tau^k(b)}\dots b_{1}x) \,\,d\beta(b) &= \sum_{n \geq k } \int_{\{T^x_{-} =0, \tau^k=n\}} u(b_{n}\dots b_{1}x) \,\,d\beta(b)\\
&\leq \sum_{n \geq k} \int_{B} ||b_{n}\dots b_{1}\widetilde{x}||^{-\delta} \,d\beta(b)
\end{align*} 

où $||.||$ dénote la norme euclidienne standard sur $\R^d$, où $\widetilde{x}\in \R^d-\{0\}$ désigne l'unique relevé de $x$ de norme $||\widetilde{x}||<\frac{1}{4}e^{-M}$, et où la deuxième inégalité provient du fait que si $T^x_{-}(b)=0$ alors $u(b_{\tau^k(b)}\dots b_{1}x )=||b_{\tau^k(b)}\dots b_{1}\widetilde{x}||^{-\delta}$. On poursuit la majoration grâce au principe des grandes déviations (\cite{BQRW}, $(13.6)$) qui implique le résultat suivant :

\begin{lemme}\label{ref11}
Pour $v \in \R^d-\{0\}$, $n\geq 1$, notons $E^n_{v} := \{b \in B, ||b_{n}\dots b_{1} v|| \geq e^{\frac{\lambda_{\mu}}{2}n}||v|| \}$. Alors il existe des constantes $C_{3} >0$, $\alpha_{3}>0$ telles que pour tout $v \in \R^d-\{0\}, n \geq 1$, on a: 
$$\beta(E^n_{v}) \geq 1- C_{3}e^{-\alpha_{3}n} $$
\end{lemme}

On décompose alors 
\begin{align*}
\int_{B} ||b_{n}\dots b_{1}\widetilde{x}||^{-\delta} \,d\beta(b) &=  \int_{E^n_{\widetilde{x}}} ||b_{n}\dots b_{1}\widetilde{x}||^{-\delta} \,d\beta(b) +  \int_{B-E^n_{\widetilde{x}}} ||b_{n}\dots b_{1}\widetilde{x}||^{-\delta} \,d\beta(b)\\
&\leq e^{-\frac{\delta \lambda_{\mu}}{2} n}||\widetilde{x}||^{-\delta} + C_{3} e^{-\alpha_{3}n} e^{\delta M n}||\widetilde{x}||^{-\delta} 
\end{align*}

On choisit alors $\delta$ tel que $0<\delta < \frac{\alpha_{3}}{M}$.

$$\int_{B} ||b_{n}\dots b_{1}\widetilde{x}||^{-\delta} \,d\beta(b)  \leq  (\sum_{n \geq k} e^{-\frac{\delta \lambda_{\mu}}{2} n}+C_{3} e^{-(\alpha_{3}-\delta M) n})\,u(x) $$
 
Le facteur devant $u(x)$ étant le reste d'une série convergente, il est strictement inférieur à $1$ pour $k$ assez grand. Cela conclut la preuve du point $2)$.

\subsubsection*{Quelques précisions}

$1)$ \emph{Preuve du \cref{ref10}} : on peut se limiter à considérer les points $y \in R$ tels que $\beta(T^y_{1} \geq 2)>0$.  Soit un tel $y \in R$. Nécessairement, $y \in B(0, \varepsilon e^M) \subseteq B(0, \frac{1}{4} e^{-M})$. Notons $\widetilde{y} \in \R^d$ son unique de relevé de norme $||\widetilde{y}|| \leq \varepsilon e^M$. Soit $n \geq 1$. Si $b \in B$ est tel que $T^y_{1}(b) \geq n$, alors on voit par récurrence que pour tout $k =1, \dots, n$, on a $||b_{k}\dots b_{1}\widetilde{y}|| < \varepsilon < ||\widetilde{y}||$. En particulier, $b \notin E^n_{\widetilde{y}}$ (défini dans le  \cref{ref11}). On a donc pour tout $n \geq 1$ l'inégalité  $\beta(T^y_{1} \geq n)\leq \beta(B-E^n_{\widetilde{y}}) \leq C_{3}e^{-\alpha_{3} n}$ avec $C_{3}, \alpha_{3}>0$ indépendants de $y$ ce qui conclut.

\bigskip

$2)$ 
A la fin de la majoration de $\Sigma_{2}$, dans le point $1)$ nous avons affirmé que  $\sum_{j \geq 1} \beta( \tau^k >j, |S_{j}|\leq p_{j}) >\infty$. Expliquons pourquoi. Comme c'est un problème de marche aléatoire général, on préfère revenir à un cadre plus abstrait. On se donne une probabilité $m \in \mathcal{P}(\Z)$ à support fini, centrée, et on note $(S_{n})_{n \in \Z}$ la chaîne de Markov sur $\Z$ de probabilités de transition $(m\star \delta_{k})_{k \in \Z}$. On fixe un entier $k \geq 1$, et on s'intéresse au temps de  $k$-ième passage en $0$, noté $\tau^k$. Pour $l \in \Z$, on notera $\mathbb{P}_{l}$ la loi de la chaîne $(S_{n})_{n \geq 0}$ issue du point $l$. 

On note $F_{j}$ l'évènement $F_{j} := \{\tau^k >j, |S_{j}|\leq p_{j}\}$ où  $p_{j} := \lfloor \text{log}^2j \rfloor$. On veut démontrer que
\begin{equation*}
\sum_{j \geq 1} \mathbb{P}_{0}(F_{j}) <\infty 
\end{equation*}


L'idée est que la réalisation de l'évènement $F_{j}$ contraint le temps $\tau^k$ à appartenir à une petite fenêtre de valeurs, puis de conclure en utilisant que $\tau^k$ a un moment d'ordre $1/4$. 

On fixe $a \in \N$ tel que les $X_{i}$ sont presque sûrement à valeurs dans l'intervalle $A:=\llbracket -a, a\rrbracket$, et on note $\tau_{A}:= \inf\{n >0,\, S_{n}\in A\}$ le temps de premier retour à  $A$. Nous nous appuierons sur le lemme suivant :

\begin{lemme} \label{LL}
Il existe une constante $c >0$ telle que pour tout $l \in \Z$, pour tout $r>0$, 
$$\mathbb{P}_{l}(\tau_{A} > r) \leq c\,\frac{|l|+1}{\sqrt{r}}$$
\end{lemme}

\begin{proof}
Notons $\tau_{\Z^-}, \tau_{\Z^+}$ les temps de premier retour aux demi-axes négatif $\Z^-:= \{l \in \Z, l\leq 0\}$ et positif $\Z^+=\N$. La référence \cite{LLRW} (Proposition $5.15$) affirme qu'il existe une constante $c_{-} >0$ telle que pour tout $l \geq 0$, pour tout $r>0$, 
$$\mathbb{P}_{l}(\tau_{\Z^{-}} > r) \leq c_{-}\,\frac{l+1}{\sqrt{r}}$$
Par symétrie, il existe une constante $c_{+} >0$ telle que pour tout $l \leq 0$, pour tout $r>0$, 
$$\mathbb{P}_{l}(\tau_{\Z^{+}} > r) \leq c_{+}\,\frac{|l|+1}{\sqrt{r}}$$
Finalement, en posant $c:= \max(c_{-}, c_{+})$ et en remarquant que tout changement de demi-axe impose de rencontrer $A$, on obtient le résultat. 
\end{proof}

Pour la suite, on se donne $\varepsilon \in ]0, \frac{1}{2a+1}[$ et $N \geq 0$ tels que pour tout $l \in A$, on a $\mathbb{P}_{l}(\tau^k\leq N)\geq 1- \varepsilon$. On commence par donner une première majoration de $\mathbb{P}_{0}(F_{j})$. Remarquons l'inégalité :

 $$\mathbb{P}_{0}(j < \tau^k \leq j+ j^{1/4} \,|\, F_{j})\,\, \geq\,\, \inf_{l \in \llbracket -p_{j}, p_{j} \rrbracket}\mathbb{P}_{l}((S_{n})_{n \leq j^{1/4}} \text{ rencontre $0$ au moins $k$ fois})$$

On vérifie que ce minorant tend vers $1$ quand $j \to +\infty$.  On a pour $j \geq 0$ assez grand, pour tout $l \in \llbracket -p_{j}, p_{j} \rrbracket$, 
\begin{align*}
\mathbb{P}_{l}(\tau^k \leq j^{\frac{1}{4}}) &\geq \mathbb{P}_{l}(\tau_{A} \leq \frac{1}{2}j^{\frac{1}{4}}, \,\text{ et la suite } (S_{n})_{\tau_{A} \leq n \leq \tau_{A}+ \frac{1}{2}j^{\frac{1}{4}}} \text{ rencontre $k$ fois $0$})\\
&\geq \mathbb{P}_{l}(\tau_{A} \leq \frac{1}{2}j^{\frac{1}{4}}) (1- \varepsilon)\\
&\geq (1- 2c\frac{p_{j}+1}{j^{1/8}})(1- \varepsilon)\\
& \geq 1- \varepsilon/2
\end{align*}

Comme la valeur $\varepsilon >0$ peut être choisie arbitrairement petite, cela donne la convergence annoncée.

Ainsi, pour  $j \geq 0$ assez grand, on a  l'inégalité  :
$$\mathbb{P}_{0}(j < \tau^k \leq j+ j^{1/4} \,|\, F_{j})\geq \frac{1}{2}$$
i.e.
$$\mathbb{P}_{0}(j < \tau^k \leq j+ j^{1/4} ) \geq \frac{1}{2} \mathbb{P}_{0}(F_{j})$$

\bigskip

On est donc ramené à majorer $\sum_{j \geq 0} \mathbb{P}_{0}(j < \tau^k \leq j+ j^{1/4})$. Or on peut écrire 
\begin{align*}
\sum_{j \geq 0} \mathbb{P}_{0}(j < \tau^k \leq j+ j^{1/4}) &= \sum_{j \geq 0} \sum_{j < n \leq j+ j^{1/4} }\mathbb{P}_{0}(\tau^k =n)\\ 
&\leq \sum_{n \geq 0} n^{\frac{1}{4}}\mathbb{P}_{0}(\tau^k =n) \\
&= \mathbb{E}[(\tau^k)^{\frac{1}{4}}]
\end{align*}
 où l'inégalité provient de la condition $j < n \leq j+ j^{1/4}$ qui implique que $j \in  \N \cap [n- n^{\frac{1}{4}}, n-1 ]$, intervalle d'entiers de cardinal au plus $n^{\frac{1}{4}}$. 

Il suffit donc de montrer que $\tau^k$ admet un moment d'ordre $1/4$. Cela équivaut à montrer la convergence $$\sum_{j \geq 0 } \mathbb{P}_{0}(\tau^k > j^{4}) < \infty$$

Ecrivons $ \mathbb{P}_{0}(\tau^k > j^{4}) = u_{j}+v_{j}$ où $$u_{j}:=  \mathbb{P}_{0}(\tau^k > j^{4}, \widebar{\tau}^{p_{j}}_{A} \leq j^{4}-N) \,\,\,\,\,\,\,\,\,\,\,\,v_{j}:=  \mathbb{P}_{0}(\tau^k > j^{4}, \widebar{\tau}^{p_{j}}_{A} > j^{4}-N)$$ 

avec $ \widebar{\tau}^{p_{j}}_{A}$ défini par récurrence en posant :
$\widebar{\tau}^1_{A}= \tau_{A}$ et $\widebar{\tau}^{i+1}_{A}:=\inf\{n \geq \widebar{\tau}^i_{A} +N\, |\, S_{n}\in A \}$ (temps de retour à $A$ avec écart). 
\begin{itemize} 
\item
$\sum_{j \geq 0} u_{j}<\infty$ :

 Pour $l_{1}, \dots, l_{p_{j}} \in A$, on définit l'évènement :
 $$E(l_{1}, \dots, l_{p_{j}}) := \bigcap_{i=1\dots p_{j}}\{S_{\widebar{\tau}^i_{A}}=l_{i}, \text{et la suite $(S_{n})_{\widebar{\tau}^i_{A} \leq n < \widebar{\tau}^i_{A} +N }$ rencontre $0$ au plus $k-1$ fois  }\}$$
 
 On a $u_{j} = \sum_{l_{1}, \dots, l_{p_{j}} \in A} \mathbb{P}_{0}(\tau > j^4, \widebar{\tau}^{p_{j}}_{A} \leq j^{4}-N, E(l_{1}, \dots, l_{p_{j}})) \leq \sum_{l_{1}, \dots, l_{p_{j}} \in A} \mathbb{P}_{0}(E(l_{1}, \dots, l_{p_{j}})) $.
 Or $ \mathbb{P}_{0}(E(l_{1}, \dots, l_{p_{j}})) \leq  \varepsilon \mathbb{P}_{0}(E(l_{1}, \dots, l_{p_{j}-1} )) \leq \varepsilon ^{p_{j}}$. Finalement $u_{j}\leq ((2a+1)\varepsilon)^{p_{j}}$ et la somme converge par choix de $\varepsilon <   \frac{1}{2a+1}$. 
 
\item
$\sum_{j \geq 0} v_{j}<\infty$ : Notons $\tau^i_{A}$ le temps de $i$-ème retour à l'intervalle $A$ (sans contrainte d'écart minimal entre $\tau^i_{A}$ et $\tau^{i+1}_{A}$) avec pour convention $\tau^0_{A}=0$. Pour $j > N^{\frac{1}{4}}$, on a 
\begin{align*}
v_{j}&\leq \mathbb{P}_{0}(\widebar{\tau}^{p_{j}}_{A} > j^{4}-N)\\
 &\leq \mathbb{P}_{0}(\tau^{Np_{j}}_{A} > j^{4}-N)\\
&\leq \sum_{0\leq i \leq Np_{j}-1}\mathbb{P}_{0}(\tau^{i+1}_{A} -\tau^{i}_{A}> \frac{j^{4}-N}{Np_{j}})\\
& \leq Np_{j} c\frac{a\sqrt{Np_{j}}}{ \sqrt{j^4-N}} \tag{d'après le \cref{LL}}\\
&= O(\frac{p_{j}^{3/2}}{j^2})
\end{align*}
 . Les $(v_{j})$ sont donc de série convergente.

\end{itemize}

\subsection{Cas où $\chi(\Gamma) \subseteq \R$ est dense}
\label{Nondegnonat}
\bigskip

Introduisons quelques notations utiles. Etant donné un intervalle $I \subseteq \R$, on note  $\nu_{I}:= \nu(.\times I)$, $\nu_{b,I}:= \nu_{b}(.\times I) \in \mathcal{M}^f(\T^d)$.  Le \cref{torenu_{b}} définissant les $\nu_{b}$ implique que 
\begin{itemize}
\item $\nu_{I}= \int_{B}\nu_{b, I} \,\, d\beta(b)$
\item $\forall^\beta b \in B,\,\, \nu_{b,I}(\T^d) = \nu_{I}(\T^d)$
\item $\forall^\beta b \in B,\,\,  (b_{1})_{\star}\nu_{Tb,I}= \nu_{b, I - \chi(b_{1})}$
\end{itemize}

Par ailleurs, on rappelle le deuxième point du \cref{premcor} : il existe une constante $c>0$ tel que la mesure projetée $\nu(\T^d \times .)=c \leb$. Quitte à normaliser $\nu$, on pourra supposer que $$\nu(\T^d \times .)=\leb$$
 Le  \cref{torenu_{b}} implique alors que pour $\beta$-presque tout $b \in B$, on a $\nu_{b}(\T^d\times .)= \leb$, ou de fa\c con équivalente  $\nu_{b,I}(\T^d)= \leb(I)$ pour tout intervalle $I\subseteq \R$. 
\bigskip

\begin{proof}[Preuve du \cref{THnondeg}]

Il suffit de montrer que pour $\beta$-presque tout $b \in B$, pour tout intervalle $I \subseteq \R$ de longueur $1$, la mesure $\nu_{b,I}$ sur $\T^d$ est sans atome. On suppose par l'absurde que ce n'est pas le cas. Il existe alors $I_{0} \subseteq \R$ intervalle de longueur $1$ tel que 
$$ \beta \{b \in B, \,\,\nu_{b,I_{0}} \text{ a au moins un atome}    \}>0\,\,\,\,\,\,\,\,\,(\star)$$

La première étape est de montrer que les $\nu_{b,I}$ sont tous atomiques, avec les mêmes poids :

\begin{lemme}\label{ref12}
Il existe un ensemble dénombrable $S$, une famille de coefficients $(\alpha_{i})_{i \in S} \in \R^S_{>0}$ tels que pour $\beta$-presque tout $b \in B$, pour tout intervalle $I \subseteq \R$ de longueur $1$, il existe une famille $(x_{i})_{i \in S} \in (\T^d)^S$ de points deux à deux distincts de $\T^d$ tels que  la mesure $\nu_{b,I}$ est de la forme : $$\nu_{b,I} = \sum_{i \in S} \alpha_{i} \delta_{x_{i}}$$

\end{lemme}

\begin{proof}

Commen\c cons par prouver que les $\nu_{b,I}$ sont atomiques. L'idée est de décomposer $\nu_{b}$ comme somme d'une mesure dont la projection sur $\T^d$ est sans atome, et d'une autre dont la projection sur $\T^d$ est purement atomique, puis d'utiliser l'ergodicité de $\nu$ pour montrer que la première est nulle.

\begin{lemme}\label{ref13}
Pour $\beta$-presque tout $b \in B$, il existe une décomposition
 $$\nu_{b}= \nu^{nat}_{b} + \nu^{at}_{b}$$
  où $\nu^{nat}_{b}, \nu^{at}_{b} \in \mathcal{M}^{Rad}(\T^d\times \R)$, telle que pour tout $I \subseteq \R$, $\nu^{nat}_{b,I}$ est sans atome, $\nu^{at}_{b,I}$ est atomique. 
De plus, cette décomposition est unique et $\mu$-équivariante : $\forall^\beta b \in B, \,\, (b_{1})_{\star} \nu^{nat}_{Tb}= \nu^{nat}_{b}, \,\,\,(b_{1})_{\star} \nu^{at}_{Tb}= \nu^{at}_{b}$.
 \end{lemme} 

\begin{proof}[Preuve du \cref{ref13}]
Soit $b \in B$ tel que $\nu_{b}(\T^d\times .)= \leb$. On pose $A_{b}:= \{x \in \T^d, \,\, \nu_{b}(\{x\}\times \R)  \in ]0, +\infty])\}$, puis $\nu^{at}_{b}:= \nu_{b |A_{b}\times \R}$, $\nu^{nat}_{b}:=  \nu_{b |A^c_{b}\times \R}$. Par défintion, pour tout intervalle $I$, les mesures $\nu^{at}_{b,I}$, $\nu^{nat}_{b}$ sont respectivement atomiques et sans atome. L'unicité se vérifie directement. L'équivariance annoncée provient de l'équivariance des $\nu_{b}$, qui implique notamment que pour $\beta$-presque tout $b \in B$, on a $b_{1}A_{Tb}= A_{b}$. 
\end{proof}

Montrons maintenant que $\beta$-presque sûrement, on a $\nu_{b} = \nu^{at}_{b}$. On pose $\nu^{nat} := \int_{B} \nu^{nat}_{b} \,\,d\beta(b)$, $\nu^{at} := \int_{B} \nu^{at}_{b} \,\,d\beta(b)$. Ce sont des mesures $\mu$ stationnaires grâce à la relation d'équivariance du lemme $2$. De plus, $\nu^{at} \neq 0$ par hypothèse $(\star)$.  On a $\nu = \nu^{nat}+ \nu^{at}$ donc par ergodicité de $\nu$, il existe $c>0$ tel que $\nu = c \nu^{at}$. Or les mesures limites de $\nu^{at}$ sont les $(\nu^{at}_{b})$. On en déduit que les $\nu_{b,I}= c \nu^{at}_{b,I}$ sont atomiques, puis que $\nu^{nat}_{b}=0$. Finalement, $\nu= \nu^{at}$ et pour $\beta$-presque tout $b \in B$,  $\nu_{b}= \nu^{at}_{b}$. 

\bigskip

On prouve maintenant que les poids des atomes intervenant dans $\nu_{b,I}$, où $I \subseteq \R$ est un intervalle de longueur $1$, ne dépendent pas de $b$ et $I$. On rappelle qu'au vue de la normalisation de $\nu$, presque tous les $\nu_{b,I}$ sont de masse $1$. Soit $N \geq 0$, $k=(k_{i})_{i \geq 1} \in \{0, \dots ,2^N-1\}^{\N^{\star}}$. On définit $E_{N,k} := \Pi_{i \geq 1} ]\frac{k_{i}}{2^N}, \frac{k_{i}+1}{2^N}] \subseteq ]0,1]^{\N^\star}$.  On note $\mathcal{E}_{N,k} \subseteq B$ l'ensemble des $b \in B$ tels que les poids des atomes de $\nu_{b, [0,1]}$ classés par ordre décroissant appartiennent à $E_{N,k}$.

On a ainsi des partitions $B=\mathcal{E}_{0,0}$, et $\mathcal{E}_{N,k}= \amalg \{\mathcal{E}_{N+1, l},\,\,\, E_{N+1,l} \subseteq E_{N,k}\} $.  De plus, à $N$ fixé, la probabilité $\beta$ est concentrée sur une union finie d'ensembles $\mathcal{E}_{N,k}$ où $k$ varie dans $\{0, \dots ,2^N-1\}^{\N^{\star}}$. On  peut donc se donner une suite décroissante $(\mathcal{E}_{N, k^{(N)}})_{N \geq 0}$ (pour l'inclusion)  telle que pour tout $N \geq0$, on a $\beta(\mathcal{E}_{N, k^{(N)}})>0$. Pour $i \geq 1$, la suite $(\frac{k^{(N)}_{i}}{2^N}) \in [0,1]^{\N^\star}$ est de Cauchy, on note $\alpha_{i}:= \lim_{N \to \infty} \frac{k^{(N)}_{i}}{2^N}$. La suite des $(\alpha_{i})$ est décroissante et $\sum_{i \geq 1} \alpha_{i} = 1$ (en se rappelant que $\nu$ est normalisée, donc que les $\nu_{b,[0,1]}$ sont tous de masse $1$).

On pose $S:= \{i \geq 1, \,\, \alpha_{i}>0\}$ et on vérifie que la famille de coefficients $(\alpha_{i})_{i \in S}$ convient. Soit $b \in B$ $\beta$-typique, écrivons $\nu_{b, [0,1]} = \sum_{i \in \llbracket 1, p \rrbracket } c_{i} \delta_{x_{i}}$ où $p \in \N \cup \{\infty\}$, la suite $(c_{i})$ est décroissante strictement positive, les $x_{i}$ sont deux à deux distincts. Soit $N \geq 0$. Comme $\beta(\mathcal{E}_{N,k})>0$, il existe une extraction $\sigma : \N \rightarrow \N$ telle que pour tout $n \geq 0$, $T^{\sigma(n)}(b) \in \mathcal{E}_{N,k} $ et $\chi(b_{1}\dots b_{\sigma(n)}) \underset{n \to \infty}{\rightarrow} 0$ (par conservativité et ergodicité de la marche $\chi_{\star} \mu$ sur $\R$ pour la mesure de lebesgue). De plus, on a l'égalité :

$$ \nu_{b, [0,1]} = (b_{1}\dots b_{\sigma(n)})_{\star} \nu_{T^{\sigma(n)}b,\, [0,1]+\epsilon_{n}} $$
où $\epsilon_{n} = \chi(b_{1}\dots b_{\sigma(n)})$. 

Les poids des atomes de $\nu_{b,[0,1]}$ sont donc les mêmes que ceux de $\nu_{T^{\sigma(n)}b,\, [0,1]+\epsilon_{n}}$. Comme on a normalisé $\nu$ pour avoir $\nu(\T^d\times [0,1])=1$, on a $|\nu_{T^{\sigma(n)}b,\, [0,1]+\epsilon_{n}} - \nu_{T^{\sigma(n)}b, [0,1]} | \leq \epsilon_{n}$. Le fait que $T^{\sigma(n)}(b) \in \mathcal{E}_{N,k} $ implique alors que pour tout $i \in \llbracket 1, p \rrbracket$, pour tout $n \geq 0$ assez grand, on a $ \frac{k^{(N)}_{i}}{2^N} -\epsilon_{n} \leq c_{i} \leq \frac{k^{(N)}_{i} +1}{2^N} +\epsilon_{n}$. On fait  tendre $n \to +\infty$ pour obtenir $ \frac{k^{(N)}_{i}}{2^N} \leq c_{i} \leq \frac{k^{(N)}_{i} +1}{2^N}$, puis $N \to +\infty$ pour obtenir $c_{i}= \alpha_{i}$. On a montré que $ \llbracket 1, p \rrbracket \subseteq S$ et pour tout $i \in \llbracket 1, p \rrbracket, c_{i}= \alpha_{i}$. Comme $\sum_{i \in  \llbracket 1, p \rrbracket} c_{i} =1 = \sum_{i \in S} \alpha_{i}$, on a nécessairement que $ \llbracket 1, p \rrbracket = S$, ce qui conclut la preuve du \cref{ref12}.

\end{proof}

La suite de la preuve consiste à montrer que les mesures $\nu_{I} \in \mathcal{M}^f(\T^d)$ ne dépendent pas de l'intervalle $I$ de longueur $1$. Cela implique qu'à $I$ fixé, la mesure projetée  $\nu_{I}$ est  $\mu$-stationnaire sans atome, donc que les $\nu_{b,I}$ sont sans atomes (via Benoist-Quint). Pour prouver que $\nu_{I}$ ne dépend pas de $I$, il suffit de remarquer que c'est le cas des mesures limites $\nu_{b,I}$. 

\begin{lemme}
Pour $\beta$-presque tout $b\in B$, pour tous intervalles $I, J \subseteq \R$ de longueurs $1$ on a $\nu_{b,I}= \nu_{b,J}$. En particulier, $\nu_{I}=\nu_{J}$.
\end{lemme}

\begin{proof}
Soit $b \in B$ comme dans le \cref{ref12} et tel que $\nu_{b}(\T^d\times .)=\leb$. On énumère l'ensemble des poids $\alpha_{i}$ en une suite strictement décroissante $a_{1} := \max \alpha_{i} > a_{2} := \max_{\alpha_{i} < a_{1}} \alpha_{i}>\dots$ éventuellement finie.  On raisonne par couches en montrant d'abord que les atomes de masse $a_{1}$ de $\nu_{b,I}$ et $\nu_{b,J}$ sont les mêmes, puis que ceux de masse $a_{2}$ sont les mêmes etc. 

Soit $\epsilon  \in ]0, a_{1}-a_{2}[$. Pour tout $x \in \T^d$ tel que $\nu_{b,I}(x)=a_{1}$, on a $\nu_{b, I+ \epsilon} (x) \geq a_{1}- \epsilon > a_{2}$ (car $\nu_{b}(\T^d\times .)=\leb$). On a ainsi $\nu_{b, I+ \epsilon} (x) = a_{1}$ d'après le \cref{ref12}. Par symétrie, on a donc montré que $\{x \in \T^d, \nu_{b,I}=a_{1}\} =\{x \in \T^d, \nu_{b,I+\epsilon}=a_{1}\}$. Par connexité de $\R$, on en déduit que $\{x \in \T^d, \nu_{b,I}=a_{1}\} = \{x \in \T^d, \nu_{b,J}=a_{1}\} $.

On reprend ensuite le raisonnement précédant avec $\epsilon  \in ]0, a_{2}-a_{3}[$, puis $\epsilon  \in ]0, a_{3}-a_{4}[$, etc.  pour en déduire l'égalité des couches successives. Finalement, $\nu_{b,I}= \nu_{b,J}$.

Enfin, $\nu_{I}=\nu_{J}$ en intégrant l'égalité obtenue. 

\end{proof}

\begin{cor*}
Pour $I \subseteq \R$ intervalle de longueur $1$, la mesure $\nu_{I}$ est $\mu$-stationnaire.
\end{cor*}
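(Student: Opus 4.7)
L'id�e est d'exploiter directement la stationnarit� de $\nu$ sur $\T^d\times \R$, combin�e au lemme qui vient d'�tre �tabli et qui assure que la mesure $\nu_I \in \mathcal{M}^f(\T^d)$ ne d�pend pas de l'intervalle $I\subseteq \R$ de longueur $1$ choisi. Je fixe un bor�lien $A\subseteq \T^d$ et un intervalle $I\subseteq \R$ de longueur $1$, et j'�value l'identit� $\nu = \int_\Gamma g_\star\nu\, d\mu(g)$ sur le pav� $A\times I$. L'action $(g,(x,t))\mapsto (gx, t+\chi(g))$ donne $g^{-1}(A\times I)= g^{-1}A\times (I-\chi(g))$, d'o�
$$\nu_I(A) \;=\; \int_{\Gamma} \nu(g^{-1}A\times (I-\chi(g)))\, d\mu(g) \;=\; \int_{\Gamma} \nu_{I-\chi(g)}(g^{-1}A)\, d\mu(g).$$

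Le pas clef consiste alors � remarquer que pour tout $g\in \Gamma$ l'intervalle $I-\chi(g)$ est encore de longueur $1$, donc d'apr�s le lemme pr�c�dent $\nu_{I-\chi(g)}=\nu_I$. En reportant cette �galit� dans la formule ci-dessus, il vient
$$\nu_I(A) \;=\; \int_{\Gamma}\nu_I(g^{-1}A)\, d\mu(g) \;=\; \int_{\Gamma}(g_\star\nu_I)(A)\,d\mu(g),$$
qui est exactement la stationnarit� cherch�e pour la mesure finie $\nu_I$ sur $\T^d$ (sous l'action usuelle de $\Gamma\subseteq SL_d(\Z)$).

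\textbf{Obstacles.} Je n'entrevois pas de difficult� s�rieuse : tout le travail a d�j� �t� men� dans les lemmes pr�c�dents (existence de la d�composition atomique/sans atome, �galit� des poids des atomes, puis ind�pendance de $\nu_I$ vis-�-vis de $I$). L'interversion somme-int�grale n'appelle aucune justification puisque $\mu$ est � support fini. L'int�r�t du corollaire est imm�diat pour la suite du raisonnement par l'absurde : $\nu_I$ �tant une probabilit� (� constante pr�s) $\mu$-stationnaire et atomique sur $\T^d$ via le \cref{ref12}, le th�or�me de Benoist-Quint forcera ses atomes � d�crire une $\Gamma$-orbite finie de points rationnels, ce qui entrera en contradiction avec l'hypoth�se que $\nu(\cdot\times\R)$ est sans atome, achevant ainsi la preuve du th�or�me de non-d�g�n�rescence dans le cas $\chi(\Gamma)$ dense.
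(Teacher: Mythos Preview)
Your proof is correct and follows exactly the same route as the paper: evaluate the stationarity of $\nu$ on a product set $A\times I$, use $g^{-1}(A\times I)=g^{-1}A\times(I-\chi(g))$, and invoke the preceding lemma ($\nu_{I-\chi(g)}=\nu_I$) to conclude. One small caveat on your outlook in the \emph{Obstacles} paragraph: it is not $\nu_I$ that is atomic (indeed $\nu_I$ is without atom since $\nu(\cdot\times\R)$ is), but the limit measures $\nu_{b,I}$; the paper's contradiction comes from applying Benoist--Quint to the non-atomic $\nu_I$ to deduce that its limit measures $\nu_{I,b}=\nu_{b,I}$ are also non-atomic.
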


\begin{proof}
Soit $A \subseteq \T^d$ mesurable. On a $\int_{G} g_{\star} \nu_{I}(A) d\mu(g) = \int_{G} \nu(g^{-1}A \times I) \, d\mu(g) =\int_{G} \nu(g^{-1}A \times I - \chi(g)) \, d\mu(g) = \int_{G} (g_{\star}\nu)(A \times I) \, d\mu(g) = \nu_{I}(A)$.
\end{proof}

\emph{Conclusion}. Soit $I_{0}$ l'intervalle mentionné au début de la preuve. Comme la mesure $\nu_{I_{0}}$ sur $\T^d$ est finie, $\mu$-stationnaire, sans atome, et comme $\mu$ est à support fini engendrant un semi-groupe $\Gamma := \langle \text{supp}\, \mu \rangle$ fortement irréductible, on a via Benoist-Quint \cite{BQI} que les mesures limites $(\nu_{I_{0},b})_{b \in B}$ sont sans atome. Or pour $\beta$-presque tout $b \in B$, on a $\nu_{b,I_{0}}= \nu_{I_{0},b}$. Cela génère une contradiction car on était parti de l'hypothèse qu'une proportion non nulle de mesures $(\nu_{b,I_{0}})_{b \in B}$ avait au moins un atome, donc conclut la preuve du \cref{THnondeg}. 
\end{proof}

\newpage

\section{Description des fibres}

L'argument de dérive exponentielle repose sur une description précise des fibres du système dynamique $(B^+, T^+)$ introduit dans la section $4$. Rappelons brièvement sa définition. On s'est fixé une probabilité  $\mu \in \mathcal{P}(SL_{d}(\Z))$ à support fini, on a noté $\Gamma := \langle \supp \mu \rangle$ le  semi-groupe engendré par son support, et on s'est donné $\chi : \Gamma \rightarrow \R$ un morphisme de semi-groupes. Cela définit une action de $\Gamma$ sur $X:= \T^d \times \R$ via $g.(x,t):=(gx, t+ \chi(g))$, puis une marche aléatoire sur $X$ de probabilités de transitions $(\mu \star \delta_{(x,t)})_{(x,t) \in X}$. On a noté  $B = \Gamma^{\N^\star}$, $\beta:= \mu^{\otimes \N^\star}$,  $T : B \rightarrow B, (b_{i})_{i\geq 1} \mapsto (b_{i+1})_{i \geq 1}$ le shift. On a introduit $G := \widebar{\Gamma}^Z \subseteq SL_{d}(\R)$ l'adhérence de Zariski de $\Gamma$ puis des objets algrébiques $K$, $\mathfrak{a}$, $\Sigma$, $\Pi$, $\mathscr{P}$, etc. permettant d'exploiter la nature algébrique de $G$, ainsi que des applications $\theta : B \rightarrow \ag^F$, $O : B \rightarrow O_{r}(\R)$. On s'est enfin donné une mesure de Radon $\nu \in \mathcal{M}^{Rad}(X)$ $\mu$-stationnaire, et une décomposition $(\nu_{b})_{b \in B}$ de $\nu$ en mesures limites.  Tout cela a été possible sous l'hypothèse que $\Gamma$ est fortement  irréductible et $\chi_{\star}\mu \in \mathcal{P}(\R)$ est centrée.  
\bigskip

\emph{Dans cette sous-section, on n'utilisera pas les définitions précises des applications $\theta$ et $O$, ni les hypothèses faites sur $\mu$.}

\bigskip

Le système dynamique $(B^+, \beta^+, T^+)$ que nous considérons est donné par :

 \begin{itemize}
 
 \item $B^+ = B \times \ag^F \times O_{r}(\R) \times X$
 \item $T^+ : B^+ \rightarrow B^+, (b,z,O, x) \mapsto (Tb, z- \theta(b), OO(b)^{-1} , b^{-1}_{1}x)$.
 \item $\beta^+ := \int_{B} \delta_{(b,z,O)} \otimes \nu_{b} \,\, d\beta(b)d\text{leb}_{\ag^F}(z) dh(O)\,\,\,\,$ 
\end{itemize}
où $\text{leb}_{\ag^F}$  et $h$ sont des mesures de Haar  sur $\ag^F$ et $O_{r}(\R)$, fixées une fois pour toute. On notera $\mathcal{B}^+$ la tribu produit sur $B^+$.

\bigskip

Dans un premier temps, on donne une description précise des fibres de $(B^+, \beta^+, T^+)$ et de la désintégration de $\beta^+$ le long de celles ci. On voit ensuite pourquoi les fibres ne peuvent pas s'accumuler dans des parties de petite mesure (ce qui est un phénomène général).

\subsection{Mesures conditionnelles le long des fibres}

Soit $c \in B^+$, $n \geq 0$. On appelle $n$-fibre passant par $c$ l'ensemble $$F_{n,c}:=\{c' \in B^+, (T^+)^n(c')=(T^+)^n(c)\}$$

Le lemme suivant affirme qu'on peut paramétrer $F_{n,c}$ par $\Gamma^n$. On note $$\theta_{n}(b):= \sum_{k=0}^{n-1} \theta(T^{k}b) \,\,\,\,\,\,\,\,\,\,\,\,\,\,\,\,\,\,O_{n}(b):= O(T^{n-1}b)O(T^{n-2}b)\dots O(b)$$

\begin{lemme} \label{parfib}
Pour $c =(b,z,O,x) \in B^{f}$, $n \geq 0$, définissons 
$$h_{n,c} : \Gamma^n \rightarrow B^+, \,\,a \mapsto (aT^nb, \, z  - \theta_{n}(b) + \theta_{n}(aT^nb), OO_{n}(b)^{-1}O_{n}(aT^nb),  \, a_{1}\dots a_{n}b^{-1}_{n}\dots b^{-1}_{1}.x )$$
où on a noté $a=(a_{1}, \dots, a_{n})$ et $aT^nb=(a_{1}, \dots, a_{n},b_{n+1}, b_{n+2}, \dots)$. 

Alors $h_{n,c}$ est une bijection entre $G^n$ et $F_{n,c}$.
\end{lemme}

\begin{proof}
C'est un calcul direct.
\end{proof}

Désintégrons maintenant $\beta^+$ le long des fibres. Pour $n \geq0$,  on note $\mathcal{Q}^+_{n}:=(T^+)^{-n}(\mathcal{B}^+)$.  C'est une sous tribu de $\mathcal{B}^+$ dont l'atome contenant un point $c \in B^+$ est exactement $F_{n,c}$. Comme la mesure restreinte $\beta^+_{|\mathcal{Q}^+_{n}}$ est $\sigma$-finie, on peut se donner une décomposition de $\beta^+$ en mesures conditionnelles par rapport à la sous tribu $\mathcal{Q}^+_{n}$, que l'on note $(\beta^+_{n,c})_{c \in B^+}$. Rappelons que la famille $(\beta^+_{n,c})_{c \in B^+}$ est une famille $\mathcal{Q}^+_{n}$-mesurable de mesures sur $B^+$ telle que $\beta^+ = \int_{B^+} \beta^+_{n,c} \, d\beta^+(c)$ et vérifiant pour $\beta^+$-presque tout $c$ que $\beta^+_{n,c}$ est concentrée sur $F_{n,c}$. De plus, une telle famille est unique (en dehors d'un ensemble $\beta^+$-négligeable) et liée à la notion d'espérance conditionnelle : si $\varphi : B^+ \rightarrow [0, +\infty]$ est une fonction positive $\mathcal{B}^+$-mesurable, alors pour $\beta^+$-presque tout $c \in B^+$, on a $\mathbb{E}(\varphi |  \mathcal{Q}^+_{n})(c)= \beta^+_{n,c}(\varphi)$. 

\bigskip

Le \cref{parfib} identifie $F_{n,c}$ à $\Gamma^n$. La mesure conditionnelle $\beta^+_{n,c} \in \mathcal{M}(F_{n,c})$ correspond alors à la probabilité $\mu^{\otimes n}$ :  

\begin{lemme} \label{mesfib}
Pour $\beta^+$-presque tout $c \in B^+$, on a $\beta^+_{n,c} = (h_{n,c})_{ \star} \mu^{\otimes n}$

\end{lemme}

\begin{proof}
Il s'agit de vérifier que les $(h_{n,c})_{ \star} \mu^{\otimes n}$ vérifient les propriétés caractérisant les mesures conditionnelles $\beta^+_{n,c}$. L'application $h_{n,c}$ ne dépend que de $(T^+)^n(c)$ donc la famille $(h_{n,c})_{ \star} \mu^{\otimes n}$ est $\mathcal{Q}^+_{n,c}$-mesurable. Le lemme précédent garantit que ces mesures sont concentrées sur les $n$-fibres. Pour conclure, on doit vérifier que  $\int_{B^+} (h_{n,c})_{ \star} \mu^{\otimes n} \,d\beta^+(c)= \beta^+$.  Soit $\varphi : B^+ \rightarrow [0, +\infty]$ une application mesurable. En notant $c=(b,z,O,x) \in B^+$,  en utilisant le théorème de Fubini, l'invariance des mesures Haar et l'équivariance des mesures limites, on obtient :
\begin{align*}
&\,\,\,\,\,\,\,\,\int_{B^+} (h_{n,c})_{ \star} \mu^{\otimes n}(\varphi) d\beta^+(c)\\
&=  \int_{B^+} \int_{\Gamma^n}
\varphi(aT^nb, \, z  - \theta_{n}(b) + \theta_{n}(aT^nb), OO_{n}(b)^{-1}O_{n}(aT^nb),  \, a_{1}\dots a_{n}b^{-1}_{n}\dots b^{-1}_{1}x) d\mu^{\otimes n}(a) d\beta^+(c)\\
& = \int_{B^+} \int_{\Gamma^n} \varphi (aT^nb, \,z ,\, O, \, a_{1}\dots a_{n}b^{-1}_{n}\dots b^{-1}_{1}x)\,\,d\mu^{\otimes n}(a) d\beta^+(c)\\
& = \int_{B \times \ag^F \times O_{r}(\R)}\int_{\Gamma^n} \int_{X}  \varphi (aT^nb, \,z ,\, O, \, a_{1}\dots a_{n}b^{-1}_{n}\dots b^{-1}_{1}x)\,\,d\nu_{b}(x) d\mu^{\otimes n}(a) d\beta(b)d \text{leb}_{\ag^F}(z)dh(O)\\
& = \int_{B \times \ag^F \times O_{r}(\R)}\int_{\Gamma^n} \int_{X}  \varphi (aT^nb, \,z ,\, O, \,x)\,\, d\nu_{aT^nb}(x) d\mu^{\otimes n}(a) d\beta(b)d \text{leb}_{\ag^F}(z)dh(O)\\
& = \int_{B \times \ag^F \times O_{r}(\R)} \int_{X}  \varphi (aT^nb, \,z ,\, O, \,x)\,\, d\nu_{b}(x) d\beta(b)d \text{leb}_{\ag^F}(z)dh(O)\\
&= \beta^+(\varphi)
\end{align*}

\end{proof}

La preuve de la dérive exponentielle se fera en restriction à une fenêtre $W \subseteq B^+$ (formellement une partie $\mathcal{B}^+$-mesurable de mesure finie non nulle). On termine ce paragraphe en  désintégrant la mesure $\beta^+_{|W}$ restreinte à la fenêtre par rapport aux morceaux de fibres $F_{n,c}  \cap W$. Pour cela, notons $\beta^+_{W}:= \beta^+_{|W}$, $\mathcal{Q}^+_{W, n}:= (\mathcal{Q}^+_{n})_{|W}$ la tribu trace de $\mathcal{Q}^+_{n}$ sur $W$, et $(\beta^+_{W,n,c})_{c \in W}$ une décomposition de $\beta^+_{W}$ en mesures conditionnelles par rapport à la sous tribu $\mathcal{Q}^+_{W, n}$.  On pose par ailleurs $Q_{n,c}:= \{a \in \Gamma^n, h_{n,c}(a) \in W\}$. 

\bigskip

Le morceau de fibre $F_{n,c} \cap W$ s'identifie à $Q_{n,c}$. Le lemme suivant affirme qu'alors la mesure conditionnelle $\beta^+_{W,n,c} \in \mathcal{M}(F_{n,c}\cap W)$ correspond à la mesure $\mu^{\otimes n}_{|Q_{n,c}}$ normalisée. 
\begin{lemme}
Soit $Q_{n,c}:= \{a \in \Gamma^n, h_{n,c}(a) \in W\}$. Pour $\beta_{W}^+$-presque tout $c \in W$, on a $\mu^{\otimes n}(Q_{n,c})>0$ et $$\beta^+_{W, n,c} = (h_{n,c})_{ \star} \frac{1}{\mu^{\otimes n}(Q_{n,c})}   \mu^{\otimes n}_{|Q_{n,c}}$$
\end{lemme}

\begin{proof}
Au vu du \cref{mesfib}, il s'agit de montrer que pour $\beta^+_{W}$-presque tout $c \in W$, on a $\beta^+_{n, c}(W)>0$ et $$\beta^+_{W, n,c} = \frac{1}{\beta^+_{n, c}(W)}   (\beta^+_{n, c})_{|W}$$

Vérifions la première assertion. On pose $E:= \{c \in B^+, \beta^+_{n, c}(W)=0\,\}$. Alors $0= \int_{E}\beta^+_{n, c}(W) d\beta^+(c)=\int_{E} \mathbb{E}_{\beta^+}(1_{W}| \mathcal{Q}^+_{n})(c) d\beta^+(c)= \int_{E}1_{W}(c) d\beta^+(c)$ car $E$ est $\mathcal{Q}^+_{n}$-mesurable. Ainsi, $\beta^+(E\cap W)=0$. 

Vérifions l'expression de $\beta^+_{W, n,c}$. Il s'agit de voir que la famille $ (\frac{1}{\beta^+_{n, c}(W)}   (\beta^+_{n, c})_{|W})_{c \in W}$ (définie pp) vérifie les propriétés caractérisant les mesures conditionnelles. Il est immédiat qu'elle est $\mathcal{Q}^+_{W, n}$-mesurable, et que pour presque tout $c \in W$, on a $\frac{1}{\beta^+_{n, c}(W)}   (\beta^+_{n, c})_{|W}$ concentré sur l'atome de $\mathcal{Q}^+_{W, n}$ contenant $c$. Par ailleurs, si $\varphi : W \rightarrow [0,+\infty]$ est une fonction positive $\mathcal{B}^+_{|W}$-mesurable, on a 
\begin{align*}
\int_{W} \frac{ (\beta^+_{n, c})_{|W}(\varphi)}{\beta^+_{n, c}(W)} d\beta^+(c) &=\int_{B^+} 1_{W}(c) \frac{ \mathbb{E}_{\beta^+}(\varphi| \mathcal{Q}^+_{n})(c)}{\mathbb{E}_{\beta^+}(1_{W}| \mathcal{Q}^+_{n})(c)} d\beta^+(c) \\
&=  \int_{B^+} \mathbb{E}_{\beta^+}(1_{W}| \mathcal{Q}^+_{n})(c) \frac{ \mathbb{E}_{\beta^+}(\varphi| \mathcal{Q}^+_{n})(c)}{\mathbb{E}_{\beta^+}(1_{W}| \mathcal{Q}^+_{n})(c)} d\beta^+(c) \\
&=  \int_{B^+} \mathbb{E}_{\beta^+}(\varphi| \mathcal{Q}^+_{n})(c) d\beta^+(c) \\
&= \beta^+(\varphi)\\
\end{align*}
On a donc l'expression annoncée pour les mesures conditionnelles restreintes. 
\end{proof}

\subsection{Equirépartition des morceaux de fibres}

Dans cette partie, on montre que les morceaux de fibres $F_{n,c} \cap W$ du paragraphe précédent ne peuvent pas s'accumuler dans des parties de $W$ de mesure trop petite. C'est en fait un résultat général que nous allons expliquer dans un cadre abstrait. Soit $(E, \mathcal{E})$ un espace mesurable, $m\in \mathcal{M}^\sigma(E, \mathcal{E})$ une mesure $\sigma$-finie sur $(E, \mathcal{E})$, et $T: E \rightarrow E$ une application mesurable telle que $T_{\star}m=m$.


\subsubsection*{Cas probabilisé : équirépartition des fibres}

Supposons $m(E)=1$. Pour $c \in E$, $n \geq0$, on note  toujours  $F_{n,c} := \{c' \in E, \,\,T^n(c') = T^n(c)\}$ la $n$-fibre passant par $c$. 

On veut comprendre comment se répartissent les points de $F_{n,c}$ dans $E$ pour $n$ grand et $c$ fixé. Pour cela, on introduit $\mathcal{Q}_{n} := T^{-n}(\mathcal{E})$ sous tribu de $\mathcal{E}$, dont l'atome contenant $c$ est exactement $F_{n,c}$. Alors, si on se donne une partie mesurable $A \in \mathcal{E}$, l'espérance conditionnelle $\mathbb{E}_{m}(1_{A} | \mathcal{Q}_{n})(c)$ décrit la proportion selon $m$ d'éléments de $F_{n,c}$ se trouvant dans $A$. On énonce le théorème d'équirépartition des fibres de la fa\c con suivante : on note $\mathcal{Q}_{\infty} := \cap_{n \geq 0} \mathcal{Q}_{n}$. 

\begin{th.}
Soit $\psi \in L^1(E, \mathcal{E}, m)$. On a la convergence presque sûre :

$$\mathbb{E}_{m}(\psi | \mathcal{Q}_{n}) \rightarrow \mathbb{E}_{m}(\psi | \mathcal{Q}_{\infty}) $$
\end{th.}

\begin{proof}
La suite de sous-tribus $(\mathcal{Q}_{n})_{n \geq 0}$ est de plus en plus grossière. On applique un théorème de convergence des martingales (par rapport à des filtrations décroissantes). 
\end{proof}

\subsubsection*{ En restriction à une fenêtre : équirépartition des morceaux de fibres}

Nous serons amenés à étudier les fibres d'un système dynamique en mesure infinie, et plus précisément la fa\c con dont ces fibres se répartissent dans une fenêtre de mesure finie. Voyons que les résultats du cas probabilisé se transposent à ce cadre.

Reprenons les notations précédentes, mais avec $m$ seulement supposée $\sigma$-finie. On se donne $W \subseteq E$ une partie mesurable de mesure finie non nulle.

On a alors un espace mesuré restreint $(W, \mathcal{E}_{|W}, m_{|W})$ et des sous-tribus traces $\mathcal{Q}_{W,n} :={\mathcal{Q}_{n}}_{|W} $, $\mathcal{Q}_{W,\infty} := \bigcap_{n \geq 0}\mathcal{Q}_{W,n}$.  Alors, si on se donne une partie mesurable $A \in \mathcal{E}_{|W}$, l'espérance conditionnelle $\mathbb{E}_{m_{|W}}(1_{A} | \mathcal{Q}_{W,n})(c)$ décrit la proportion d'éléments de $F_{n,c}$ se trouvant dans $A$ parmi ceux qui se trouvent dans la fenêtre $W$. On énonce le théorème d'équirépartition des morceaux de fibres de la fa\c con suivante :

\begin{th.}
Soit $\psi \in L^1(W, \mathcal{E}_{|W}, m_{|W})$. On a la convergence presque sûre :

$$\mathbb{E}_{m_{|W}}(\psi | \mathcal{Q}_{W,n}) \rightarrow \mathbb{E}_{m_{|W}}(\psi | \mathcal{Q}_{W, \infty}) $$
\end{th.}

\begin{proof}
C'est la même démonstration que dans le cas des fibres.
\end{proof}

Expliquons enfin  pourquoi ce théorème affirme que les morceaux de fibres ne s'accumulent pas dans  des parties de $W$ de mesure trop petite. 

\begin{cor}\label{nonacc}
Soit $\varepsilon>0$, $N \subseteq W$ une partie mesurable telle que $m(N)< \frac{\varepsilon^2}{2}$. Alors il existe $K \subseteq W$ mesurable vérifiant $m(W-K)<\varepsilon$ et un rang $n_{0}\geq0$ tels que pour tout $c \in K$, pour tout $n \geq n_{0}$, on a 
$$\mathbb{E}_{m_{|W}}(1_{N}| \mathcal{Q}_{W,n})(c) < \varepsilon$$
\end{cor}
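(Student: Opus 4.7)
The plan is to apply the equidistribution theorem for pieces of fibers, which was just stated, to the indicator function $\psi = 1_N \in L^1(W, \mathcal{E}_{|W}, m_{|W})$. Setting $f_n := \mathbb{E}_{m_{|W}}(1_N \mid \mathcal{Q}_{W,n})$ and $f := \mathbb{E}_{m_{|W}}(1_N \mid \mathcal{Q}_{W,\infty})$, one obtains $f_n \to f$ $m_{|W}$-almost surely. Since $\int_W f \, dm = m(N) < \varepsilon^2/2$, Markov's inequality yields
\[
m\{ f \geq \varepsilon/2 \} \;\leq\; \frac{m(N)}{\varepsilon/2} \;<\; \varepsilon.
\]
Denote this ``bad'' set by $A$; on $W \setminus A$ the limit $f$ is strictly less than $\varepsilon/2$, which leaves a positive margin $\varepsilon - m(A) > 0$ to absorb a further exceptional set.

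To turn the almost-sure convergence into a uniform-in-$c$ statement with a common rank $n_0$, I would invoke Egorov's theorem on the finite measure space $(W, m_{|W})$: for any $\delta > 0$ there exists $K'' \subseteq W$ with $m(W \setminus K'') < \delta$ on which $f_n \to f$ uniformly. Choosing $\delta \in (0,\, \varepsilon - m(A))$ and setting $K := K'' \setminus A$, one has $m(W \setminus K) \leq m(A) + \delta < \varepsilon$. Uniform convergence provides an index $n_0$ such that $|f_n(c) - f(c)| < \varepsilon/2$ for every $c \in K''$ and $n \geq n_0$; combined with $f(c) < \varepsilon/2$ on $K$, this gives $f_n(c) < \varepsilon$ on $K$ for all $n \geq n_0$, which is exactly the claimed bound.

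I do not expect a genuine obstacle in this argument: it is a standard packaging of the backward martingale convergence theorem (already invoked to prove the equidistribution theorem) together with the elementary inputs of Markov and Egorov. The only point demanding a little care is making sure the total error budget $\varepsilon$ splits into two strictly positive pieces, one for the Markov cutoff and one for the Egorov exceptional set, which is precisely what the slightly stronger hypothesis $m(N) < \varepsilon^2/2$ (rather than $m(N) < \varepsilon^2$) is tailored to permit.
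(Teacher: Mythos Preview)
Your proposal is correct and follows essentially the same approach as the paper: apply the equidistribution theorem to $1_N$ to get almost-sure convergence $f_n\to f$, use Markov's inequality on $\int f = m(N) < \varepsilon^2/2$ to bound $m\{f\ge \varepsilon/2\}<\varepsilon$, then invoke Egorov to upgrade to uniform convergence off a further small set and combine. The only cosmetic difference is that the paper applies Egorov directly inside $\{f<\varepsilon/2\}$ rather than on all of $W$ and then intersecting, but the logic and the use of the strict inequality to leave room for the Egorov exceptional set are identical.
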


\begin{rem.}
L'énoncé du corollaire suppose qu'on a fixé des versions des espérances conditionnelles $\mathbb{E}(1_{N}| \mathcal{Q}_{W,n})$ dans $\mathscr{L}^1_{m_{|W}}(W, \mathcal{Q}_{W,n})$.
\end{rem.}

\begin{proof}
Notons $\psi= \mathbb{E}_{m_{|W}}(1_{N}| \mathcal{Q}_{W,\infty}) \in \mathscr{L}^1_{m_{|W}}(W, \mathcal{Q}_{W,\infty})$ une version de l'espérance conditionnelle de $1_{N}$ par rapport à $\mathcal{Q}_{W,\infty}$. C'est une fonction non négative telle que $\int_{W} \psi \,\,dm_{|W} < \frac{\varepsilon^2}{2}$. On en déduit que  $m_{|W}\{\psi \geq \frac{\varepsilon}{2}\} < \varepsilon$. Or d'après le théorème d'équirépartition des morceaux de fibres, on a la convergence presque sûre $\mathbb{E}_{m_{|W}}(1_{N}| \mathcal{Q}_{W,n}) \rightarrow \psi$. De plus, le théorème d'Egoroff donne l'existence d'une partie mesurable $K \subseteq \{\psi < \frac{\varepsilon}{2}\}$ telle que $m(W-K) <\varepsilon$ et sur laquelle la convergence est uniforme.  Il existe donc un rang $n_{0} \geq 0$ à partir duquel pour tout $c \in K$, on a $|\mathbb{E}_{m_{|W}}(1_{N}| \mathcal{Q}_{W,n})(c) - \psi(c)| < \varepsilon/2$ puis $\mathbb{E}_{m_{|W}}(1_{N}| \mathcal{Q}_{W,n})(c) < \varepsilon$.

\end{proof}


\newpage

\section{Théorème local limite}

Cette partie prépare la preuve de la loi des angles donnée dans la section suivante. On y prouve un théorème local limite qui permettra d'estimer selon quelle proportion une fibre du système dynamique $(B^+, T^+, \beta^+)$ rencontre une fenêtre $W \subseteq B^+$ fixée au préalable.

\subsection{Rappels}
On rappelle un théorème local limite avec déviations modérées démontré dans \cite{BQRW}. Les notations de cette sous-section sont indépendantes de ce qui précède. On pourra se référer à \cite{BQRW} (sections $11$, $15$, $16$) pour un exposé détaillé. Soit $X$ un espace métrique compact,  $\Gamma$ un semi-groupe localement compact à base dénombrable agissant continument sur $X$, $F$ un groupe fini, $s : \Gamma \rightarrow F$ un morphisme de groupes, et $f: X \rightarrow F,\, x \mapsto f_{x}$ une application continue $\Gamma$-équivariante. 

Plus tard, on spécifiera $X= G/P_{c}$ variété drapeau d'un groupe algébrique réel semi-simple,  $\Gamma \subseteq G$ sous groupe discret Zariski-dense agissant sur $X$ par multiplication à gauche, $F= G/G_{c}$, $s$, $f$ les projections canoniques sur  $F$. 

On se donne une probabilité $\mu \in \mathcal{P}(\Gamma)$ sur $\Gamma$, apériodique dans $F$ (autrement dit,  le support de $s_{\star} \mu$ egendre $F$ et n'est contenu dans aucun translaté de sous-groupe cocylique de $F$, hormis $F$ tout entier). On suppose de plus que l'action de $\Gamma$ sur $X$ est $\mu$-contractante au dessus de $F$ (i.e. que l'itération de la $\mu$-marche sur $X$ contracte la distance sur chaque fibre de $f$, cf. \cite{BQRW}, section $11.1$ pour une définition précise). Dans ce cas, $X$ admet une unique probabilité $\mu$-stationnaire que l'on note $\nu_{0}$ (cf. \cite{BQRW}, lemme $11.5$)

On se donne $\sigma : \Gamma\times X \rightarrow \mathbb{R}^d$ un cocycle continu tel que les fonctions $\sigma_{sup} : \Gamma \rightarrow [0, +\infty], g \mapsto \sup_{x \in X}||\sigma(g,x)||$ et $\sigma_{lip} : \Gamma \rightarrow [0, +\infty], \,g \mapsto \underset{  f_{x}=f_{x'}}{\sup} \frac{||\sigma(g,x) - \sigma(g,x')||}{d(x,x')}$ sont $\mu$-presque sûrement bornées.

 D'après (\cite{BQRW}, 3.4.1), le cocycle $\sigma$ est cohomologue à un cocycle $\sigma_{0} : \Gamma\times X \rightarrow \mathbb{R}^d$ dont la fonction $\sigma_{0,\sup}$ est $\mu$-presque sûrement bornée et tel que l'application de dérive  $X \rightarrow \R^d, x\mapsto \int_{\Gamma}\sigma_{0}(g,x)d\mu(g)$ est constante (on dit que $\sigma$ est spécial). On notera 
 $$\sigma_{\mu} := \int_{\Gamma\times X}\sigma(g,x)\,\,d\mu \otimes \nu_{0}(g,x) \in \R^d$$ 
 cette constante et 
 $$E_{\mu} :=\text{Vect}_{\R}\{\sigma_{0}(g,x) - \sigma_{\mu}, \,\,g \in \text{supp}\,\mu, \,x\in \text{supp}\, \nu_{0}\}$$

  On a par ailleurs (via \cite{BQRW}, corollary 15.10) l'existence d'un plus petit sous groupe fermé de $\R^d$, noté $\Delta_{\mu}$, tel qu'il existe une fonction continue $\varphi : \supp \nu_{0} \rightarrow \R^d/\Delta_{\mu}$  et une constante $v \in \R^d/\Delta_{\mu}$ pour lesquels, pour tout $g \in \supp \mu$, $x \in \supp \nu_{0}$ on a $$\sigma(g,x) \mod \Delta_{\mu} = v + \varphi(x) - \varphi(gx)$$. 

Remarquons qu'à priori,  $\varphi$ n'admet pas de relevé à $\R^d$. Le sous groupe $\Delta_{\mu}$ est appelé \emph{image résiduelle} du cocycle $\sigma$. On a de plus $\Delta_{\mu} \subseteq E_{\mu}$ cocompact (\cite{BQRW}, proposition $15.8$).

On énonce maintenant le théorème local limite sous les hypothèse où $E_{\mu}= \R^d$ et il existe une fonction continue $\widetilde{\varphi}_{0} : \supp \nu_{0} \rightarrow \R^d$ telle que pour tout $g \in \supp \mu$, $x \in \supp \nu_{0}$ on a $$\sigma(g,x) \mod \Delta_{\mu} = \widetilde{\varphi}_{0}(x) - \widetilde{\varphi}_{0}(gx) \mod \Delta_{\mu}$$
La première hypothèse garantit que la matrice de covariance $\Phi_{\mu} := \text{cov}_{\mu \otimes \nu_{0}}(\sigma_{0}) \in M_{d}(\R)$ est symétrique définie positive, et induit donc un produit scalaire sur $\R^d$. C'est en particulier un produit scalaire sur la composante neutre $\Delta_{\mu,0}$ de $\Delta_{\mu}$ (qui est un sous-espace vectoriel de $\R^d$). On note $\pi_{\mu}$ la mesure de Haar sur $\Delta_{\mu}$ qui donne le poids $1$ aux cubes unités de $\Phi_{\mu |\Delta_{\mu,0}}$. Dans le cas où $\Delta_{\mu}$ est discret, il s'agit de la mesure de comptage. La mesure intervenant dans le théorème local limite est une moyenne de translatés de $\pi_{\mu}$, définie pour $C \subseteq \R^d$ mesurable, par :
$$  \Pi_{\mu}(C):= \int_{X} \pi_{\mu}(C + \widetilde{\varphi}_{0}(x)) \,d\nu_{0}(x)$$

On peut enfin énoncer un théorème local limite avec déviations modérées. Une démonstration se trouve dans \cite{BQRW} (theorem $16.1$).

\begin{th.}\label{TLLBQ}
Fixons une partie convexe bornée $C \subseteq \R^d$ et  une constante $R>0$. On se donne un point $x \in \supp\nu_{0}$, une suite de vecteurs $(v_{n}) \in (\R^d)^{\N}$  telle que $||v_{n}||_{\Phi_{\mu}} \leq R\sqrt{n \log n}$ pour tout $n \geq 1$. Alors on a la convergence :

$$(2\pi n)^{\frac{d}{2}} e^{\frac{1}{2n} ||v_{n} ||^2_{\Phi_{\mu}}} \mu^{\star n}(g \in \Gamma, \,\,\sigma(g,x) - n \sigma_{\mu} \in C+ v_{n})-   \Pi_{\mu}(C+v_{n}+n \sigma_{\mu} - \widetilde{\varphi}_{0}(x)) \underset{n \to +\infty}{\rightarrow} 0$$ 

De plus, cette convergence est uniforme en le point $x \in \text{supp}\,\nu_{0}$ et en la suite $(v_{n})$ satisfaisant l'hypothèse de domination (avec $R$ fixé au préalable.) 

\end{th.}

\bigskip

\subsection{Taille des morceaux de fibres}

On revient au cadre de la marche sur $\T^d \times \R$. On  se donne  $\mu \in \mathcal{P}(SL_{d}(\Z))$  une probabilité  à support fini engendrant un sous semi-groupe $\Gamma \subseteq SL_{d}(\Z)$ dont l'adhérence de Zariski $G:= \widebar{\Gamma}^Z \subseteq SL_{d}(\R)$ est semi-simple, et $\chi : \Gamma \rightarrow \R$  un morphisme de semi-groupes tel que la probabilité image $\chi_{\star}\mu \in \mathcal{P}(\R)$ est centrée. Pour prouver la dérive exponentielle,  on pourra toujours se ramener au cas où $\mu(e)>0$ (qui implique que $\mu$ est apériodique dans $F:= G/G_{c}$) et $\chi \nequiv 0$. On fera donc ces hypothèses, qui simplifient les énoncés et les preuves dans la suite. 

\bigskip

On reprend les notations de la section $4$. Nous allons  appliquer les rappels précédents à la variété drapeau $\PP:=G/P_{c}$ munie de l'action de $\Gamma$ par translation, les morphismes $s : \Gamma  \rightarrow F$, $f : \PP \rightarrow F$ désignant les projections canoniques dans $F$. L'application $f$ est  bien continue $\Gamma$-équivariante.  

 L'action de $\Gamma$ sur $\PP$ est $\mu$-contractante au dessus de $F$ (cf. \cite{BQRW}, $13.2$), on note $\nu_{\PP} \in \mathcal{P}(\PP)$ l'unique probabilité $\mu$-stationnaire sur $\PP$.    
Soit $\ag^F:= \{x \in \ag, \,\,F.x=x\}$ le sous espace vectoriel de $\ag$ fixé par $F$,  $\sigma : G\times \PP \rightarrow \ag$ le cocycle d'Iwasawa, $\sigma_{F}:= \frac{1}{|F|}\Sigma_{f \in F}f.\sigma : G\times \PP \rightarrow \ag^F$ son projeté sur $\ag^F$.  On introduit 
$$\sigma_{F,\chi} : \Gamma\times \PP \rightarrow \ag^F \times \R, \,(g,x) \mapsto(\sigma_{F}(g,x), \chi(g))$$
 Alors $\sigma_{F,\chi}$ est un cocycle continu dont les fonctions $(\sigma_{F,\chi})_{\sup}$ et $(\sigma_{F,\chi})_{lip}$ sont $\mu$-presque sûrement bornées (car c'est le cas pour $\sigma$, cf \cite{BQRW}, $13.1$). De plus,  comme le vecteur de Lyapunov $\sigma_{\mu}:= \mu \otimes \nu_{\PP}( \sigma) \in \ag^F$ est $F$-invariant et comme la probabilité $\chi_{\star} \mu$ est centrée,  la moyenne $(\sigma_{F,\chi})_{\mu} := \mu \otimes \nu_{\PP}( \sigma_{F,\chi})$ de $\sigma_{F,\chi}$ est donnée par  $(\sigma_{F,\chi})_{\mu} = (\sigma_{\mu}, 0)$.

\bigskip

On va prouver un théorème local limite pour $\sigma_{F, \chi}$. La première étape sera de démontrer le lemme suivant :

\begin{lemme} \label{EDelta}
Pour le cocycle $\sigma_{F,\chi}: G \times \PP \rightarrow \ag^F \times \R$, nous avons :
\begin{itemize}
\item $E_{\mu}= \ag^F \times\R$.
\item Si $\widebar{\chi(\Gamma)}=\R$, alors $\Delta_{\mu}=\ag^F \times\R$. 

\item Si $\chi(\Gamma) \subseteq \R$ est discret, alors $\Delta_{\mu}=\ag^F \times k \chi(\Gamma)$ pour un certain $k \geq 1$. 

\end{itemize}
\end{lemme}

\bigskip

 Notons $l := \dim_{\R} \ag^F$, fixons une fois pour toute $\Pi_{\mu} \in \mathcal{M}^{Rad}(\ag^F \times \R)$ une mesure de Haar sur $\ag^F \times \widebar{\chi(\Gamma)}$, ainsi qu'une norme sur $\ag$,. Nous déduirons du lemme précédent  que quitte à normaliser $\Pi_{\mu}$, on a un théorème local limite avec déviations modérées pour le cocycle $\sigma_{F, \chi}$ :
 
 \bigskip
 
\begin{th.}[TLL pour $\T^d \times \R$ ] \label{TLL}

Fixons des parties convexes bornées $U \subseteq \ag^F$, $I \subseteq \R$  et une constante $R>0$. On se donne un point $x \in \text{supp}\, \nu_{\PP}$, et des suites $(u_{n}) \in (\ag^F)^\N$, $(t_{n}) \in \R^{\N}$  telles que $||a_{n}||, |t_{n}| \leq R\sqrt{n \log n}$ pour tout $n \geq 1$. Alors on a la convergence :

$$(2\pi n)^{\frac{l+1}{2}} e^{\frac{1}{2n} ||(u_{n},t_{n}) ||^2_{\Phi_{\mu}}} \mu^{\star n}(g \in \Gamma, \,\, \sigma_{F}(g,x) - n \sigma_{\mu} \in U+u_{n},\,\,\,\chi(g) \in I + t_{n}) - \Pi_{\mu}(U \times I+t_{n}) \underset{n \to +\infty}{\rightarrow} 0$$ 

De plus, cette convergence est uniforme en le point $x \in \text{supp}\, \nu_{\PP}$ et en les suites $(u_{n}), (t_{n})$ satisfaisant l'hypothèse de domination (avec $R$ fixé au préalable.) 

\end{th.}

\newpage

\subsubsection*{Preuve du \cref{EDelta}}

On montre d'abord que l'hypothèse  $\mu(e)>0$ simplifie la caractérisation de $\Delta_{\mu}$. C'est un phénomène général, on reprend donc momentanément les notations de $7.1$. 

\begin{lemme}[``$v=0$, $g \in \Gamma$'']\label{redDelta}
Supposons que $\supp \mu$ engendre $\Gamma$ comme semi-groupe, et $\mu(e)>0$. Alors $\Delta_{\mu}$ est le plus petit sous-groupe fermé $\Delta \subseteq  \R^d$ tel qu'il existe une application continue $\varphi : \supp \nu_{0} \rightarrow \R^d/\Delta$ vérifiant pour tout $g \in \Gamma$, $x \in \supp \nu_{0}$ : 
$$\sigma(g,x) \mod \Delta =  \varphi(x) - \varphi(gx)$$
 
\end{lemme}

\begin{proof}
Il est clair qu'un tel sous groupe $\Delta$ vérifie la propriété pour laquelle $\Delta_{\mu}$ est un  minorant global (définie dans le rappel). On a donc $\Delta \supseteq \Delta_{\mu}$. Par ailleurs, $\Delta_{\mu}$ lui même vérifie la propriété du lemme. En effet, par définition,  il existe une fonction continue $\varphi : \supp \nu_{0} \rightarrow \R^d/\Delta_{\mu}$  et une constante $v \in \R^d/\Delta_{\mu}$ pour lesquels, pour tout $g \in \supp \mu$, $x \in \supp \nu_{0}$ on a $$\sigma(g,x) \mod \Delta_{\mu} = v + \varphi(x) - \varphi(gx)$$
Appliqué en $g=e \in \supp \mu$, on obtient que $v=0$. On en déduit alors par la propriété de cocycle et le fait que $\supp \nu_{0}$ est $\Gamma$-invariant que $\sigma(g,x) \mod \Delta =  \varphi(x) - \varphi(gx)$ pour tout $g \in \Gamma$, $x \in \supp \nu_{0}$
\end{proof}

\bigskip

On revient au cadre  de la section $7.2$. En particulier, $\sigma$ désigne le cocycle d'Iwasawa. 
\begin{lemme}[Apériodicité d'un projeté du cocycle d'Iwasawa] \label{apsigmap}

Soit $\mathfrak{b} \subseteq \ag$ un sous-espace vectoriel et $p : \ag \rightarrow \mathfrak{b}$ un projecteur sur $\mathfrak{b}$. Alors pour le cocycle $p \circ \sigma : G \times \PP \rightarrow \mathfrak{b}$, nous avons $$\Delta_{\mu}(p \circ \sigma)= E_{\mu}(p \circ \sigma)= \mathfrak{b}$$ 

C'est en particulier le cas pour le cocycle projeté $\sigma_{F}$. 
\end{lemme}

\begin{proof}
Remarquons que $p \circ \sigma$ est bien un cocycle continu dont les fonctions $(p \circ \sigma)_{lip}$ et $(p \circ \sigma)_{\sup}$ sont $\mu$-presque sûrement bornées. Les notations  $\Delta_{\mu}(p \circ \sigma)$ et $E_{\mu}(p \circ \sigma)$ ont donc bien un sens. Le reste est une conséquence de l'apériodicité du cocycle d'Iwasawa (voir \cite{BQII}, proposition $17.1$). En effet, soit $\Delta \subseteq \mathfrak{b}$  un sous groupe fermé et $\varphi : \supp \nu_{\PP} \rightarrow \mathfrak{b}/\Delta$  une fonction continue telle que pour tout $g \in \supp \Gamma$, $x \in \supp \nu_{\PP}$ on a 
$$p \circ \sigma(g,x) \mod \Delta = \varphi(x) - \varphi(gx)$$

Alors, pour tout $g \in \supp \Gamma$, $x \in \supp \nu_{\PP}$ on a 
$$ \sigma(g,x) \mod \text{Ker }p +\Delta = \varphi(x) - \varphi(gx)$$

où on voit $\varphi$ comme une fonction à valeurs dans $\ag/(\text{Ker }p + \Delta) \equiv \mathfrak{b}/\Delta$. 
On en déduit que $\text{Ker }p + \Delta \supseteq \Delta(\sigma)= \ag$ puis que $\Delta= \mathfrak{b}$. 

\end{proof}

\bigskip

\begin{lemme} \label{Delta}
On a l'inclusion : $\ag^F\times \{0\} \subseteq \Delta_{\mu} $
\end{lemme}

\begin{proof}

L'idée est de restreindre $\sigma_{F,\chi}$ en un cocycle pour lequel $\chi=0$ puis d'utiliser l'apériodicité d'un projeté du cocycle d'Iwasawa. La difficulté est que $\Gamma$ n'est pas un groupe à priori mais seulement un semi-groupe, si bien que l'ensemble d'annulation $\{\chi=0\}$ pourrait être trivial, et ce même en sachant que $\Gamma$ est Zariski-dense dans $G$ semi-simple (considérer par exemple : $G=SL_{2}(\R)$, $\Gamma \subseteq SL_{2}(\R)$ semi-groupe Zariski-dense, librement engendré par deux éléments $a_{0},a_{1}$ et définir $\chi$ tel que $\chi(a_{0})=1$, $\chi(a_{1})=-\sqrt{2}$).

Pour contourner cette difficulté, on note $\{g_{1},\dots, g_{s}\}:=\supp\mu $, on introduit $L:= \langle a_{1}, \dots, a_{s} \rangle$ le groupe libre à $s$ générateurs $a_{1}, \dots, a_{s}$, on  pose $\mu_{L} \in \mathcal{P}(L)$ la probabilité sur $L$ donnée par $\mu_{L}(a_{i})= \mu(g_{i})$ et on fait agir $L$ sur la variété drapeau $\F$ via le morphisme de groupes  $L \rightarrow G, a_{i} \mapsto g_{i}$. Comme les marches sur $\PP$ données par $(L, \mu_{L})$ et $(\Gamma, \mu)$ coïncident, on a en particulier que $\Delta_{\mu_{L}}= \Delta_{\mu}$. Dans la suite, on notera abusivement $\Gamma \subseteq L$ le sous semi-groupe de $L$ engendré par $a_{1}, \dots, a_{s}$,  $\langle \Gamma \rangle = L$ et $\chi : \langle \Gamma \rangle \rightarrow \R$ le morphisme défini par $\chi(a_{i})= \chi(g_{i})$.

D'après le  \cref{redDelta}, il existe une application continue $\varphi : \supp \nu_{\PP} \rightarrow (\ag^F\times \R)/\Delta_{\mu}$ vérifiant pour tout $g \in \Gamma$, $x \in \supp \nu_{\F}$ : 
\begin{equation*}
(\sigma_{F}(g,x), \chi(g)) \mod \Delta_{\mu} =  \varphi(x) - \varphi(gx) \tag{$\ast$} 
\end{equation*}

En rempla\c cant $x$ par $g^{-1}x $, on obtient 
$$(-\sigma_{F}(g^{-1},x), \chi(g)) \mod \Delta_{\mu} =  \varphi(g^{-1}x) - \varphi(x)$$
puis, $\Delta_{\mu}$ étant stable par passage à l'opposé :
$$(\sigma_{F}(g^{-1},x), \chi(g^{-1})) \mod \Delta_{\mu} =  \varphi(x) -\varphi(g^{-1}x)$$

Ainsi la relation $(\ast)$ est également satisfaite pour $g^{-1}$, puis finalement pour tout $g \in \langle \Gamma \rangle$, $x \in \supp \nu_{\PP}$. Considérons le sous groupe $\Gamma_{0}:=\{\chi = 0\} \subseteq \langle \Gamma \rangle$. Il contient le sous groupe dérivé $[\langle \Gamma \rangle, \langle \Gamma \rangle]$. Sa réalisation dans $G$  (via le morphisme $L \rightarrow G$) a donc pour adhérence de Zariski un sous groupe $G_{0}$ qui contient  $[G, G]$, et est en particulier d'indice fini dans $G$ (qui est semi-simple). On se donne une probabilité $\mu_{0} \in \mathcal{P}(\Gamma_{0})$ à support fini  engendrant un sous groupe Zariski dense dans $G_{0}$, et apériodique dans $F_{0}:= G_{0}/G_{c}$. La marche $(\Gamma_{0}, \mu_{0})$ est ainsi contractante sur $\PP_{0}:= G_{0}/P_{c}$ au dessus de $F_{0}$, d'unique probabilité stationnaire notée $\nu_{\PP_{0}} \in \mathcal{P}(\PP_{0})$.  On note $\Delta_{\mu_{0}} \subseteq \ag^F$ l'image résiduelle du cocycle $(\sigma_{F,\chi})_{|\Gamma_{0}\times \PP_{0}}$.  Comme $\supp \nu_{\PP} \subseteq \PP$ est $\langle \Gamma \rangle$-invariant, il est en particulier $\Gamma_{0}$-invariant, donc $\supp\nu_{\PP_{0}} \subseteq \supp \nu_{\PP}$. La relation $(\ast)$ entraine alors l'inclusion :
$\Delta_{\mu_{0}} \subseteq \Delta_{\mu}$.

Il suffit donc de vérifier que $\Delta_{\mu_{0}} =\ag^F\times \{0\} $ pour terminer la preuve. Comme $\text{Im}(\sigma_{F,\chi})_{|\Gamma_{0}\times \PP_{0}} \subseteq \ag^F\times \{0\}$, on a par  minimalité que $\Delta_{\mu_{0}} \subseteq \ag^F\times \{0\}$, si bien que $\Delta_{\mu_{0}}=\Delta \times \{0\}$ où $\Delta \subseteq \ag^F$ est un sous groupe fermé de $\ag^F$. Mais $\Delta$  contient l'image essentielle de $(\sigma_{F})_{ | \Gamma_{0}\times \PP_{0}}$ pour la $\mu_{0}$-marche, donc $\Delta = \ag^F$ d'après le \cref{apsigmap}, ce qui conclut.

\end{proof}

\bigskip

\begin{cor}
 $E_{\mu}= \ag^F \times \R$. 
\end{cor}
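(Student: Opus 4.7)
L'id\'ee est d'exploiter le \cref{Delta} conjointement \`a l'inclusion g\'en\'erale $\Delta_\mu \subseteq E_\mu$. Puisque $E_\mu$ est un sous-espace vectoriel r\'eel de $\ag^F \times \R$ contenant le sous-groupe $\Delta_\mu$, lequel contient d\'ej\`a $\ag^F \times \{0\}$ d'apr\`es \cref{Delta}, on obtient imm\'ediatement $\ag^F \times \{0\} \subseteq E_\mu$. Il ne reste donc qu'\`a v\'erifier que la projection $p_2 : \ag^F \times \R \to \R$ envoie $E_\mu$ sur $\R$ tout entier : on concluera alors par sommation.

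Pour cela, on fixe un cocycle sp\'ecial $\sigma_0$ cohomologue \`a $\sigma_{F,\chi}$, \'ecrit sous la forme $\sigma_0(g,x) = \sigma_{F,\chi}(g,x) + \psi(x) - \psi(gx)$ pour une certaine application continue $\psi = (\psi_1,\psi_2) : \supp \nu_{\PP} \to \ag^F \times \R$. Comme $(\sigma_{F,\chi})_\mu = (\sigma_\mu, 0)$ (cette moyenne ayant bien une seconde coordonn\'ee nulle par centrage de $\chi_\star \mu$), la seconde coordonn\'ee d'un g\'en\'erateur $\sigma_0(g,x) - (\sigma_{F,\chi})_\mu$ de $E_\mu$ vaut $\chi(g) + \psi_2(x) - \psi_2(gx)$. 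Supposons par l'absurde que $p_2(E_\mu) = \{0\}$ : alors pour tout $g \in \supp \mu$ et tout $x \in \supp \nu_{\PP}$, on aurait l'identit\'e $\chi(g) = \psi_2(gx) - \psi_2(x)$. La relation de cocycle jointe \`a la $\Gamma$-invariance de $\supp \nu_{\PP}$ (celui-ci \'etant stable sous l'action des g\'en\'erateurs de $\Gamma$ par $\supp \mu$) permet alors d'\'etendre cette \'egalit\'e \`a tout $g \in \Gamma$.

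La contradiction est imm\'ediate : $\psi_2$ est continue sur le compact $\supp \nu_{\PP}$ donc born\'ee, ce qui forcerait $\chi$ \`a \^etre born\'e sur $\Gamma$ ; mais l'hypoth\`ese $\chi \nequiv 0$ combin\'ee au fait que $\chi$ est un morphisme de semi-groupes fournit un $g_0 \in \Gamma$ avec $\chi(g_0) \neq 0$, et alors $\chi(g_0^n) = n \chi(g_0) \to \pm \infty$. On conclut $p_2(E_\mu) = \R$, puis $E_\mu = \ag^F \times \R$. L'obstacle principal \'etait contenu dans le \cref{Delta}; ce corollaire ne requiert plus qu'un argument \'el\'ementaire de bornitude pour $\chi$.
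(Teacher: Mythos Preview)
Your proof is correct and follows the same route as the paper: use \cref{Delta} together with $\Delta_\mu \subseteq E_\mu$ to obtain $\ag^F \times \{0\} \subseteq E_\mu$, then rule out $E_\mu = \ag^F \times \{0\}$ via the non-triviality of $\chi$. Your treatment is in fact slightly more careful than the paper's, since you explicitly account for the coboundary $\psi_2$ coming from the passage to the special cocycle $\sigma_0$, whereas the paper writes the generators of $E_\mu$ directly in terms of $\sigma_{F,\chi}$ and concludes immediately from $\chi \nequiv 0$.
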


\begin{proof}
Rappelons que $$E_{\mu} :=\text{Vect}_{\R}\{(\sigma_{F}(g,x) - \sigma_{\mu}, \chi(g)), \,\,g \in \text{supp}\,\mu, \,x\in \text{supp}\, \nu_{\PP}\}$$
C'est un sous espace vectoriel  de $\ag^F \times \R$ qui contient $\Delta_{\mu}$ donc $\ag^F \times \{0\}$. Par ailleurs, comme $\chi$ est non trivial par hypothèse, $E_{\mu}$ n'est pas inclus dans  $\ag^F \times \{0\}$. Finalement  $E_{\mu}= \ag^F \times \R$. 
\end{proof}

\bigskip

\begin{cor} \label{Delta2}
 \item Si $\widebar{\chi(\Gamma)}=\R$, alors $\Delta_{\mu}=\ag^F \times\R$. 

\item Si $\chi(\Gamma) \subseteq \R$ est discret, alors $\Delta_{\mu}=\ag^F \times k \chi(\Gamma)$ pour un certain $k \geq 1$. 

\end{cor}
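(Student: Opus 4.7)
L'idée est d'abord d'utiliser le \cref{Delta} pour ramener l'identification de $\Delta_\mu$ à celle d'un sous-groupe fermé de $\R$, puis d'exploiter la contractivité de la marche sur $\PP$ au-dessus de $F$ pour contraindre $\chi(\Gamma)$ modulo ce sous-groupe. Comme $\ag^F \times \{0\} \subseteq \Delta_\mu$ et que $\Delta_\mu$ est fermé, on a $\Delta_\mu = \ag^F \times H$ pour un sous-groupe fermé $H \subseteq \R$. La cocompacité $\Delta_\mu \subseteq E_\mu = \ag^F \times \R$ entra\^ine que $H$ est cocompact dans $\R$, donc $H = \R$ ou $H = \beta\Z$ avec $\beta > 0$. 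De plus, en prenant $\ag^F \times \overline{\chi(\Gamma)}$ et $\varphi \equiv 0$ comme candidat (qui satisfait trivialement la relation de cobord puisque $\chi(g) \in \overline{\chi(\Gamma)}$), la minimalité de $\Delta_\mu$ livre déjà l'inclusion $H \subseteq \overline{\chi(\Gamma)}$.

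L'étape-clef consiste à montrer que l'image de $\chi(\Gamma)$ dans $\R/H$ est un sous-groupe fini. Soit $\varphi : \supp \nu_\PP \to \R/H$ la fonction continue associée à $\Delta_\mu$ (dont l'existence provient du \cref{redDelta} et de l'hypothèse $\mu(e)>0$), vérifiant $\chi(g) \equiv \varphi(x) - \varphi(gx) \pmod H$ pour tout $g \in \Gamma$, $x \in \supp \nu_\PP$. L'action de $\Gamma$ sur $\PP$ étant $\mu$-contractante au-dessus de $F$, pour $\mu$-presque toute trajectoire $b$ et tous $x,y \in \supp \nu_\PP$ dans une m\^eme fibre de $f$, on a $d(b_1 \dots b_n x, b_1 \dots b_n y) \to 0$. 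En appliquant la relation de cobord à $g = b_1\dots b_n$ pour $x$ puis pour $y$ et en soustrayant, on obtient $\varphi(x) - \varphi(y) \equiv \varphi(b_1 \dots b_n x) - \varphi(b_1 \dots b_n y) \pmod H$, qui tend vers $0$ par continuité uniforme de $\varphi$ sur le compact $\supp \nu_\PP$. Ainsi $\varphi$ est constante modulo $H$ sur chaque fibre de $f$ rencontrant $\supp \nu_\PP$, donc se factorise en $\varphi = \tilde\varphi \circ f$ avec $\tilde\varphi : F \to \R/H$. En évaluant la relation de cobord en $f(x) = e$, il vient $\chi(g) \equiv \tilde\varphi(e) - \tilde\varphi(s(g)) \pmod H$ pour tout $g \in \Gamma$ ; ceci réalise $\chi(\Gamma) \mod H$ comme l'image d'un morphisme $F \to \R/H$, donc comme un sous-groupe fini de $\R/H$ de cardinal au plus $|F|$.

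On conclut séparément dans chaque cas. Si $\overline{\chi(\Gamma)} = \R$, l'image de $\chi(\Gamma)$ dans $\R/H$ est à la fois dense et finie dans $\R/H$, ce qui force $\R/H$ à \^etre fini ; mais $\R/H$ est un groupe topologique connexe comme quotient de $\R$, il est donc trivial, et $H = \R$. Si $\chi(\Gamma) = \alpha\Z$ est discret avec $\alpha>0$, alors $H \subseteq \alpha\Z$ est discret cocompact dans $\R$, donc de la forme $H = k\alpha\Z = k\chi(\Gamma)$ pour un entier $k \geq 1$. L'étape la plus délicate est la factorisation de $\varphi$ par la projection $f : \PP \to F$, qui articule la contractivité $\mu$-presque s\^ure au-dessus de $F$, la continuité uniforme de $\varphi$ sur le compact $\supp\nu_\PP$ et l'invariance de $\supp\nu_\PP$ sous $\Gamma$ ; une fois cette factorisation acquise, l'analyse en $F$ est élémentaire.
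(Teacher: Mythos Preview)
Ta preuve est correcte et suit la m\^eme architecture globale que celle du papier~: r\'eduire $\Delta_\mu$ \`a $\ag^F \times H$ avec $H$ ferm\'e cocompact dans $\R$, montrer que $\varphi$ est constante sur chaque fibre de $f : \PP \to F$ au sein de $\supp\nu_\PP$, puis conclure que $\chi(\Gamma) \bmod H$ est fini. La diff\'erence porte sur l'\'etape-clef o\`u tu \'etablis cette constance de $\varphi$ sur les fibres. Le papier passe par un d\'etour alg\'ebrique~: il relève la relation au groupe libre $L$ pour disposer des inverses, remarque que $\varphi$ est invariante sous $[L,L]$, puis invoque que le sous-groupe d\'eriv\'e $[\Gamma'_c,\Gamma'_c]$ est Zariski-dense dans $G_c$ et agit minimalement sur l'ensemble limite $\Lambda_{\Gamma'_c}$, ce qui force $\varphi$ \`a \^etre constante sur chaque composante $\Lambda_{\Gamma'_c} f$. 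Tu court-circuites tout cela en exploitant directement la $\mu$-contractivit\'e au-dessus de $F$~: la relation de cobord donne $\varphi(x)-\varphi(y) \equiv \varphi(b_1\cdots b_n x)-\varphi(b_1\cdots b_n y)$, et la contraction des fibres combin\'ee \`a la continuit\'e uniforme de $\varphi$ fait tendre le membre de droite vers $0$. C'est plus \'economique~: tu n'utilises que la contractivit\'e, qui est de toute fa\c con une hypoth\`ese de travail dans la section~7.2, l\`a o\`u le papier mobilise en sus la th\'eorie des ensembles limites et la densit\'e de Zariski des groupes d\'eriv\'es. En contrepartie, l'argument du papier est purement alg\'ebrique et ne repose pas sur la dynamique al\'eatoire, ce qui pourrait s'av\'erer utile dans des contextes o\`u la contractivit\'e n'est pas disponible.
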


\begin{proof}
$\Delta_{\mu}$ est un sous groupe fermé cocompact de $E_{\mu}=\ag^F \times \R$ et contient $\ag^F \times \{0\}$. Il est donc de la forme  $\Delta_{\mu} =\ag^F \times \R$ ou  $\Delta_{\mu}=\ag^F \times c\Z$ pour un $c>0$. Comme $\Delta_{\mu} \subseteq \ag^F \times \widebar{\chi(\Gamma)}$ par minimalité, on obtient directement l'expression de $\Delta_{\mu}$ lorsque $\chi(\Gamma)$ est discret. On traite maintenant le cas où $\chi(\Gamma)$ est dense dans $\R$ en supposant par l'absurde que $\Delta_{\mu}=\ag^F \times c\Z$ pour un $c>0$. 

Par définition de $\Delta_{\mu}$, il existe une fonction continue $\varphi : \supp \nu_{\PP} \rightarrow \R/c\Z$ telle que pour tout $g \in \Gamma$, $x \in \supp \nu_{\F}$
$$ \chi(g) \mod c\Z  =  \varphi(x) - \varphi(gx) $$ 

On enrichit cette égalité en reprenant le groupe $L$ introduit dans la preuve du \cref{Delta}. Comme montré alors, la  relation précédente est  valable pour tout $g \in L$, $x\in  \supp \nu_{\PP}$. On obtient en particulier  pour tout $g \in [L, L]$, $x \in \supp \nu_{\PP}$, l'égalité $\varphi(x)= \varphi(gx)$. 

On en déduit que  $\varphi$ est constante sur chaque composante connexe de $\PP$.
Considérons le sous groupe $\Gamma'_{c}:= \langle \Gamma \cap G_{c} \rangle$ de $G_{c}$  généré par $\Gamma \cap G_{c}$ et son groupe dérivé $[\Gamma'_{c}, \Gamma'_{c}] \subseteq \Gamma'_{c}$. Le paragraphe précédent implique que $\varphi$ est invariante sous l'action de $[\Gamma'_{c}, \Gamma'_{c}]$. On se ramène ainsi à vérifier que $[\Gamma'_{c}, \Gamma'_{c}]$ agit minimalement sur chaque composante $f \cap \supp \nu_{\PP}$ où  $f \in F$. Notons $\PP_{c}:= G_{c}/P_{c}$ et $\Lambda_{\Gamma'_{c}} \subseteq \PP_{c}$ l'ensemble limite associé au sous groupe Zariski-dense $\Gamma'_{c} \subseteq G_{c}$. On trouve dans \cite{BQII} la description suivante : $ \supp \nu_{\PP} = \amalg_{f \in F} \Lambda_{\Gamma'_{c}}f$.  Par ailleurs $[\Gamma'_{c}, \Gamma'_{c}]$ est aussi Zariski-dense dans $G_{c}$ (en se rappelant que $G$ est semi-simple), et a même ensemble limite que $\Gamma'_{c}$ (étant distingué dans $\Gamma'_{c}$). Il agit donc minimalement sur les composantes $f \cap \supp \nu_{\PP}= \Lambda_{\Gamma'_{c}}f$ ce qui prouve le résultat annoncé.
 
Finalement, on conclut que $\chi$ est à valeurs dans un nombre fini de translatés de $c\Z$ ce qui contredit la densité de $\chi(\Gamma)$ dans $\R$.

\end{proof}

\bigskip

\subsubsection*{Preuve du \cref{TLL}}

L'enjeu de la preuve est de montrer qu'on peut choisir  la fonction $\widetilde{\varphi}_{0}$ qui apparait dans le théorème local limite à valeurs dans $\ag^F \times \widebar{\chi(\Gamma)}$ et de sorte que la mesure $\Pi_{\mu}$ qu'elle induit est une mesure de Haar sur $\ag^F \times \widebar{\chi(\Gamma)}$. On conclut alors par une  application directe du \cref{TLLBQ}, avec $C:= U \times I$ et $v_{n}:= u_{n}+t_{n}$.   

\bigskip

\emph{Supposons $\chi(\Gamma)$  dense dans $\R$}.  D'après le \cref{EDelta}, on peut choisir la fonction $\widetilde{\varphi}_{0}$ identiquement nulle, et la mesure $\Pi_{\mu}$ induite coïncide alors  avec $\pi_{\mu}$, i.e. est la mesure de Haar sur $\ag^F \times \R$ qui donne masse $1$ aux cubes unités de $\Delta_{\mu,0}$. 
\bigskip

\emph{Supposons $\chi(\Gamma)$ discret dans $\R$}. On peut supposer $\chi(\Gamma)=\Z$.  On commence par construire une bonne fonction $\widetilde{\varphi}_{0}$.  Le \cref{EDelta}, combiné au  \cref{redDelta}, donne l'existence d'une fonction continue $\varphi : \supp \nu_{\PP}\rightarrow \R/k\Z$ telle que pour $g \in \supp \Gamma$, $x \in \supp \nu_{\PP}$ on a $\chi(g) \mod k\Z= \varphi(x)-\varphi(gx)$.

En raisonnant comme dans la preuve du \cref{Delta2}, on constate que $\varphi$ est constante sur chaque composante $f \cap \supp \nu_{\PP}$ où  $f \in F$. On peut donc relever $\varphi$ en une fonction  $\widetilde{\varphi}: \PP \rightarrow \R$ localement constante telle que pour tout $g \in \Gamma$, $x \in \PP$, on ait $\chi(g) \mod k\Z= \widetilde{\varphi}(x)-\widetilde{\varphi}(gx) \mod k \Z$. Quitte à imposer $\widetilde{\varphi}_{|\PP_{c}}=0$,  on peut supposer $\widetilde{\varphi}$ à valeurs dans $\chi(\Gamma)=\Z$. On pose $\widetilde{\varphi}_{0}: \PP \rightarrow \ag^F \times \R, \,x \mapsto (0, \widetilde{\varphi}(x))$, fonction localement constante telle que pour tout $g \in G$, $x \in \PP$, $$\sigma_{F, \chi}(g,x) \mod \Delta_{\mu} =  \widetilde{\varphi}_{0}(x)-\widetilde{\varphi}_{0}(gx) \mod \Delta_{\mu}$$ 

Notons $\text{leb}_{\ag^F}$ la mesure de Lebesgue sur $\ag^F$ donnant masse $1$ aux cubes unités de $\Phi_{\mu |\ag^F \times \{0\}}$. On a par définition $\pi_{\mu}:= \leb_{\ag^F} \otimes \sum_{j \in k \Z} \delta_{j}$, puis $\Pi_{\mu} = \leb_{\ag^F} \otimes \frac{1}{|F|} \sum_{f \in F} \sum_{j \in k\Z} \delta_{j-\widetilde{\varphi}(f)}$. 

De plus, pour $g \in \Gamma$, on a
\begin{align*}
\Pi_{\mu}\{. - (0,\chi(g)) \}&= \leb_{\ag^F} \otimes \frac{1}{|F|} \sum_{f \in F} \sum_{j \in k\Z} \delta_{j-\widetilde{\varphi}(f) + \chi(g)}\\
&=\leb_{\ag^F} \otimes \frac{1}{|F|} \sum_{f \in F} \sum_{j \in k\Z} \delta_{j-\widetilde{\varphi}(gf)}\\
&= \Pi_{\mu}
\end{align*}
 La mesure $\Pi_{\mu}$ est donc $\chi(\Gamma)$-invariante, i.e. de la forme $\Pi_{\mu}= c\,\leb_{\ag^F} \otimes \sum_{j \in \Z} \delta_{j}$ pour un $c>0$.

\begin{proof}[]
\end{proof}


\newpage

\section{Loi des angles et contrôle de la dérive}

Reprenons le schéma de preuve de la dérive exponentielle expliqué à la fin de la section $4$.  On dispose de points $c = (b,z, O, x), c_{p}=(b,z, O, x+ u_{p}) \in B^+$ où $u_{p} \in \R^d$, est très petit, et agit sur la coordonnée en $\T^d$. C'est le terme de dérive.  Pour $n \geq 0$, nous avons vu dans la section précédente que les  $n$-fibres passant par $c$ et $c_{p}$ sont paramétrées par $\Gamma^n$. Etant donné $a \in \Gamma^n$, les points des $n$-fibres correspondants diffèrent seulement d'un nouveau terme de dérive, donné par : $D_{n,c, c_{p}}(a) = a_{1}\dots a_{n} b^{-1}_{n}\dots b^{-1}_{1} u_{p} \in \R^d$. L'objectif de cette section est de contrôler la norme et la direction de la dérive pour $n$ grand,  lorsqu'on ne voit les fibres qu'à travers une fenêtre $W \subseteq B^+$ de mesure finie. On montrera notamment que la norme croît exponentiellement avec $n$, justifiant le nom de dérive exponentielle.

\bigskip

La section (8.1) introduit une notion de points de densité et explique comment ils conditionnent la croissance de la norme et la contraction des directions dans une représentation. 

La section (8.2) décrit la loi asymptotique de ces points de densité. 

La section (8.3) est un préliminaire technique à la section (8.4).

La section (8.4) décrit la loi asymptotique des points de densité le long des morceaux de fibres : c'est la loi des angles.

La section (8.5) décrit la divergence de la projection de Cartan le long des morceaux de fibres.

La section (8.6) conclut en démontrant les théorèmes de contrôle annoncés.

\bigskip
A partir de la section $(8.4)$, les énoncés deviendront plus complexes. Pour les alléger, on utilisera la notation ``\emph{Soit $c \in B^+$ $\beta^+$-typique}'' pour signifier que les propriétés qui suivent sont valables pour $\beta^+$-presque tout point $c\in B^+$.

\bigskip

\subsection{Points de densité}

Soit $G$ un sous groupe algébrique réel semi-simple (implicitement linéaire, on spécifiera plus tard $G= \widebar{\Gamma}^Z \subseteq SL_{d}(\R)$). On se donne pour $G$ des objets $K$, $\mathfrak{a}$, $A$, $\Sigma$, $\Pi$, $P$, $\PP=G/P_{c}$, etc. comme dans les rappels algébriques de la section $4$. On réalise $G$ via un plongement $G \hookrightarrow SL(V_{0})$, où $V_{0}$ est un $\R$-espace vectoriel de dimension finie.  On peut munir $V_{0}$ d'un produit scalaire $\langle .,.\rangle$ tel que $K \subseteq O(V_{0})$, $A \subseteq \text{Sym}(V_{0})$ (\cite{BQRW}, $8.9$). En considérant la décomposition de Cartan, on constate que le groupe $G$ est alors stable par transposition : $G =\,^tG$.

Rappelons que la décomposition de Cartan introduite dans la section $4$ est donnée par l'égalité $G= K \exp(\ag^+)K_{c}$. Cependant, nous aurions très bien pu  considérer la décomposition duale $G= K_{c}\exp(\ag^+) K$. Cette dernière fournit une notion de projection de Cartan $\kappa'$ liée à $\kappa$ via  l'action de $F$ : si $g \in G$, $\kappa'(g)= f_{g}.\kappa(g)$ où $G \rightarrow F, f \mapsto f_{g}$ est la projection canonique. Nous allons définir des points de densité pour ces deux décompositions et voir comment ils sont liés par l'action de $F$.

Soit $g \in G$. La décomposition de Cartan permet d'écrire $g$ sous la forme $$g= k^c_{g}m_{g}\exp(x_{g})l^c_{g}$$ où $k^c_{g}, l^c_{g}\in K_{c}$, $m_{g}\in M=K\cap P$, $x_{g} \in \ag^+$. 

On a alors automatiquement une décomposition duale de $g$ et des décompositions de $^tg$ via les égalités :
\begin{align*}
&g=k^c_{g}  \exp(f_{g}.x_{g}) m_{g}l^c_{g}  \\ & ^tg= (l^c_{g})^{-1} m^{-1}_{g} \exp(f_{g}.x_{g}) (k^c_{g})^{-1}  \\ & ^tg= (l^c_{g})^{-1}  \exp(x_{g}) m^{-1}_{g} (k^c_{g})^{-1}
\end{align*}

On choisit alors pour tout $g \in G$, une décomposition $g= k^c_{g}m_{g}\exp(x_{g})l^c_{g}$ comme ci dessus de sorte que la fonction $g \mapsto (k^c_{g},m_{g}, x_{g}, l^c_{g})$ soit mesurable et compatible avec la transposition : $ (k^c_{ ^tg},m_{ ^tg}, x_{^tg},l^c_{^tg}) =(l^c_{g})^{-1}, m^{-1}_{g} , f_{g} .x_{g}, (k^c_{g})^{-1})$. On définit alors les \emph{points de densité} annoncés en notant $\xi_{0}:= P_{c}/P_{c} \in \PP$ le drapeau standard et en posant : 

$$\xi^+_{g}:= k^c_{g}m_{g} \xi_{0},\,\,\,\, \,\,\,\,\xi^-_{g}:= (m_{g}l^c_{g})^{-1}\xi_{0},\,\,  \,\,\,\,\,\, \eta^+_{g}:= k^c_{g}\xi_{0}, \,\,\,\,\,\,\,\, \eta^{-}_{g}:= (l^c_{g})^{-1} \xi_{0}$$

\bigskip

On a $\xi^+_{g}, \xi^-_{g} \in \PP, \eta^+_{g}, \eta^-_{g} \in \PP_{c}= G_{c}/P_{c}$ et les relations $\xi^+_{g}= \eta^+_{g}f_{g}$, $\xi^-_{g}= \eta^-_{g} f^{-1}_{g}$. De plus, la compatibilité des décompositions choisies entraine que $\xi^+_{^tg}=\xi^-_{g}$, $\eta^+_{^tg}= \eta^-_{g}$.

\bigskip

Soit $(V, \rho)$ une représentation algébrique irréductible  de $G_{c}$, $r :=\dim_{\text{prox}}\rho(G_{c})$ sa dimension proximale, $\omega  \in \ag^\star$ son plus haut poids. On note $(W, \rho):= \text{Ind}^G_{G_{c}}(V, \rho)$ sa représentation induite (voir $(4.1)$). On munit $W$  d'un ``bon'' produit scalaire i.e. choisi tel que $\rho(K) \subseteq O(W)$, $\rho(A) \subseteq \text{Sym}(W)$ et les sous espaces $(V^f)_{f \in F}$ décomposant $W$ sont deux à eux orthogonaux. On se donne $g \in G$, $v \in V$ et on veut estimer la norme et la direction $\rho(g)v \in V$. Cela est possible lorsque $v$ est suffisament éloigné d'un sous-espace vectoriel $V^-_{g} \subseteq V$. Plus précisément, on note $V^U:=\{v \in V, \rho(U)v=v\}$ (sous espace propre du plus haut poids $\omega$ de $\rho$, de dimension $r$), et on pose : 
$$W^+_{g}:= \rho(k^c_{g}m_{g}) V^U \in \mathbb{G}_{r}(W),\,\,\,\,\,\,\,\,\,\,\,\,\,\, V^-_{g}:=((l^c_{g})^{-1}V^U)^\perp \cap V \in \mathbb{G}_{n-r}(V)$$

On peut les appeler respectivement \emph{espace attracteur} et \emph{espace répulsif} de $\rho(g)$ dans $W$. Ils sont liés au points de densités $\xi^+_{g}$ et $\eta^-_{g}$ : en notant $\xi \mapsto V_{\xi}$ le morphisme $G$-équivariant de $\PP$ dans $\mathbb{G}_{r}(W)$ vérifiant $V_{\xi_{0}}= V^U$ (voir $4.1$), on a $ W^+_{g}= V_{\xi^+_{g}}$, $V^{-}_{g}= V_{\eta^-_{g}}^\perp \cap V$. On remarquera que les intersections avec $V$ dans les définitions de $V^-_{g}$ sont là pour garantir que $V^-_{g}$ ne rencontre aucun autre sous-espace $V^f\subseteq W$ que $V$, autrement dit on considère l'orthogonal au sein de $V$ et non de $W$.  

\newpage

\begin{lemme}[Croissance et contraction]\label{LA1}
Supposons la représentaiton $\rho(G_{c})$ non bornée dans $SL(V)$ (ou de manière équivalente $r < \dim V$). Alors pour, $g \in G$, $v \in V-\{0\}$, on a : 
\begin{itemize}
\item $ e^{\omega (\kappa(g))} ||v|| d(\R v, V^-_{g})  \leq  ||\rho(g)v|| \leq e^{\omega (\kappa(g))} ||v||$
\item  $d(\rho(g) \R v, W^+_{g}) \leq \frac{1}{d(\R v, V^-_{g})} \max_{\alpha \in \Pi} e^{-\alpha ( \kappa(g))}$
\end{itemize}
\end{lemme}

\bigskip

\begin{rem.}
$1)$ Le terme $e^{\omega (\kappa(g))}$ n'est autre que $||\rho(g)_{|V}||$ (cf. \cref{rep}). Le premier point compare donc $||\rho(g)v||$ et $||\rho(g)_{|V}||\,||v||$.

$2)$ On pourrait formuler un lemme similaire contrôlant $\rho(g)w$ pour $w \in V^{f^{-1}_{g}}$, à l'aide de $V_{\eta^+_{g}}$, $V_{\xi^{-}_{g}}$.

$3)$ La distance sur l'espace projectif $\mathbb{P}(W)$ est donnée par $d(\R w, \R w') := \frac{|| w \wedge w'||}{||w|| ||w'||}$, et si $E \subseteq W$ est un sous-espace vectoriel, on définit $d(\R w, E):= \inf_{\R w' \subseteq E} d(\R w, \R w')$. 

$4)$ Le \cref{LA1} reste vrai lorsqu'on remplace $V$ et $W$ par une même représentation fortement irréductible $V$ de $G$, munie d'un produit scalaire $K$-invariant tel que les éléments de  $A$ sont auto-adjoints. Les espaces attractif et répulsif de $g \in G$ sont définis de même par $V^+_{g}:= V_{\xi^+_{g}} (= V_{\eta^+_{g}})$, $V^{-}_{g}:= V_{\eta^-_{g}}^\perp (=V_{\xi^-_{g}}^\perp)$. La version fortement irréductible se déduit du lemme précédent en utilisant l'application $\Psi$ du \cref{redind}.
\end{rem.}

\bigskip

\begin{proof}

En remarquant que $k_{g}m_{g}$ n'intervient pas dans les inégalités et quitte à remplacer $v$ par $(l^c_{g})^{-1}v$, on pourra supposer que $g = a \in \exp(\ag^+)$. Par abus de notation, on écrira  $V_{\xi_{0}}^\perp$ pour désigner $V_{\xi_{0}}^\perp \cap V$. On décomposera $v \in V$ en $v=v_{0}+v_{0}^\perp$ où $v_{0}\in V_{\xi_{0}}$, $v_{0}^\perp \in V_{\xi_{0}}^\perp $.

Le premier point demande de vérifier que $||\rho(a)v|| \geq e^{\omega(\kappa(a))} \,||v|| d(\R v, V_{\xi_{0}}^\perp)$.  On a $d(\R v, V_{\xi_{0}}^\perp) = \frac{||v_{0}||}{||v||}$. Par ailleurs, en remarquant que  $\rho(a)$ stabilise $V_{\xi_{0}}$ et $V_{\xi_{0}}^\perp$, on obtient $||\rho(a)v|| \geq ||\rho(a)v_{0}|| = e^{\omega(\kappa(a))} ||v_{0}|| = e^{\omega(\kappa(a))} ||v|| d(\R v, V_{\xi_{0}}^\perp )$. Cela prouve le premier point. 

\bigskip

Le deuxième point demande de vérifier que $d(\rho(a) \R v, V_{\xi_{0}}) \leq \frac{1}{d(\R v, V_{\xi_{0}}^\perp)} \max_{\alpha \in \Pi} e^{-\alpha ( \kappa(a))}$. On a $d(\R v, V_{\xi_{0}}^\perp) = \frac{||v_{0}||}{||v||}$
De même, comme  $\rho(a)$ stabilise $V_{\xi_{0}}$ et $V_{\xi_{0}}^\perp$,  on peut écrire   $d(\rho(a) \R v, V_{\xi_{0}})  = \frac{|| \rho(a) v_{0}^\perp||}{||\rho(a)v||} $.  On veut donc montrer que 
$$\frac{|| \rho(a) v_{0}^\perp||}{||\rho(a)v||} \frac{||v_{0}||}{||v||} \leq  \max_{\alpha \in \Pi} e^{-\alpha ( \kappa(a))}$$
Mais $||\rho(a)v|| \geq ||\rho(a)v_{0}|| = e^{\omega(\kappa(a))} ||v_{0}||$ donc $\frac{||v_{0}||}{||\rho(a)v||} \leq e^{-\omega(\kappa(a))}$. Notons $\omega_{i} \in \mathfrak{a}^\star, i \in I$ les poids de la représentation $(V,\rho_{|G_{c}})$ autres que $\omega$. Comme $\rho(G_{c})$ n'est pas borné, on a $r :=\dim_{\text{prox}}\rho(G_{c})<\dim V$ donc $I \neq \emptyset$. Par définition du plus haut poids, on a pour tout $i \in I$, l'écriture $\omega-\omega_{i} =\sum_{\alpha \in \Pi} k_{\alpha,i} \alpha$ où $k_{\alpha,i} \in \N$. Décomposons $v_{0}^\perp$ en vecteurs propres $v_{0}^\perp = \sum_{i \in I} v_{0,i}^\perp$ avec $\rho(a)v_{0,i}^\perp = e^{\omega_{i}(\kappa(a))} v_{0,i}^\perp$ et les $v_{0,i}$ deux à deux orthogonaux (quitte à en choisir certains nuls pour qu'il n'y en ait qu'un par valeur propre).  Alors 
$$||e^{-\omega(\kappa(a) ) }\rho(a)v_{0}^\perp||^2 = \sum_{i\in I} e^{-2(\omega -\omega_{i})(\kappa(a))}||v_{0,i}^\perp ||^2 \leq  \max_{\alpha \in \Pi} e^{-2\alpha ( \kappa(a))} \sum_{i\in I}||v_{0,i}^\perp||^2$$ car $I \neq \emptyset$ et $\kappa(a) \in \ag^+$. Comme $\sum_{i\in I}||v_{0,i}^\perp||^2\leq ||v||^2$, on obtient finalement que $\frac{|| \rho(a) v_{0}^\perp||}{||\rho(a)v||} \frac{||v_{0}||}{||v||} \leq  \max_{\alpha \in \Pi} e^{-\alpha ( \kappa(a))}$, prouvant le point $2$.
\end{proof}

\bigskip

\bigskip

\emph{Dans la suite on revient au cadre de la marche sur $\T^d \times \R$} correspondant à $V_{0}= \R^d$, $G:= \widebar{\Gamma}^Z \subseteq SL(V_{0})$, groupe semi-simple, non borné,  fortement irréductible sur $V_{0}$. Le dernier lemme montre que la dérive $a_{1}\dots a_{n} b^{-1}_{n}\dots b^{-1}_{1} u_{p} \in V_{0}$ est contrôlée par la position de l'espace répulsif $(V_{0})^-_{a_{1}\dots a_{n}}$ par rapport à $b^{-1}_{n}\dots b^{-1}_{1} u_{p}$, et par les évaluations $\alpha(\kappa(a_{1} \dots a_{n})) \in \R_{+}$. La section est essentiellement consacrée à décrire la distribution des espaces répulsifs $(V_{0})^-_{a_{1}\dots a_{n}}$. Cet objectif sera atteint à travers la \emph{loi des angles}. On montrera aussi que les  $\alpha(\kappa(a_{1} \dots a_{n}))$ tendent vers $+\infty$, puis les théorèmes de contrôle annoncés.

\bigskip

\subsection{Convergence des points de densité}

Les notations sont celles des sous-sections $(4.1)$ et $(8.1)$. On reprend le groupe algébrique réel semi-simple $G$ de $(8.1)$, on se donne $\mu \in \mathcal{P}(G)$ une probabilité sur $G$ à support fini et \emph{apériodique} dans $F:=G/G_{c}$. Cette dernière hypothèse signifie que $\supp \mu$ engendre un sous-groupe Zariski dense de $G$ et que son image dans $F$ par la projection naturelle n'est pas inclus dans le translaté d'un sous groupe propre cocyclique de $F$. Elle est par exemple vérifiée si $\mu(e)>0$ (ce que l'on peut supposer pour prouver le \cref{TH0}). On note $B:= G^{\N^\star}$, $\beta:=\mu^{\otimes \N^\star} \in \mathcal{P}(B)$, $T:B \rightarrow B, (b_{i})_{i \geq 1}\mapsto (b_{i+1})_{i \geq 1}$ le shift unilatère. On rappelle que la variété drapeau $\PP:= G/P_{c}$ admet une unique probabilité $\mu$-stationnaire, notée $\nu_{\PP} \in \mathcal{P}(\PP)$, et que ses mesures limites s'écrivent $\beta$-presque sûrement $\nu_{\PP, b}= \frac{1}{|F|}\sum_{f \in F}\delta_{\xi_{b}.f}$ avec $\xi_{b} \in \PP_{c}:= G_{c}/P_{c}$. Nous allons voir que les points de densité $\xi^+_{b_{1}\dots b_{n}}$ approximent bien les drapeaux limites $\xi_{b}.f_{b_{1}\dots b_{n}}$ si $n$ est assez grand. On notera $\xi_{b,f}:=\xi_{b}.f$. 

\begin{lemme} 
\label{convrep} Soit $(V, \rho)$ une représentation irréductible proximale  de $G_{c}$, $(W, \rho):= \emph{Ind}^G_{G_{c}}(V, \rho)$ sa représentation induite. 
Il existe une constante $\varepsilon >0$ et un rang $n_{0}\geq 0$ tels que pour tout $n \geq n_{0}$, on a : $$ \beta(b \in B, \,\, d(V_{\xi^+_{b_{1}\dots b_{n}}}, \,\,\rho(b_{1}\dots b_{n})V_{\xi_{T^nb}}) \leq e^{-\varepsilon n} ) \geq 1- e^{-\varepsilon n}$$
\end{lemme}

\begin{proof}
On peut supposer $\dim V \geq 2$. L'hypothèse de proximalité implique alors que l'image  $\rho(G_{c})$ n'est pas bornée. On peut donc appliquer le \cref{LA1} qui donne la majoration 
$$d(V_{\xi^+_{b_{1}\dots b_{n}}}, \,\,\rho(b_{1}\dots b_{n})V_{\xi_{T^nb}}) \leq \frac{1}{d(V_{\xi_{T^nb}}, V^-_{b_{1}\dots b_{n}})} \max_{\alpha \in \Pi} e^{-\alpha(\kappa(b_{1}\dots b_{n}))}$$

La régularité H\"older des mesures stationnaires sur les espaces projectifs (\cite{BQRW}, section $14.1$, appliquée à $(G_{c}, \mu_{G_{c}})$) affirme que la loi image de $\beta$ par l'application $B \rightarrow \mathbb{P}(V), b \mapsto V_{\xi_{b}}$  est H\"older régulière. Cela signifie qu' il existe des constantes $C,s>0$   telles que pour tout $r>0$, pour tout hyperplan $H \subseteq V$, 
$$\beta(b \in B, \,\,d(V_{\xi_{b}}, H) \leq r) \leq Cr^{s}$$
 En particulier, pour tout $r>0$, on a $$\beta(b \in B, \,\,d(V_{\xi_{T^nb}}, V^-_{b_{1}\dots b_{n}}) > e^{-rn}) \geq 1-Ce^{-rsn} $$

Par ailleurs, le principe des grandes déviations (\cite{BQRW}, $13.6$) affirme que pour $t>0$, il existe un rang $n_{0}\geq 0$ et une constante $\delta >0$ tels que pour $n \geq n_{0}$: 

$$\beta(b \in B, \,\frac{||\kappa(b_{1}\dots b_{n})-n \sigma_{\mu} ||}{n} \leq t)  \geq 1-e^{-\delta n}$$

Rappelons que le vecteur de Lyapunov $\sigma_{\mu}$ est dans l'intérieur de la chambre de Weyl $\ag^+$, autrement dit que $m:=\min_{\alpha\in \Pi} \alpha(\sigma_{\mu})>0$ (\cite{BQRW}, $10.4$). On peut donc choisir le paramètre $r < \frac{1}{3}m$,  et le paramètre  $t>0$ de sorte que pour tout $x \in \ag$ tel que $||x || \leq t$, on a $|\alpha(x)| \leq \frac{1}{3}m$. On déduit alors de  ce qui précède que pour tout $n \geq n_{0}$, 
$$ \beta(b \in B, \,\, d(V_{\xi^+_{b_{1}\dots b_{n}}}, \,\,\rho(b_{1}\dots b_{n})V_{\xi_{T^nb}}) \leq e^{-n \frac{m}{3} } ) \geq 1- Ce^{-rsn}-e^{-\delta n}$$

Cela conduit au résultat en choisissant $\varepsilon>0$ assez petit puis $n_{0} \geq 0$ assez grand.
\end{proof}

\bigskip

\bigskip

On déduit du lemme précédent  l'approximation des drapeaux limites $(\xi_{b,f})_{b \in B, f \in F}$ par les points de densités.

\begin{cor} \label{convdens}
Il existe une constante $\varepsilon >0$ et un rang $n_{0}\geq 0$ tels que pour tout $n \geq n_{0}$, on a :
$$ \beta(b \in B, \,\, \forall p \geq n, \,  d(\xi^+_{b_{1}\dots b_{p}}, \xi_{b, f_{b_{1}\dots b_{p}}}) \leq e^{-\varepsilon p} ) \geq 1- e^{-\varepsilon n}$$
\end{cor}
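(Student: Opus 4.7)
L'id�e est de ramener le contr�le de $d(\xi^+_{b_{1}\dots b_{p}}, \xi_{b, f_{b_{1}\dots b_{p}}})$ dans $\PP$ � des contr�les de type \cref{convrep} dans chacune des repr�sentations proximales de la famille finie $(\rho_{\alpha}, V_{\alpha})_{\alpha \in \Pi}$ qui plonge la vari�t� drapeau. Comme la distance sur $\PP$ a �t� d�finie comme somme des distances coordonn�es $d^f_{\alpha}$ sur les $\mathbb{P}(V^f_{\alpha})$, majorer  $d(\xi^+_{b_{1}\dots b_{p}}, \xi_{b, f_{b_{1}\dots b_{p}}})$ revient �  majorer simultan�ment, pour chaque $\alpha \in \Pi$, la distance $d^{f}_{\alpha}(V_{\alpha, \xi^+_{b_{1}\dots b_{p}}}, V_{\alpha, \xi_{b, f_{b_{1}\dots b_{p}}}})$, ces points appartenant � la m�me composante connexe $V^{f}_{\alpha}$ o� $f= f_{b_{1}\dots b_{p}}$.

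La premi�re �tape est une relation d'�quivariance qui  permet de reconna�tre le second terme. En effet, par �quivariance des mesures limites $\nu_{\PP,b}=\frac{1}{|F|}\sum_{f \in F}\delta_{\xi_{b}f}$ sous l'action de $b_{1}$, on obtient $b_{1}\xi_{Tb}=\xi_{b}f_{b_{1}}=\xi_{b, f_{b_{1}}}$ (le membre de gauche est dans la composante $\PP_{f_{b_{1}}}$ et les atomes de  $\nu_{\PP, b}$ dans cette composante sont indic�s par les �l�ments $f' \in F$ tels que $f'=f_{b_{1}}$). Par it�ration, on a $b_{1}\dots b_{p} \xi_{T^pb} = \xi_{b, f_{b_{1}\dots b_{p}}}$, puis via l'�quivariance $\xi \mapsto V_{\alpha, \xi}$, on obtient $\rho_{\alpha}(b_{1}\dots b_{p})V_{\alpha, \xi_{T^pb}} = V_{\alpha, \xi_{b, f_{b_{1}\dots b_{p}}}}$.

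La deuxi�me �tape est une application du \cref{convrep} (via la repr�sentation induite $W_{\alpha}$) � chacune des $V_{\alpha}$. Fixons un tel $\alpha$. Le \cref{convrep} fournit des constantes $\varepsilon_{\alpha} >0$, $n_{\alpha} \geq 0$ tels que pour tout $p \geq n_{\alpha}$,
$$\beta\bigl(b \in B, \,d^{f_{b_{1}\dots b_{p}}}_{\alpha}(V_{\alpha, \xi^+_{b_{1}\dots b_{p}}},  V_{\alpha, \xi_{b, f_{b_{1}\dots b_{p}}}}) \leq e^{-\varepsilon_{\alpha} p} \bigr) \geq 1 - e^{-\varepsilon_{\alpha} p}.$$
En posant $\varepsilon_{0} := \min_{\alpha \in \Pi} \varepsilon_{\alpha} >0$ et en prenant l'intersection sur les $\alpha \in \Pi$ (union born�e des compl�mentaires), on a pour tout $p \geq \max n_{\alpha}$,
$$\beta\bigl(b \in B, \,d(\xi^+_{b_{1}\dots b_{p}}, \xi_{b, f_{b_{1}\dots b_{p}}}) \leq |\Pi| e^{-\varepsilon_{0} p} \bigr) \geq 1 - |\Pi|e^{-\varepsilon_{0} p}.$$

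La troisi�me �tape consiste � passer du ``pour tout $p$'' fix� au ``pour tout $p \geq n$''  par une borne union. Quitte � r�duire l�g�rement $\varepsilon_{0}$ en un $\varepsilon \in ]0, \varepsilon_{0}[$ pour absorber le facteur $|\Pi|$ et la constante g�om�trique, le compl�mentaire de l'�v�nement recherch� au rang $n$ est major� par $\sum_{p \geq n} |\Pi|e^{-\varepsilon_{0} p} = O(e^{-\varepsilon n})$ pour $n$ assez grand. Cela donne exactement le r�sultat annonc�.

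L'�tape qui demande le plus de vigilance est la premi�re (la relation d'�quivariance $b_{1}\dots b_{p}\xi_{T^pb} = \xi_{b, f_{b_{1}\dots b_{p}}}$), car les drapeaux limites $\xi_{b}$ sont d�finis via l'application de retour $s$ � la composante $G_{c}$ et non via la $\mu$-marche directe ; il faut donc v�rifier soigneusement que l'action de $b_{1}$ agit bien sur les atomes  $(\xi_{b, f})_{f \in F}$ de $\nu_{\PP, b}$ par la translation � droite $f \mapsto f_{b_{1}}f$. Les autres �tapes sont standard.
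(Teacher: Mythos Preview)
Your proposal is correct and follows essentially the same route as the paper: reduce the distance on $\PP$ to the coordinate distances via the proximal representations $(V_{\alpha})_{\alpha\in\Pi}$, apply \cref{convrep} coordinatewise together with the $\beta$-a.s.\ equivariance $b_{1}\dots b_{p}\,\xi_{T^p b}=\xi_{b,f_{b_{1}\dots b_{p}}}$, then pass from a fixed $p$ to all $p\ge n$ by the geometric union bound $\sum_{p\ge n}e^{-\varepsilon_{0}p}$ and absorb the resulting constants into a smaller $\varepsilon$. The only cosmetic difference is that the paper first combines the $V_{\alpha}$-estimates into a single inequality on $\PP$ before doing the union bound, whereas you carry the factor $|\Pi|$ explicitly; and your caveat about the equivariance is well-placed but the paper treats this identity as already established (cf.\ section~4.1, where $\nu_{\PP,b}=\frac{1}{|F|}\sum_{f}\delta_{\xi_{b}f}$ and $b_{1\star}\nu_{\PP,Tb}=\nu_{\PP,b}$).
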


\begin{proof}
On a vu dans $(4.1)$ que la variété drapeau admet un plongement  
$$\PP \hookrightarrow \amalg_{f \in F} \Pi_{\alpha \in \Pi} \mathbb{P}(V^f_{\alpha}),\, \xi \mapsto (V_{\alpha, \xi})$$
 où les $V_{\alpha}$ sont des représentations proximales irréductibles de $G_{c}$, et que la distance sur $\PP$ est définie via ce plongement. En appliquant le lemme précédent aux représentations $V_{\alpha}$,  et en se rappelant que $b_{1}\dots b_{n}\xi_{T^nb}=\xi_{b, f_{b_{1}\dots b_{n}}}$ $\beta$-ps,  on obtient qu'il existe une constante $\varepsilon >0$ et un rang $n_{0}\geq 0$ tels que pour tout $n \geq n_{0}$, on a : 
 $$ \beta(b \in B, \,\, d(\xi^+_{b_{1}\dots b_{n}}, \,\,\xi_{b, f_{b_{1}\dots b_{n}}}) \leq e^{-\varepsilon n} ) \geq 1- e^{-\varepsilon n}$$
 
 Le corollaire en découle directement : pour $n \geq n_{0}$,
 \begin{align*}
 \beta(b \in B, \,\, \exists p \geq n, \,  d(\xi^+_{b_{1}\dots b_{p}}, \xi_{b, f_{b_{1}\dots b_{p}}}) > e^{-\varepsilon p} ) &\leq \sum_{p \geq n} e^{-\varepsilon p }\\
 & = \frac{e^{-\varepsilon n}}{1- e^{-\varepsilon}}
 \end{align*}
d'où résultat en passant au complémentaire (quitte à diminuer la valeur de $\varepsilon>0$, augmenter le rang $n_{0} \geq 0$).

\end{proof}

On en déduit la loi des points de densité $(\xi^+_{b_{1}\dots b_{n}})$ pour $n$ grand.

\begin{cor} \label{convdensmes} Soit $\varphi \in C^0(\PP)$. Alors 

$$\int_{B}\varphi(\xi^+_{b_{1}\dots b_{n}}) d\beta(b) \rightarrow \nu_{\PP}(\varphi)$$

\end{cor}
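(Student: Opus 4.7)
My plan is to use \cref{convdens} to replace the density point $\xi^+_{b_{1}\cdots b_{n}}$ by the exponentially close limit flag $\xi_{b, f_{b_{1}\cdots b_{n}}}$, and then to exploit the product structure of $\beta$. Since $\PP$ is compact (the compact group $K$ acts transitively on it), $\varphi$ is uniformly continuous, so \cref{convdens} together with this uniform continuity yield
$$\left| \int_{B} \varphi(\xi^+_{b_{1}\cdots b_{n}}) \, d\beta(b) - \int_{B} \varphi(\xi_{b, f_{b_{1}\cdots b_{n}}}) \, d\beta(b) \right| \longrightarrow 0 \quad\text{as } n \to \infty.$$
This reduces the task to computing the asymptotic behaviour of the second integral.

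The next step is to rewrite $\xi_{b, f_{b_{1}\cdots b_{n}}}$ as $(b_{1}\cdots b_{n}) \xi_{T^{n}b}$. The equivariance relation $(b_{1})_{\star}\nu_{\PP, Tb} = \nu_{\PP, b}$, combined with the Dirac-sum description of these limit measures, forces $b_{1}\xi_{Tb}$ to lie in the $F$-orbit $\xi_{b}\cdot F$; a straightforward iteration then yields $(b_{1}\cdots b_{n})\xi_{T^{n}b} = \xi_{b, f_{b_{1}\cdots b_{n}}}$ for $\beta$-a.e. $b$. The product structure of $\beta = \mu^{\otimes \N^{\star}}$ now enters crucially: under $\beta$, the variables $b_{1}\cdots b_{n}$ and $T^{n}b$ are independent, with respective laws $\mu^{\star n}$ and $\beta$. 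Moreover, the distribution of $\xi_{b}$ under $\beta$ is exactly $\nu_{\PP_{c}}$, since $\nu_{\PP_{c}, s(b)} = \delta_{\xi_{b}}$ $\beta$-a.s. and $s_{\star}\beta = \beta_{G_{c}}$ give $\int_{B}\delta_{\xi_{b}}\,d\beta = \int \nu_{\PP_{c},a}\,d\beta_{G_{c}}(a) = \nu_{\PP_{c}}$. By shift-invariance the same holds for $\xi_{T^{n}b}$, so
$$\int_{B} \varphi\bigl( (b_{1}\cdots b_{n})\xi_{T^{n}b} \bigr) \, d\beta(b) \; = \; (\mu^{\star n} \ast \nu_{\PP_{c}})(\varphi),$$
where $\nu_{\PP_{c}}$ is viewed as a probability on $\PP$ supported on the component $\PP_{c}$.

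The remaining and main point is the weak convergence $\mu^{\star n}\ast \nu_{\PP_{c}} \to \nu_{\PP}$. I would deduce this from the $\mu$-contraction of the walk on $\PP$ above $F$, recalled in the last paragraph of Section~4.1: for $\beta$-a.e. $b$ one has $(b_{1}\cdots b_{n})_{\star}m \to \nu_{\PP, b}$ weakly for every probability $m$ on $\PP$, in particular for $m = \nu_{\PP_{c}}$. Integrating against $\beta$ and applying dominated convergence together with $\nu_{\PP} = \int_{B}\nu_{\PP, b}\,d\beta(b)$ concludes the proof. The only nontrivial ingredient is this contraction property, which rests on the Zariski density and aperiodicity assumptions; the rest of the argument is a soft reduction via \cref{convdens} and the Bernoulli structure of $\beta$.
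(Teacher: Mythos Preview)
Your overall strategy coincides with the paper's: reduce via \cref{convdens} and uniform continuity to the law of $\xi_{b,f_{b_{1}\cdots b_{n}}} = b_{1}\cdots b_{n}\,\xi_{T^{n}b}$, identify this law as $\mu^{\star n}\ast\nu_{\PP_{c}}$ through the product structure of $\beta$, and show this converges to $\nu_{\PP}$. The first three steps are fine.

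The last step, however, is not justified. Your claim that for $\beta$-a.e.\ $b$ one has $(b_{1}\cdots b_{n})_{\star}m \to \nu_{\PP,b}$ for \emph{every} probability $m$ on $\PP$ is false whenever $F=G/G_{c}$ is non-trivial. Indeed, the left $G$-action permutes the connected components $\PP_{c}\cdot f$ of $\PP$: an element $g$ sends $\PP_{c}\cdot f$ to $\PP_{c}\cdot f_{g}f$. Hence for $m=\nu_{\PP_{c}}$, the measure $(b_{1}\cdots b_{n})_{\star}\nu_{\PP_{c}}$ is entirely supported on the single component $\PP_{c}\cdot f_{b_{1}\cdots b_{n}}$. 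Since $(f_{b_{1}\cdots b_{n}})_{n\ge 1}$ is a random walk on the finite group $F$ which does not converge almost surely, the sequence $(b_{1}\cdots b_{n})_{\star}\nu_{\PP_{c}}$ cannot converge to $\nu_{\PP,b}=\frac{1}{|F|}\sum_{f}\delta_{\xi_{b,f}}$, which spreads mass equally over all components. Dominated convergence therefore does not apply.

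The paper's fix, and the correct argument, works \emph{in mean} rather than almost surely: one shows $\mu^{\star n}\ast\delta_{\xi}\to\nu_{\PP}$ for every $\xi\in\PP$, and then integrates over $\xi\sim\nu_{\PP_{c}}$. When $F$ is trivial this is immediate from contraction; in general one first gets ${(\mu^{\star n}\ast\delta_{\xi})}_{|\PP_{c}}\to\nu_{\PP|\PP_{c}}$ from contraction within fibres, and then uses the \emph{aperiodicity} of $\mu$ in $F$ (here guaranteed by $\mu(e)>0$) to equidistribute the component, via Corollary~11.7 of \cite{BQRW}. This is precisely where aperiodicity enters; your argument, as written, never actually uses it.
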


\bigskip

\begin{rem.}
La relation $\eta^+_{g}f_{g}= \xi^+_{g}$ entraine que $\int_{B} \varphi(\eta^+_{b_{1}\dots b_{n}})d\beta(b)  \rightarrow \nu_{\PP_{c}}(\varphi)$.
\end{rem.}

\begin{proof}
D'après le  \cref{convdens}, il suffit de noter $\nu_{n} \in \mathcal{P}(\PP)$ la loi de $\xi_{b, f_{b_{1}\dots b_{n}}}$ quand $b$ varie suivant $\beta$ et de montrer que $\nu_{n}(\varphi) \rightarrow \nu_{\PP}(\varphi)$. Or on a l'égalité $\beta$-presque sûre $ \xi_{b, f_{b_{1}\dots b_{n}}}= b_{1}\dots b_{n}\xi_{T^nb}$. Cela entraîne $\nu_{n}=\mu^{\star n}\star \nu_{\PP_{c}}$. Comme l'action de $\Gamma$ sur $\PP$ est $\mu$-contractante au dessus de $F$, et comme $\mu$ est apériodique dans $F$, on a pour tout $\xi  \in \PP$ la convergence faible-$\star$  $\mu^{\star n} \star \delta_{\xi} \rightarrow \nu_{\PP}$, ce qui conclut la preuve. Remarquons que  dans le cas où $F$ est trivial, cette dernière assertion se démontre en utilisant le lemme $11.5$ $(b)$ de \cite{BQRW} qui donne immédiatement que pour toute fonction continue $\varphi : \PP\rightarrow \R$, on a $\mu^{\star n}\star \delta_{\xi}( \varphi)-\nu_{\PP}(\varphi)=\mu^{\star n}\star \delta_{\xi}( \varphi)-\mu^{\star n}\star \nu_{\PP}(\varphi)\rightarrow 0$. Dans le cas général, on démontre de la même fa\c con que ${\mu^{\star n}\star \delta_{\xi}}_{|\PP_{c}}\rightarrow \nu_{\PP|\PP_{c}}$ puis on utilise le corollaire $11.7$ de \cite{BQRW} (requérant l'apériodicité) pour conclure.

\end{proof}

\subsection{Contrôle de $\theta_{n}(b)$}

On a défini dans la section $4$ le projeté du cocycle d'Iwasawa $\sigma_{F}= \frac{1}{|F|}\sum_{f \in F}f.\sigma : G \times \PP \rightarrow \ag^F$, la  fonction $\theta : B \rightarrow \ag^F, b \mapsto \sigma_{F}(b_{1},\xi_{Tb})$, et posé $\theta_{n}(b) := \sum_{k=1}^n\theta(b_{1}\dots b_{k}, T^kb)= \sigma_{F}(b_{1}\dots b_{n}, \xi_{T^nb})$ (où la dernière égalité est $\beta$-presque sûre). Nous allons estimer le comportement asymptotique de $\theta_{n}(b)$. Cela s'avérera utile pour vérifier les conditions de majoration dans le théorème local limite.

\begin{lemme} \label{thetan0}
Pour $\beta$-presque tout $b \in B$, la suite $$(\frac{\sigma(b_{1}\dots b_{n}, \xi_{T^nb}) - n \sigma_{\mu}}{\sqrt{n \log \log n}})_{n \geq 3} $$
est bornée dans $\ag$. 
\end{lemme}

\begin{proof}
La loi du logarithme itéré (\cite{BQRW} $13.6$) implique que pour $\beta$-presque tout $b \in B$,  la suite $(\frac{\kappa(b_{1}\dots b_{n}) - n \sigma_{\mu}}{\sqrt{n \log \log n}})_{n \geq 3} $ est bornée dans $\ag$. Il suffit donc de prouver que pour $\beta$-presque tout $b \in B$,  la suite $(\frac{\sigma(b_{1}\dots b_{n}, \xi_{T^nb})- \kappa(b_{1}\dots b_{n}) }{\log n})_{n \geq 2}$ est bornée dans $\ag$. Pour cela, on reprend les représentations $(V_{\alpha}, \rho_{\alpha})_{\alpha \in \Pi}$ introduites dans la section $(4.1)$, ainsi que leurs plus hauts poids $(\omega_{\alpha})_{\alpha \in \Pi}$. Commes les $\omega_{\alpha}$ forment une base de $\ag^\star$, il suffit de prouver que  pour tout $\alpha \in \Pi$,  $\beta$-presque tout $b \in B$,  la suite $(\frac{\omega_{\alpha}(\sigma(b_{1}\dots b_{n}, \xi_{T^nb})- \kappa(b_{1}\dots b_{n}) )}{\log n})_{n \geq 2}$ est bornée dans $\R$.
Fixons $\alpha \in \Pi$, notons $(V, \rho, \omega)=(V_{\alpha}, \rho_{\alpha}, \omega_{\alpha})$.  On se donne une famille mesurable $(v_{b})_{b \in B} \in V^B$ telle que pour tout $b \in B$,  $v_{b} \in V_{\xi_{b}} -\{0\}$. En choisissant un bon produit scalaire sur la représentation induite de $V$ et en accord avec le  \cref{rep}, on peut écrire :
$\omega(\sigma(b_{1}\dots b_{n}, \xi_{T^nb})) = \log \frac{||\rho(b_{1}\dots b_{n})v_{T^nb}||}{||v_{T^nb}||}$, $\kappa(b_{1}\dots b_{n})= \log ||\rho(b_{1}\dots b_{n})_{|V}||$. On cherche donc à montrer que pour $\beta$-presque tout $b \in B$, il existe $C>0$ tel que pour $n \geq 0$ assez grand, 

$$\frac{||\rho(b_{1}\dots b_{n}) v_{T^nb}||}{||\rho(b_{1}\dots b_{n})_{|V}|| \,||v_{T^nb}||} \geq n^{-C}$$

D'après le \cref{LA1}, cette condition est réalisée si 
$$d(V_{\xi_{T^nb}}, V^-_{b_{1}\dots b_{n}}) \geq n^{-C}$$

Fixons $C>0$. Notons $E_{n}=\{b \in B, d(V_{\xi_{T^nb}}, V^-_{b_{1}\dots b_{n}}) < n^{-C}\}$. Alors $\beta(E_{n}) \leq M n^{-Cs}$ pour certains $M, s>0$ indépendants de $n$, par régularité H\"older des mesures stationnaires sur les espaces projectifs (voir preuve du \cref{convrep}). En choisissant $C>s^{-1}$, on obtient que $\sum_{n \geq 0} \beta(E_{n}) < \infty$. Le lemme de Borel-Cantelli donne alors le résultat.  

\end{proof}

\begin{cor} \label{thetan}
Pour $\beta$-presque tout $b \in B$, la suite $$(\frac{\theta_{n}(b) - n \sigma_{\mu}}{\sqrt{n \log \log n}})_{n \geq 3} $$
est bornée dans $\ag^F$. 
\end{cor}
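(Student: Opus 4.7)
The plan is to reduce Corollary \ref{thetan} to the preceding Lemma \ref{thetan0} by applying the averaging projection $\sigma \mapsto \sigma_F$ termwise, exploiting the $F$-invariance of the Lyapunov vector $\sigma_\mu$.

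First I would recall the $\beta$-almost sure identity $\theta_n(b) = \sigma_F(b_1\dots b_n,\xi_{T^nb})$ noted in Section 4 via the cocycle relation, together with the definition $\sigma_F = \frac{1}{|F|}\sum_{f \in F} f\cdot \sigma$. Writing this out gives, for $\beta$-almost every $b \in B$,
\begin{equation*}
\theta_n(b) \;=\; \frac{1}{|F|}\sum_{f \in F} f\cdot \sigma(b_1\dots b_n,\xi_{T^nb}).
\end{equation*}
Since $\sigma_\mu \in \mathfrak{a}^F$, we have $f\cdot \sigma_\mu = \sigma_\mu$ for every $f \in F$, hence $n\sigma_\mu = \frac{1}{|F|}\sum_{f\in F} f\cdot(n\sigma_\mu)$. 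Subtracting,
\begin{equation*}
\theta_n(b) - n\sigma_\mu \;=\; \frac{1}{|F|}\sum_{f \in F} f\cdot\bigl(\sigma(b_1\dots b_n,\xi_{T^nb}) - n\sigma_\mu\bigr).
\end{equation*}

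Next, I would fix any norm on $\mathfrak{a}$ and note that the (finite) action of $F$ on $\mathfrak{a}$ is by linear automorphisms, so there is a constant $C_F := \max_{f \in F} \|f\cdot\,\|_{\mathrm{op}} < \infty$ with respect to this norm. Dividing the displayed identity by $\sqrt{n\log\log n}$ and applying the triangle inequality therefore yields
\begin{equation*}
\frac{\|\theta_n(b) - n\sigma_\mu\|}{\sqrt{n\log\log n}} \;\le\; C_F \cdot \frac{\|\sigma(b_1\dots b_n,\xi_{T^nb}) - n\sigma_\mu\|}{\sqrt{n\log\log n}}.
\end{equation*}
By Lemma \ref{thetan0}, the right-hand side is $\beta$-almost surely bounded in $n \ge 3$, and so the same holds for the left-hand side.

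There is no substantive obstacle here: the whole statement is an immediate pushforward of Lemma \ref{thetan0} along the $F$-averaging projection, and the only ingredient beyond linear algebra is the $F$-invariance of $\sigma_\mu$, which has already been established (Section 4, via the $F$-equivariance of $\sigma$ and the $F$-invariance of $\nu_{\mathscr{P}}$). One might optionally remark that the same argument upgrades Lemma \ref{thetan0} to any $F$-equivariant linear projection $\mathfrak{a} \to \mathfrak{b}$, which will be convenient later for matching the hypotheses of the local limit theorem of Section 7.
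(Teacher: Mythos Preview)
Your proof is correct and follows exactly the same route as the paper, which simply states that the corollary is a direct consequence of Lemma~\ref{thetan0} together with the $F$-invariance of $\sigma_\mu$. You have merely made explicit the averaging and triangle-inequality step that the paper leaves implicit.
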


\begin{proof}

Cela découle directement du lemme précédent et l'invariance de $\sigma_{\mu}$ sous l'action de $F$ sur $\ag$.   
\end{proof}

\bigskip

\subsection{Loi des angles}

Considérons le système dynamique fibré $(B^+, \beta^+, T^+)$ des sections (4.2) et (6.1), avec les hypothèses $\mu(e)>0$ et $\chi$ non trivial (permettant d'appliquer le théorème local limite de la section précédente, et garantissant l'apériodicité de $\mu$ dans $F$). On fixe une fenêtre  $W \subseteq B^+$  de la forme 
$$W = B \times U \times O_{r}(\R) \times  \T^d \times I$$
 où $U \subseteq \ag^F$ $I\subseteq \R $ sont des parties convexes bornés ouvertes non vides de $\ag^F$ et $\R$, avec $I$ assez grand pour que tout translaté $I+t$ de $I$ rencontre $\chi(\Gamma)$. On rappelle que la $n$-fibre de $(B^+, T^+)$ passant par un point $c \in B^+$ est $F_{n,c} := (T^+)^{-n}(T^+)^n(c)$ et qu'on appelle morceau de fibre l'intersection  $F_{n,c} \cap W$. Nous avons expliqué comment désintégrer $\beta^+_{|W}$ par rapport aux morceaux de fibres $(F_{n,c} \cap W)_{c \in W}$, et obtenu ainsi des probabilités  conditionnelles $(\beta^+_{W, n, c})_{c \in W}$, concentrées sur ces morceaux de fibres. Rappelons aussi que si  $c =(b,z,O, x)\in W$, tout point $c'$ de la $n$-fibre passant par $c$ est de la forme $c'=(b'_{1}\dots b_{n}'T^nb, z', O', x')$. 

La loi des angles décrit le comportement des points de densité $\xi^+_{b'_{1}\dots b'_{n}}, \xi^-_{b'_{1}\dots b'_{n}} \in \PP$, $\eta^+_{b'_{1}\dots b'_{n}}, \eta^-_{b'_{1}\dots b'_{n}} \in \PP_{c}$ le long des morceaux de fibres. On en déduit le distribution des espaces répulsifs dans n'importe quelle représentation irréductible $(V, \rho)$ de $G_{c}$ grâce à la relation $V^-_{b'_{1}\dots b'_{n}} = (V_{\xi^-_{b'_{1}\dots b'_{n}}})^\perp \cap V$ (l'orthogonal étant pris par rapport à un bon produit scalaire sur la représentation induite). On appelle  $\widecheck{\mu} \in \mathcal{P}(G)$ le poussé en avant de la probabilité $\mu$ par la transposition, et $\widecheck{\nu}_{\PP} \in \mathcal{P}(\PP)$  l'unique probabilité $\widecheck{\mu}$-stationnaire sur $\PP$ (voir $4.1$) et $\widecheck{\nu}_{\PP_{c}}$ sa restriction à $\PP_{c}$.  

\bigskip

\begin{th.}[Loi des angles]
Soit $\varphi : \PP \rightarrow \R$ une fonction continue. Pour $\beta^+_{W}$-presque tout $c=(b,z, O, x) \in W$, on a : 
$$\int_{W} \varphi(\xi^{+}_{b'_{n}\dots b'_{1}})  \,\,d \beta^+_{W, n, c}(c')\, \underset{n \to +\infty}{\rightarrow} \,\nu_{\PP}(\varphi)$$

$$\int_{W} \varphi(\xi^{-}_{b'_{1}\dots b'_{n}})  \,\,d \beta^+_{W, n, c}(c')\, \underset{n \to +\infty}{\rightarrow} \,\widecheck{\nu}_{\PP}(\varphi)$$

\end{th.}

\bigskip

\begin{rem.} Les relation $\eta^+_{g}f_{g}= \xi^+_{g}$, $\eta^-_{g}f^{-1}_{g}= \xi^-_{g}$ entrainent  les lois des angles pour $\eta^+$, $\eta^-$ : 
$$\int_{W} \varphi(\eta^{+}_{b'_{n}\dots b'_{1}})  \,\,d \beta^+_{W, n, c}(c')\, \underset{n \to +\infty}{\rightarrow} \,\nu_{\PP_{c}}(\varphi)
\,\,\,\,\,\,\,\,\,\,\,\,\,\,
 \int_{W} \varphi(\eta^{-}_{b'_{1}\dots b'_{n}})  \,\,d \beta^+_{W, n, c}(c')\, \underset{n \to +\infty}{\rightarrow} \,\widecheck{\nu}_{\PP_{c}}(\varphi)$$
\end{rem.}

\bigskip

Exposons tout de suite le corollaire que nous utiliserons.

\begin{cor}\label{LAcor1}
Soit $(V, \rho)$ une représentation algébrique irréductible non bornée de $G_{c}$. Soit $c=(b,z,O, x) \in W$ un point $\beta^+_{W}$-typique. Alors pour tout $\varepsilon>0$, il existe une constante $r >0$ et un rang $n_{0}\geq 0$ tels que pour tout $n \geq n_{0}$, tout $v \in V-\{0\}$, on ait :

$$\beta^+_{W, n, c}\{c' \in W, \,\, d( \R v ,V^{-}_{b'_{1}\dots b'_{n}}) \geq r   \} > 1- \varepsilon$$

\end{cor}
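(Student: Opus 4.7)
La démonstration déduit le corollaire de la loi des angles pour $\eta^-$, combinée à la régularité Hölder des mesures stationnaires et à un argument d'équicontinuité pour uniformiser l'estimation en $v$. D'abord, on traduit l'événement à majorer en termes de points de densité: puisque $V^-_{b'_1\dots b'_n} = V_{\eta^-_{b'_1\dots b'_n}}^\perp \cap V$, la distance $d(\R v, V^-_{b'_1\dots b'_n})$ est une fonction continue de $\eta^-_{b'_1\dots b'_n}\in \PP_c$. Pour $v\in V-\{0\}$ et $r>0$, on fixe une fonction continue $\varphi_{v,r}:\PP_c\to[0,1]$ vérifiant $1_{\{d(\R v, V_\eta^\perp\cap V)\leq r\}} \leq \varphi_{v,r} \leq 1_{\{d(\R v, V_\eta^\perp\cap V)\leq 2r\}}$. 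La loi des angles pour $\eta^-$ (remarque suivant le théorème) assure que pour $\beta^+_W$-presque tout $c$, les mesures image $(\eta^-_{b'_1\dots b'_n})_\star \beta^+_{W,n,c}$ convergent faiblement vers $\widecheck\nu_{\PP_c}$; en particulier $\int\varphi_{v,r}(\eta^-_{b'_1\dots b'_n})\, d\beta^+_{W,n,c}(c') \to \widecheck\nu_{\PP_c}(\varphi_{v,r})$ pour chaque $v$.

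On majore ensuite $\widecheck\nu_{\PP_c}(\varphi_{v,r})$ uniformément en $v$ via la régularité Hölder. Notons $k:=\dim_{\text{prox}}\rho(G_c)$. Le plongement de Plücker $\PP_c\hookrightarrow \mathbb{P}(\Lambda^k V)$, $\eta\mapsto \Lambda^k V_\eta$, envoie $\widecheck\nu_{\PP_c}$ sur une mesure $\widecheck\mu$-stationnaire pour une sous-représentation proximale irréductible de $\Lambda^k V$. La régularité Hölder des mesures stationnaires sur les espaces projectifs (cf. \cite{BQRW}, section $14.1$) fournit alors $C,s>0$ tels que pour tout hyperplan $H\subseteq \Lambda^k V$ et tout $r>0$, $\widecheck\nu_{\PP_c}\{\eta : d(\Lambda^k V_\eta, H)\leq 2r\} \leq Cr^s$. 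Or la condition $d(\R v, V_\eta^\perp \cap V) \leq 2r$ entraîne que $\Lambda^k V_\eta$ est à distance au plus $2r$ du noyau $\Lambda^k(v^\perp)$ de la contraction $v\cdot : \Lambda^k V\to \Lambda^{k-1}V$, lui-même contenu dans tout hyperplan $H_v\supseteq \Lambda^k(v^\perp)$; d'où $\widecheck\nu_{\PP_c}(\varphi_{v,r}) \leq Cr^s$ uniformément en $v$. On choisit alors $r>0$ assez petit pour que $Cr^s<\varepsilon/2$.

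Il reste à uniformiser la convergence en $v\in \mathbb{P}(V)$. La famille $\{\varphi_{v,r}\}_{v\in\mathbb{P}(V)}$ est uniformément bornée par $1$ et équicontinue sur $\PP_c$, la fonction $(v,\eta)\mapsto d(\R v, V_\eta^\perp\cap V)$ étant uniformément continue sur le compact $\mathbb{P}(V)\times \PP_c$. On recouvre $\mathbb{P}(V)$ par un nombre fini de boules sur lesquelles les $\varphi_{v,r}$ oscillent d'au plus $\varepsilon/8$ en norme uniforme, et on applique la convergence faible aux centres de ces boules: on obtient un rang $n_0$ à partir duquel $\int\varphi_{v,r}\,d(\eta^-_\star\beta^+_{W,n,c})$ est à distance au plus $\varepsilon/2$ de $\widecheck\nu_{\PP_c}(\varphi_{v,r})$, uniformément en $v$. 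Combinée à la majoration précédente, cela donne $\int\varphi_{v,r}(\eta^-_{b'_1\dots b'_n})\, d\beta^+_{W,n,c}(c') < \varepsilon$ pour tout $n\geq n_0$ et tout $v$, ce qui majore $\beta^+_{W,n,c}\{c' : d(\R v, V^-_{b'_1\dots b'_n}) \leq r\}$ comme souhaité.

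L'obstacle principal anticipé est la traduction géométrique précise: il faut expliciter comment la proximité entre $\R v$ et $V_\eta^\perp\cap V$ dans $V$ se traduit en proximité entre $\Lambda^k V_\eta$ et un hyperplan de $\Lambda^k V$ dépendant continûment de $v$, afin d'invoquer la régularité Hölder de manière uniforme. Une fois ce dictionnaire établi, le reste est un exercice standard combinant convergence faible et équicontinuité sur le compact $\mathbb{P}(V)$.
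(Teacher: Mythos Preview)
Your proposal is correct and follows the same three-step strategy as the paper: the law of angles gives weak convergence of the distribution of $\eta^-_{b'_1\dots b'_n}$ to $\widecheck\nu_{\PP_c}$, H\"older regularity bounds the limit measure of the bad set uniformly in $v$, and compactness of $\mathbb{P}(V)$ uniformizes the convergence (the paper's terse ``raisonner par l'absurde et utiliser la compacit\'e de $\mathbb{P}(V)$'' is exactly your equicontinuity/finite-cover argument).

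The only difference lies in the H\"older step. The paper pushes $\widecheck\nu_{\PP_c}$ forward to a measure $m$ on the Grassmannian $\mathbb{G}_{r_0}(V)$ and invokes \cite{BQRW}, 14.5 directly for the estimate $m\{E : d(\R v, E^\perp) < r\} \leq Cr^s$. You instead pass through the Pl\"ucker embedding into $\mathbb{P}(\Lambda^k V)$ and reduce to H\"older regularity with respect to hyperplanes. The obstacle you anticipate is genuine but routine: the image of the Pl\"ucker map lies in a proper irreducible proximal subrepresentation $V' \subsetneq \Lambda^k V$, and the hyperplane $H_v \supseteq \Lambda^k(v^\perp)$ of the ambient space must be chosen so that its trace on $V'$ is uniformly transverse (equivalently, $\|P_{V'} f_v\|$ bounded below in $v$, where $f_v \perp H_v$). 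This follows by compactness of $\mathbb{P}(V)$ once you note that $V' \not\subseteq \Lambda^k(v^\perp)$ for every $v$, itself a consequence of the irreducibility of $V$. The paper's direct citation simply sidesteps this bookkeeping.
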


\begin{proof}
La définition de  $V^{-}_{b'_{1}\dots b'_{n}}$ suppose implicitement que $V$ est muni d'un produit scalaire, induit par un bon produit scalaire sur $\text{Ind}^G_{G_{c}}(V, \rho)$. Notons $r_{0}:= \dim_{\text{prox}} \rho(G_{c})$ la dimension proximal de la représentation. Pour $\R v\in \mathbb{P}(V)$, $r>0$, on pose $B(\R v, r) := \{E \in \mathbb{G}_{r_{0}}(V), \,\,d(\R v, E^\perp) < r\}$ (bien défini car $r_{0}< \dim V$ par l'hypothèse de  non compacité). Soit $m \in \mathcal{P}(\mathbb{G}_{r_{0}}(V))$ la probabilité image de $\widecheck{\nu}_{\PP_{c}}$ par le morphisme $G_{c}$-équivariant $\PP_{c} \rightarrow \mathbb{G}_{r_{0}}(V), \xi_{0}\mapsto V^U$. Il est prouvé dans (\cite{BQRW}, 14.5) que $m$ est H\"older-régulière au sens où il existe $C,s>0$ tel que pour tout $\R v \in \Pbb(V) $, pour tout $r>0$, $m(B(\R v, r))\leq Cr^s$. En particulier, on peut choisir $r>0$ assez petit pour que $m(B(\R v , 2r))< \frac{1}{2}\varepsilon$ pour tout $\R v \in \Pbb(V)$.

\bigskip

Notons $m_{n}$ la loi de $V_{\eta^{-}_{b'_{1}\dots b'_{n}}} \in \mathbb{G}_{r_{0}}(V)$ quand $c'=(b',z',O', x') \in W$ varie suivant $\beta^+_{W, n, c}$. La loi des angles affirme que $m_{n} \rightarrow m$ quand $n \to \infty$. Il existe donc un rang $n_{0}\geq 0$ à partir duquel $m_{n}(B(\R v , r))< \varepsilon$ pour tout $\R v\in \Pbb(V)$ (raisonner par l'absurde et utiliser la compacité de $\Pbb(V)$). Comme $m_{n}(B(\R v , r)) = \beta^+_{W, n, c}\{c' \in W, \,\, d( \xi ,V^{-}_{b'_{1}\dots b'_{n}}) < r   \}$, on obtient le résultat voulu. 
\end{proof}

\bigskip

\subsubsection*{Preuve de la loi des angles}

Nous avons choisi les décompositions de Cartan des éléments de $G$ de sorte que pour tout $g \in G$ on ait  $\xi^{+}_{^tg}= \xi^{-}_{g}$. Il suffit donc de vérifier que pour $\beta^+_{W}$-presque tout $c=(b,z, O, x) \in W$, 
$$\int_{W} \varphi(\xi^{+}_{b'_{n}\dots b'_{1}})  \,\,d \beta^+_{W, n, c}(c')\, \underset{n \to +\infty}{\rightarrow} {\nu}_{\PP}(\varphi)$$

Introduisons les suites $q_{n}:= \lfloor\text{log}n\rfloor^2$, $p_{n}:= n-q_{n}$. On va montrer que

\begin{itemize}
\item[a)] $\int_{W} | \varphi(\xi^{+}_{b'_{n}\dots b'_{1}})  - \varphi(\xi^{+}_{b'_{n}\dots b'_{p_{n} +1}})| \,\,d \beta^+_{W, n, c}(c')\, \underset{n \to +\infty}{\rightarrow} 0$ 
\item[b)] $\int_{W} \varphi(\xi^{+}_{b'_{n}\dots b'_{p_{n}+1}})  \,\,d \beta^+_{W, n, c}(c')\, \underset{n \to +\infty}{\rightarrow} \,\nu_{\PP}(\varphi)$
\end{itemize}

Nous utiliserons la description des fibres du système dynamique $(B^+, \beta^+, T^+)$ faite précédemment ainsi que le théorème local limite de la section $7.2$. Quelques rappels. La $n$-fibre passant par un point $c=(b,z, O, x) \in W$ est par définition $F_{n,c}:= (T^+)^{-n}(T^+)^n(c)$ et est paramétrée par $\Gamma^n$ via une application notée $h_{n,c} : \Gamma^n \rightarrow F_{n,c}$. Le morceau de fibre $F_{n,c} \cap W$ est alors paramétré par un sous ensemble de $\Gamma^n$ que l'on note 
$$Q_{n,c} := \{a \in \Gamma^n, h_{n,c}(a) \in W\}= \{a \in \Gamma^n, \,\, \theta_{n}(aT^nb) \in U -z + \theta_{n}(b), \,\,\chi(a_{1}\dots a_{n}) \in I -t + \chi(b_{1}\dots b_{n})) \}$$
où $t$ représente la coordonnée réelle de $x \in \T^d \times \R$. Via ces identifications, la mesure conditionnelle $\beta^+_{W, n, c} \in \mathcal{M}(F_{n,c} \cap W)$ correspond  à la probabilité $\frac{1}{\mu^{\otimes n}(Q_{n,c})} \mu^{\otimes n}_{|Q_{n,c}} \in \mathcal{P}(Q_{n,c})$.  

\bigskip
Soit $c=(b,z, O, x)  \in W$ $\beta^+_{W}$-typique. 

\bigskip

\emph{Preuve de $a)$ :} Il suffit de montrer que pour tout $\varepsilon > 0$, la suite 
$$ \beta^+_{W, n, c}\{c' \in W, \,\,d(\xi^{+}_{b'_{n}\dots b'_{1}}, \xi^{+}_{b'_{n}\dots b'_{p_{n}+1}})\geq  \varepsilon\}  \underset{n \to +\infty}{\rightarrow} 0$$
 
Mais le terme général de cette suite est plus petit que $$\frac{1}{\mu^{\otimes n}(Q_{n,c})} \mu^{\otimes n}\{a \in \Gamma^n, d(\xi^+_{a_{1}\dots a_{n}}, \xi^+_{a_{1}\dots a_{q_{n}}})\geq \varepsilon \}$$

Le \cref{TLL}, combiné avec le \cref{thetan} et le fait que $\chi^\star \mu$ est centrée,  assure que le dénominateur décroît moins vite qu'une puissance de $n$ : il existe $R>0$ tel que pour $n$ assez grand, on a $\mu^{\otimes n} (Q_{n,c}) \geq n^{-R}$. Mais le \cref{convdens} sur la convergence des points de densités  implique que le numérateur décroît  infiniment plus vite : il existe $\alpha >0$ tel que $\mu^{\otimes n}\{a \in \Gamma^n, d(\xi^+_{a_{1}\dots a_{n}}, \xi^+_{a_{1}\dots a_{q_{n}}})\geq \varepsilon \} \leq e^{-\varepsilon q_{n}}$ à partir d'un certain rang. Finalement, le rapport tend vers $0$ ce qui justifie le point $a)$.

\bigskip

\emph{Preuve de $b)$ :}

\begin{align*}
&\int_{W} \varphi(\xi^{+}_{b'_{n}\dots b'_{p_{n}+1}})  \,\,d \beta^+_{W, n, c}(c')\\
=& \frac{1}{\mu^{\otimes n} (Q_{n,c})} \int_{\Gamma^n}\varphi(\xi^{+}_{a_{n}\dots a_{p_{n}+1}}) 1_{U}(z + \theta_{n}(aT^nb)-\theta_{n}(b)) 1_{I}(t+\chi(a_{1}\dots a_{n})-\chi(b_{1}\dots b_{n}))  d\mu^{\otimes n}(a)\\
=&\int_{\Gamma^{q_{n}}} \varphi(\xi^{+}_{a_{1}\dots a_{q_{n}}}) L_{n,c}(a)d\beta(a) \\
\end{align*}

où la dernière égalité s'obtient en renversant l'ordre des $a_{i}$ et en utilisant le théorème de Fubini,  la fonction $L_{n,c} : B \rightarrow [0, +\infty]$ étant définie par : 
\small{
$$L_{n,c}(a) := \frac{1}{\mu^{\otimes n} (Q_{n,c})}  \int_{\Gamma^{p_{n}}}1_{U}(z + \theta_{n}(h_{1}\dots h_{p_{n}} a_{q_{n}}\dots a_{1}T^nb)-\theta_{n}(b)) \,1_{I}(t+\chi(h_{1}\dots h_{p_{n}}a_{q_{n}} \dots a_{1})-\chi(b_{1}\dots b_{n})) \,\, d\mu^{\otimes p_{n}}(h)$$}

\large
Admettons provisoirement que la suite de fonctions $(L_{n,c})_{n \geq 1}$ converge vers $1$ dans $L^{1}(B,\beta)$ lorsque $n \to \infty$. Comme $\varphi$ est bornée, on a alors 

$$\int_{\Gamma^{q_{n}}} |\varphi(\xi^{+}_{a_{1}\dots a_{q_{n}}}) L_{n,c}(a) -  \varphi(\xi^{+}_{a_{1}\dots a_{q_{n}}})| d\beta(a) \rightarrow 0$$

Or $\int_{\Gamma^{q_{n}}}  \varphi(\xi^{+}_{a_{1}\dots a_{q_{n}}}) d\beta(a) \rightarrow \nu_{\PP}(\varphi)$ d'après le \cref{convdensmes} sur la convergence des points de densité. 

Cela permet de conclure que  
$$\int_{W} \varphi(\xi^{+}_{b'_{n}\dots b'_{p_{n}+1}})  \,\,d \beta^+_{W, n, c}(c') \rightarrow \nu_{\PP}(\varphi)$$

Il reste à prouver le lemme technique suivant :
\begin{lemme*}
La suite de fonctions $(L_{n,c})_{n \geq 1}$ converge vers $1$ dans $L^{1}(B,\beta)$ lorsque $n \to \infty$.
\end{lemme*}

\begin{proof}
On peut réécrire la formule pour $L_{n,c}(a)$ sous la forme :

\begin{align*}
L_{n,c}(a)=\frac{1}{\mu^{\otimes n} (Q_{n,c})}  \mu^{\otimes p_{n}}\,\{\,h \in \Gamma^{p_{n}},\,\,& \sigma_{F}(h_{1}\dots h_{p_{n}}a_{q_{n}}\dots a_{1}, \xi_{T^nb}) \in U -z +\theta_{n}(b),\\& \chi(h_{1}\dots h_{p_{n}}a_{q_{n}} \dots a_{1}) \in I-t +\chi(b_{1}\dots b_{n})\,\,\} 
\end{align*}
puis en utilisant la propriété de cocycle et l'équivariance des $(\xi_{b})_{b \in B}$,
\begin{align*}
L_{n,c}(a)=\frac{1}{\mu^{\otimes n} (Q_{n,c})}  \mu^{\otimes p_{n}}\,\{\,h \in \Gamma^{p_{n}},\,\,& \sigma_{F}(h_{1}\dots h_{p_{n}}, \xi_{a_{q_{n}}\dots a_{1}T^nb}) \in U -z +\theta_{n}(b) -\sigma_{F}(a_{q_{n}}\dots a_{1}, \xi_{T^nb}),\\& \chi(h_{1}\dots h_{p_{n}})\in I-t +\chi(b_{1}\dots b_{n})-\chi(a_{q_{n}} \dots a_{1}) )\,\,\} 
\end{align*}

Le théorème local limite de la section précédente permet d'estimer de fa\c con précise  les numérateurs et dénominateurs de $L_{n,c}(a)$. On a besoin toutefois de vérifier que les perturbations $|\theta_{n}(b)-n \sigma_{\mu} -\sigma_{F}(a_{q_{n}}\dots a_{1}, \xi_{T^nb})+ q_{n}\sigma_{\mu}|$ et $|\chi(b_{1}\dots b_{n})-\chi(a_{q_{n}} \dots a_{1})|$ ne croissent pas trop vite avec $n$,  plus précisément qu'elles sont toutes deux inférieures à $R \sqrt{n \text{log}n}$ pour un certain $R>0$ indépendant de $a$ (mais pouvant dépendre de $b$). On a d'emblée les majorations suivantes  (pour $n \geq 3$):

 $\bullet$ $|\chi(b_{1}\dots b_{n})| \leq R_{1}  \sqrt{n \,\text{log}\text{log}n}\,\,$ en utilisant le fait que  $\chi_{\star} \mu$ est centrée à support fini.   

  $\bullet$ $|\chi(a_{q_{n}} \dots a_{1})| \leq R_{2}q_{n}\,\,$ car $\chi_{\star} \mu$ est à support fini.

 $\bullet$ $|\theta_{n}(b)-n \sigma{\mu}| \leq R_{3} \sqrt{n \,\text{loglog}n}\,\,$ d'après le  \cref{thetan}.

 Pour la majoration de $|\sigma_{F}(a_{q_{n}}\dots a_{1}, \xi_{T^nb})- q_{n}\sigma_{\mu}|$, on restreint la variable $a \in B$ à un sous ensemble quasi-plein de $B$. Posons pour cela :
 $$E_{n}:= \{a \in B, \,\,\forall p \geq n, ||\frac{1}{q_{n}}\sigma_{F}(a_{q_{n}}\dots a_{1}, \xi_{T^nb}) - \sigma_{\mu}|| \leq 1 \}$$
 Le principe des grandes déviations (\cite{BQRW},  $13.4$) combiné à la $F$-invariance de $\sigma_{\mu}$  implique que  $\beta(E_{n}) \geq 1- e^{-\alpha n}$ pour $\alpha>0$ assez petit, $n \geq 0$ assez grand. 
 
 On a alors  $\int_{B}|L_{n,c}-1| \,d \beta \leq \int_{E_{n}}|L_{n,c}-1| \,d \beta  +  \int_{E^c_{n}}L_{n,c}\,d \beta +  \beta(E^c_{n}) $ et les termes $ \int_{E^c_{n}}L_{n,c}\,d \beta $ et $\beta(E^c_{n})$ tendent vers $0$ (pour le premier, remarquer que  $\int_{E^c_{n}}L_{n,c}\,d \beta  \leq \frac{\beta(E^c_{n})}{\mu^{\otimes n} (Q_{n,c} )}$ avec $\mu^{\otimes n} (Q_{n,c})$ décroissant moins vite qu'un $n^{-R'}$ via le théorème local limite).
 Pour conclure la preuve, il suffit de montrer la convergence uniforme $|| 1_{E_{n}}L_{n,c} -1_{E_{n}}||_{\infty} \rightarrow 0$. 
 
 Notons $N_{n,c}(a)$ et $D_{n,c}$ les numérateurs et dénominateurs de $L_{n,c}(a)$. Le \cref{TLL} entraine qu'uniformément pour $a \in E_{n}$, on a 

$$N_{n,c}(a)\, 2\pi p_{n}\, e^{\frac{1}{2p_{n}} ||\theta_{n}(b)-n \sigma_{\mu} -\sigma_{F}(a_{q_{n}}\dots a_{1}, \xi_{T^nb})+ q_{n}\sigma_{\mu},\,  \chi(b_{1}\dots b_{n})-\chi(a_{q_{n}} \dots a_{1}) ||^2_{\Phi_{\mu}}}\,\,   \rightarrow \,\,\Pi_{\mu}( U\times (I-t))$$
\large
Par ailleurs, 
 $$D_{n,c}\,2\pi n \,e^{\frac{1}{2n} ||\theta_{n}(b)- n \sigma_{\mu},\chi(b_{1}\dots b_{n})||^2_{\Phi_{\mu}} }\,\,   \rightarrow \,\,\Pi_{\mu}( U\times (I-t))$$

Remarquons que $\Pi_{\mu}( U-z\times (I-t)) >0$ par définition de $\Pi_{\mu}$ et par choix de $I$ assez grand. On est donc ramené à montrer que, uniformément en $a \in E_{n}$,  on a :

{\normalsize \begin{align*}
&\frac{1}{2n} ||\theta_{n}(b)- n \sigma_{\mu},\chi(b_{1}\dots b_{n})||^2_{\Phi_{\mu}} 
-\frac{1}{2p_{n}} ||\theta_{n}(b)-n \sigma_{\mu} -\sigma_{F}(a_{q_{n}}\dots a_{1}, \xi_{T^nb})+ q_{n}\sigma_{\mu},\,  \chi(b_{1}\dots b_{n})-\chi(a_{q_{n}} \dots a_{1}) ||^2_{\Phi_{\mu}} \rightarrow 0\\
\end{align*}}
Notons pour cela 
\begin{align*}
&r_{n}:=(\theta_{n}(b)-n \sigma_{\mu},0)\,\,\,\,\,\,\,\,\,\,\,\,\,\,\, s_{n} :=(\sigma_{F}(a_{q_{n}}\dots a_{1}, \xi_{T^nb})+ q_{n}\sigma_{\mu},0)\\
&t_{n} := (0,\chi(b_{1}\dots b_{n}))\,\,\,\,\,\,\,\,\,\,\,\,\,\,\,\,\,\,u_{n} := (0,\chi(a_{q_{n}} \dots a_{1}))\\
 \end{align*}
  On a 
  $$||\theta_{n}(b)- n \sigma_{\mu},\chi(b_{1}\dots b_{n})||^2_{\Phi_{\mu}} 
= ||r_{n}||^2_{\Phi_{\mu}} + ||t_{n}||^2_{\Phi_{\mu}} + 2 \langle r_{n}, t_{n} \rangle_{\Phi_{\mu}}$$

Par ailleurs, 
\begin{align*}
&||\theta_{n}(b)-n \sigma_{\mu} -\sigma_{F}(a_{q_{n}}\dots a_{1}, \xi_{T^nb})+ q_{n}\sigma_{\mu},\,  \chi(b_{1}\dots b_{n})-\chi(a_{q_{n}} \dots a_{1}) ||^2_{\Phi_{\mu}}\\
 &= ||r_{n}+s_{n}+t_{n}+u_{n}||^2_{\Phi_{\mu}}\\
& =  \sum_{x, y\in \{r,s,t,u\}} \langle x_{n}, y_{n}\rangle\\
& = ||r_{n}||^2_{\Phi_{\mu}} + ||t_{n}||^2_{\Phi_{\mu}} + 2 \langle r_{n}, t_{n} \rangle_{\Phi_{\mu}} + q_{n}\sqrt{n \,\text{log}n }\, \psi(n,a)\\
\end{align*}
 où $\psi : \N \times B \rightarrow \R$ est bornée uniformément pour tout $n$ et $\beta$-presque tout $a$ (utiliser l'inégalité de Cauchy-Schwartz, l'équivalence des normes en dimension finie et les majorations précédentes de $r_{n}, s_{n}, t_{n}, u_{n})$. 

On en déduit donc la convergence souhaitée, concluant la preuve du lemme et de la loi des angles. 

 \end{proof}

\subsection{Divergence de la projection de Cartan}

Le \cref{LA1} qui contrôle la norme et la direction de $\rho(g)v$ fait intervenir la projection de Cartan $\kappa$, lue par le plus haut poids $\omega$ de $G_{c}$ sur $V$,  ainsi que par les racines simples $\alpha \in \Pi$ du système de racines de $\ag$. Pour  contrôler la dérive $b'_{1}\dots b'_{n} b^{-1}_{n}\dots b^{-1}_{1} u_{p}$, il convient donc de  préciser  le comportement asymptotique de $\kappa(b'_{1}\dots b'_{n})$ quand $c'=(b',z',O',x') \in W$ décrit un morceau de $n$-fibre, du point de vue de $\omega$ et des $\alpha \in \Pi$.

 \begin{lemme}\label{LA4}
 Soit $c=(b,z,O,x) \in W$ un point $\beta^+_{W}$-typique. Pour tout $\varepsilon>0$, il existe une constante $R>0$ et un rang $n_{0}\geq 0$ tels que pour tous $n \geq n_{0}$, $\xi \in \PP_{c}$, 
 $$ \beta^+_{W,n,c}\{c' \in W, ||\kappa(b'_{1}\dots b'_{n})-\sigma(b'_{1}\dots b'_{n}, \xi)|| \leq R\} \geq 1-\varepsilon$$
 \end{lemme}
 
 \begin{proof}
 Reprenons les représentations $(\rho_{\alpha}, V_{\alpha})_{\alpha \in \Pi}$ de la section $4.1$. Leurs plus hauts poids $(\omega_{\alpha})_{\alpha \in \Pi}$ forment une base de $\ag^\star$. Il suffit donc de prouver le résultat énoncé après avoir remplacé $||\kappa(b'_{1}\dots b'_{n})-\sigma(b'_{1}\dots b'_{n}, \xi)||$ par $|\omega_{\alpha}(\kappa(b'_{1}\dots b'_{n})-\sigma(b'_{1}\dots b'_{n}, \xi))|$, et ce quelque soit $\alpha \in \Pi$. On rappelle que les $V_{\alpha}$ sont munis d'un produit scalaire, restriction d'un bon produit scalaire sur leurs représentations induites.
 
 Fixons $\alpha \in \Pi$, notons $(V, \rho, \omega)=(V_{\alpha}, \rho_{\alpha}, \omega_{\alpha})$. Le \cref{rep} assure que pour $g \in G$, $\xi \in \PP_{c}$, $v\in V_{\xi}-\{0\}$, on a  $\omega(\sigma(g,\xi)) = \text{log}\frac{||\rho(g)v||}{||v||}$. Le premier point du \cref{LA1} entraine alors que : 
$$ 0\leq \omega(\kappa(g)-\sigma(g,\xi)) \leq |\text{log} \,d(V_{\xi}, V^-_{g})|$$

Le \cref{LAcor1} de la loi des angles implique  alors que pour tout $\varepsilon>0$, il existe une constante $R >0$ et un rang $n_{0}\geq 0$ tels que pour tout $n \geq n_{0}$, tout $\xi \in \PP_{c}$, on ait :
$$\beta^+_{W, n, c}\{c' \in W, \,\, |\omega(\kappa({b'_{1}\dots b'_{n}}) - \sigma(b'_{1}\dots b'_{n},\xi)| \leq R   \} > 1- \varepsilon$$
Cela conclut la preuve.

 \end{proof}

On déduit une estimation de $\omega(\kappa(b'_{1}\dots b'_{n}))$ le long des morceaux de fibres. 
 
 \begin{cor}\label{LAcor2}
 Soit $(V, \rho)$ une représentation fortement irréductible de $G$, de plus haut poids $\omega \in \ag^\star$.  Soit $c=(b,z,O,x) \in W$ un point $\beta^+_{W}$-typique. Pour tout $\varepsilon>0$, il existe une constante $R>0$ et un rang $n_{0}\geq 0$ tels que pour tous $n \geq n_{0}$, 
 $$ \beta^+_{W,n,c}\{c' \in W, |\omega(\kappa(b'_{1}\dots b'_{n})- \theta_{n}(b))| \leq R\} \geq 1-\varepsilon$$
 \end{cor}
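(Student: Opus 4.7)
Le plan est d'utiliser le \cref{LA4} déjà établi et la définition explicite de la fenêtre $W = B \times U \times O_{r}(\R) \times \T^d \times I$ pour relier $\kappa(b'_{1}\dots b'_{n})$ à $\theta_{n}(b)$ par l'intermédiaire de $\theta_{n}(b')$ et de $\sigma(b'_{1}\dots b'_{n}, \xi_{T^n b})$. Par linéarité de $\omega$ et inégalité triangulaire, on décompose
$$|\omega(\kappa(b'_{1}\dots b'_{n}) - \theta_{n}(b))| \,\leq\, |\omega(\kappa(b'_{1}\dots b'_{n})) - \omega(\theta_{n}(b'))| \,+\, ||\omega||\,||\theta_{n}(b') - \theta_{n}(b)||,$$
et il suffit de contrôler chacun des deux termes.

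Le second terme est borné gratuitement par la géométrie de $W$. Si $c = (b,z,O,x)$ et $c'=(b',z',O',x')$ sont tous deux dans $W$ avec $c' \in F_{n,c}$, la définition de $T^+$ (itérée $n$ fois) et la paramétrisation $h_{n,c}$ de la \cref{parfib} donnent $z' = z - \theta_{n}(b) + \theta_{n}(b')$, d'où $\theta_{n}(b') - \theta_{n}(b) = z' - z$. Comme $z, z' \in U$ borné, on en déduit $||\theta_{n}(b') - \theta_{n}(b)|| \leq \mathrm{diam}(U)$, une constante indépendante de $n, c, c'$.

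Pour le premier terme, j'utiliserai la $F$-invariance du plus haut poids $\omega \in \ag^\star$ rappelée en fin de section $4.1$, qui entraîne l'identité $\omega \circ \sigma_{F} = \omega \circ \sigma$. Combinée avec le fait que $T^n b' = T^n b$ le long de la $n$-fibre (cf.\ \cref{parfib}) et donc $\xi_{T^n b'} = \xi_{T^n b}$, elle fournit $\omega(\theta_{n}(b')) = \omega(\sigma_{F}(b'_{1}\dots b'_{n}, \xi_{T^n b})) = \omega(\sigma(b'_{1}\dots b'_{n}, \xi_{T^n b}))$. Le \cref{LA4}, appliqué au choix $\xi := \xi_{T^n b} \in \PP_{c}$ (admissible, l'énoncé étant uniforme en $\xi$), fournit alors, pour tout $\varepsilon>0$, un $R_{0} > 0$ et un rang $n_{0}$ tels que pour $n \geq n_{0}$, l'ensemble $\{c' \in W \,:\, ||\kappa(b'_{1}\dots b'_{n}) - \sigma(b'_{1}\dots b'_{n}, \xi_{T^n b})|| \leq R_{0}\}$ soit de $\beta^+_{W,n,c}$-mesure au moins $1-\varepsilon$. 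Sur cet ensemble, le premier terme est majoré par $||\omega||\,R_{0}$, et la conclusion suit avec $R := ||\omega||(R_{0} + \mathrm{diam}(U))$. L'argument est court et ne présente pas de véritable obstacle ; la seule subtilité à surveiller est la dépendance en $n$ du drapeau auxiliaire $\xi_{T^n b}$, rendue inoffensive par l'uniformité en $\xi$ du \cref{LA4}.
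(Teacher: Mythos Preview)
Your proof is correct and follows exactly the paper's approach: apply \cref{LA4} with $\xi = \xi_{T^n b}$ and use the $F$-invariance of $\omega$ to identify $\omega(\sigma(b'_{1}\dots b'_{n},\xi_{T^n b}))$ with $\omega(\theta_{n}(b'))$. The only difference is that you have spelled out the bounded-difference step $\theta_{n}(b') - \theta_{n}(b) = z' - z \in U - U$ coming from the window constraint, which the paper's one-line proof leaves implicit.
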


\begin{proof}
Cela provient directement  du \cref{LA4} appliqué avec $\xi= \xi_{T^nb}$, et de la $F$-invariance du plus haut poids $\omega$.
\end{proof}

Le lemme suivant permettra de minorer les valeurs des $\alpha(\kappa(b'_{1} \dots b'_{n})), \alpha \in \Pi$, le long des morceaux de fibres.

\begin{lemme} \label{alphacart}
Soit $c=(b,z,O,x) \in W$ un point $\beta^+_{W}$-typique. Pour tout $t_{0}, \varepsilon>0$, il existe un rang $n_{0}\geq 0$ tels que pour tous $n \geq n_{0}$, 
 $$ \beta^+_{W,n,c}\{c' \in W, ||\kappa(b'_{1}\dots b'_{n})- n \sigma_{\mu}|| \leq nt_{0} \} \geq 1-\varepsilon$$

\end{lemme}

\begin{proof}
On note 
$$E_{n}:= \{g \in \Gamma,\, \, \sigma_{F}(g,\xi_{T^nb} ) \in U-z+ \theta_{n}(b), \, \chi(g) \in I-t + \chi(b_{1}\dots b_{n})\}$$
 $E := \{g \in \Gamma, \,||\kappa(g)-n \sigma_{\mu}||\leq nt_{0}\}$. D'après la description des mesures conditionnelles ($6.1$), pour $n \geq 0$, la probabilité à estimer est $ \frac{\mu^{\star n}(E_{n} \cap E)}{\mu^{\star n}(E_{n})}$. Le principe des grandes déviations pour la projection de Cartan (\cite{BQRW}, 13.6) affirme qu'il existe $c>0$ tel que pour $n$ assez grand, on a $\mu^{\star n}(E) \geq 1- e^{-cn}$. Par ailleurs, d'après le \cref{TLL}, il existe $R>0$ tel que pour $n$ assez grand, on a $\mu^{\star n} (E_{n}) \geq n^{-R}$. Finalement, à partir d'un certain rang, on a $ \frac{\mu^{\star n}(E_{n} \cap E)}{\mu^{\star n}(E_{n})} = 1- \frac{\mu^{\star n}( E_{n}\cap E^c)}{\mu^{\star n}(E_{n})} \geq 1- \frac{e^{-cn}}{n^{-R}} \underset{n \to +\infty}{\longrightarrow} 1$. D'où le résultat. 

\end{proof}

\subsection{Contrôle de la dérive}

On conclut la section par les théorèmes de contrôle annoncés, valables pour des représentations très générales. 

 
Soit $(V, \rho)$ une représentation algébrique fortement irréductible non bornée de $G$, et $\omega \in \ag^\star$ le plus haut poids de  $(V, \rho_{|G_{c}})$. On suppose $V$ muni d'un produit scalaire tel que $\rho(K) \subseteq O(V)$, $\rho(A) \subseteq \text{Sym}(V)$.

  \bigskip

\begin{itemize}

\item {\bf Contrôle de la norme.}
Soit $c=(b,z,O, x) \in W$ un point $\beta^+_{W}$-typique. Alors  pour tout $\varepsilon>0$,  il existe une constante $C>0$ et un rang $n_{0}\geq 0$ tel que pour tous $n \geq n_{0}$, $v \in V$, on ait :

$$\beta^+_{W, n, c}\{c' \in W, \,\,C^{-1} e^{\omega(\theta_{n}(b))} ||v||  \leq ||\rho(b'_{1}\dots b'_{n}) v|| \leq C e^{\omega(\theta_{n}(b))} ||v|| \} > 1- \varepsilon$$

\bigskip

\item {\bf Contrôle de la direction.} Soit $c=(b,z,O, x) \in W$ un point $\beta^+_{W}$-typique. Alors pour tout  $\varepsilon >0$ il existe une constante $\delta>0$ et un rang $n_{0} \geq 0$ tels que  pour tous $n \geq n_{0}$, $v \in V-\{0\}$, on ait : 

$$\beta^+_{W, n, c}\{c' \in W, \,\,d(\rho(b'_{1}\dots b'_{n})\R v, \,V^+_{b'_{1}\dots b'_{n}}) \leq e^{-\delta n }\} > 1- \varepsilon$$

\end{itemize}

\bigskip

\begin{proof}

Pour la norme : Soit $\varepsilon>0$. D'après le \cref{LAcor1} de la loi des angles et le \cref{LAcor2}, il existe des constantes $r>0, R>0$ et un rang $n_{0} \geq 0$ tel que pour tout $n \geq n_{0}$, $v \in V-\{0\}$ on a 
$$\beta^+_{W, n, c}\{c' \in W, \,\, d( \R v ,V^{-}_{b'_{1}\dots b'_{n}}) \geq r \,\text{ et }\, ||\omega(\kappa(b'_{1}\dots b'_{n})- \theta_{n}(b))|| \leq R   \} > 1- \varepsilon$$
Le résultat est alors une conséquence directe du premier point du  \cref{LA1}.




\bigskip

Pour la direction : 
Soit $\varepsilon >0$. D'après le \cref{LAcor1} et le \cref{alphacart}, étant donné $t_{0}>0$,  il existe des constantes $r>0, R>0$ et un rang $n_{0} \geq 3$ tel que pour tout $n \geq n_{0}$, $v \in V-\{0\}$ on a 

$$\beta^+_{W, n, c}\{c' \in W, \,\, d( \R v ,V^{-}_{b'_{1}\dots b'_{n}}) \geq r \,\text{ et }\,  ||\kappa(b'_{1}\dots b'_{n})-n \sigma_{\mu}|| \leq n t_{0}   \} > 1- \varepsilon$$

Soit $c' \in W$ dans cet ensemble. En utilisant le point $2$ du  \cref{LA1}, on obtient que 
\begin{align*}
d(\rho(b'_{1}\dots b'_{n}) \R v, V^+_{b'_{1}\dots b'_{n}}) &\leq \frac{1}{d(\R v, V^-_{b'_{1}\dots b'_{n}})} \max_{\alpha \in \Pi} e^{-\alpha ( \kappa(b'_{1}\dots b'_{n}))}\\
&\leq r^{-1}\max_{\alpha \in \Pi} e^{-n(\alpha (\sigma_{\mu}) + \alpha(x_{n,c'}))}\\
\end{align*}
 où on a noté $x_{n,c'}:=\frac{1}{n}\kappa(b'_{1}\dots b'_{n})- \sigma_{\mu}$, élément de $\ag$ de norme au plus $t_{0}$. Il est prouvé dans \cite{BQRW} (section $10.4$) que le vecteur de Lyapunov $\sigma_{\mu}$ est dans l'intérieur de la chambre de Weyl $\ag^+$, autrement dit que pour tout $\alpha \in \Pi$, on a $\alpha(\sigma_{\mu})>0$. On peut donc choisir $t_{0}>0$ tel que $||\alpha|| \leq \frac{1}{2t_{0}}\alpha(\sigma_{\mu})$ pour tout $\alpha \in \Pi$.  On a alors  
$$d(\rho(b'_{1}\dots b'_{n}) \R v, V^+_{b'_{1}\dots b'_{n}}) \leq r^{-1}e^{- \frac{n}{2}\min_{\alpha \in \Pi} \alpha(\sigma_{\mu})}$$

Cela  donne le résultat annoncé.
\end{proof}

\bigskip

\begin{rem.}
Le contrôle de la direction reste vrai pour des représentations induites à $G$ à partir de représentations irréductibles non bornées de $G_{c}$, et munies d'un bon produit scalaire. L'hypothèse de forte irréductibilité est seulement utilisée dans le contrôle de la norme, pour lequel on utilise la $F$-invariance du plus haut poids à travers le \cref{LAcor2}. 
\end{rem.}

On termine la section par un dernier corollaire qui permettra de préciser le contrôle directionnel dans la preuve de la dérive. 

\begin{cor}\label{LAcor6}
Soit $c=(b,z,O, x)\in W$   un point $\beta^+_{W}$-typique. Pour tout $\varepsilon>0$, il existe une constante $\delta>0$ et un rang $n_{0}\geq 0$ tels que, pour $n\geq n_{0}$, on a 
$$\beta^+_{W,n,c}\{c' \in W, \,\, d(b'_{1} \dots b'_{n} \xi_{T^nb}, \,\xi^+_{b'_{1} \dots b'_{n}}) < e^{-\delta n}\} > 1- \varepsilon$$
\end{cor}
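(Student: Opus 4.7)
Le plan est d'utiliser le plongement $G$-�quivariant $\PP \hookrightarrow \amalg_{f \in F} \Pi_{\alpha \in \Pi} \mathbb{P}(V^f_{\alpha})$ rappel� en section $4.1$, o� $(\rho_{\alpha}, V_{\alpha})_{\alpha \in \Pi}$ est la famille finie de repr�sentations proximales irr�ductibles de $G_{c}$. Puisque la distance sur $\PP$ est d�finie comme somme des distances projectives $d^f_{\alpha}$, il suffira de majorer chaque composante s�par�ment puis de conclure par une borne d'union sur la famille finie $\Pi$.

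Concr�tement, pour chaque $\alpha \in \Pi$, je me donnerais une section mesurable $B \rightarrow V_{\alpha},\, b \mapsto v_{\alpha, b}$ v�rifiant $v_{\alpha, b} \in V_{\alpha, \xi_{b}}-\{0\}$, ce qui est loisible car $\xi_{b} \in \PP_{c}$ et $\dim V_{\alpha, \xi_{b}}=1$ par proximalit�. Par $G$-�quivariance de l'application $\xi \mapsto V_{\alpha, \xi}$, pour $\beta^+_{W,n,c}$-presque tout $c'$ on a simultan�ment
\[
V_{\alpha,\, b'_{1}\dots b'_{n} \xi_{T^n b}} = \rho_{\alpha}(b'_{1}\dots b'_{n})\, \R v_{\alpha, T^n b}, \quad V_{\alpha,\, \xi^+_{b'_{1}\dots b'_{n}}} = V^+_{\alpha, b'_{1}\dots b'_{n}},
\]
la seconde �galit� d�coulant de la proximalit� qui impose $\dim V^U_{\alpha}=1$. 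De plus, ces deux drapeaux appartiennent � la m�me composante connexe de $\PP$ (celle associ�e � $f_{b'_{1}\dots b'_{n}}$, puisque $m_{g} \in M$ et $k^c_{g}\in K_{c}$ ont m�me image que $g$ dans $F$), donc la distance projective correspondante s'�crit $d(\rho_{\alpha}(b'_{1}\dots b'_{n}) \R v_{\alpha, T^n b},\, V^+_{\alpha, b'_{1}\dots b'_{n}})$ et est finie.

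J'appliquerais alors le contr�le directionnel de la section $(8.6)$, valable pour les repr�sentations induites � partir de repr�sentations irr�ductibles non born�es de $G_{c}$ (cf. la remarque suivant son �nonc�), � chaque repr�sentation induite $W_{\alpha}$ avec tol�rance $\varepsilon/|\Pi|$ et au vecteur $v_{\alpha, T^n b}$. Ce vecteur est $\mathcal{Q}^+_{n}$-mesurable, donc constant le long de la $n$-fibre $F_{n,c}$, ce qui autorise bien l'application du th�or�me en restriction � $F_{n,c}\cap W$. On obtient ainsi $\delta_{\alpha}>0$ et $n_{0,\alpha}\geq 0$ tels que pour $n \geq n_{0,\alpha}$ :
\[
\beta^+_{W,n,c}\{c' \in W : d(\rho_{\alpha}(b'_{1}\dots b'_{n}) \R v_{\alpha, T^n b},\, V^+_{\alpha, b'_{1}\dots b'_{n}}) \leq e^{-\delta_{\alpha} n}\} > 1 - \varepsilon/|\Pi|.
\]

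En posant $\delta_{0}:=\min_{\alpha \in \Pi} \delta_{\alpha}$ et $n_{0} := \max_{\alpha \in \Pi} n_{0, \alpha}$, une borne d'union sur $\Pi$ fournirait que pour $n \geq n_{0}$, avec $\beta^+_{W,n,c}$-probabilit� au moins $1-\varepsilon$, toutes les distances $d^f_{\alpha}$ sont simultan�ment major�es par $e^{-\delta_{0}n}$. En sommant sur $\alpha \in \Pi$ (fini), on en d�duit $d(b'_{1}\dots b'_{n}\xi_{T^n b},\, \xi^+_{b'_{1}\dots b'_{n}}) \leq |\Pi|\, e^{-\delta_{0} n}$, qu'on r��crit $e^{-\delta n}$ pour tout $\delta \in \,]0, \delta_{0}[$ en augmentant l�g�rement $n_{0}$. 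L'�tape principale � justifier est l'extension du contr�le directionnel de $(8.6)$ aux repr�sentations induites $W_{\alpha}$ (garantie par la remarque cit�e) et la constance $\mathcal{Q}^+_{n}$-mesurable du vecteur $v_{\alpha, T^n b}$ le long des fibres; le reste est essentiellement une v�rification routini�re combinant ces deux ingr�dients.
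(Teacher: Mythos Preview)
Your proposal is correct and follows essentially the same route as the paper's own proof: reduce to the proximal representations $(V_{\alpha},\rho_{\alpha})_{\alpha\in\Pi}$ via the embedding of $\PP$, identify $V_{\alpha,\xi^+_{b'_1\dots b'_n}}$ with the attracting line $V^+_{\alpha,b'_1\dots b'_n}$, and apply the directional control of section~(8.6) (extended to induced representations by the remark following it) with $\R v=V_{\alpha,\xi_{T^nb}}$. The paper compresses the union-bound step into a reference to the argument of Corollaire~\ref{convdens}, while you spell it out explicitly; note also that your $\mathcal{Q}^+_n$-measurability remark, though correct, is not actually needed since the directional control is stated uniformly over all $v\in V-\{0\}$.
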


\begin{proof}
En raisonnant à l'aide des $(V_{\alpha}, \rho_{\alpha})_{ \alpha \in \Pi}$ comme dans la preuve du \cref{convdens}, on voit qu'il suffit de prouver le résultat suivant  : Soit $(V, \rho)$ une représentation irréductible \emph{proximale} non bornée de $G_{c}$, et $\langle .,.\rangle$ un bon produit scalaire sur la représentation induite à $G$.  Alors pour $\beta^+_{W}$-presque tout $c=(b,z,O,x)\in W$, pour tout $\varepsilon>0$, il existe une constante $\delta>0$ et un rang $n_{0}\geq 0$ tels que, pour $n\geq n_{0}$, on a 
$$\beta^+_{W,n,c}\{c' \in W, \,\, d(\rho(b'_{1} \dots b'_{n})V_{\xi_{T^nb}}, \,\,V_{\xi^+_{b'_{1} \dots b'_{n}}}) < e^{-\delta n}\} > 1- \varepsilon$$
Comme $(V, \rho)$ est proximale,  les $V_{\xi} \in \Pbb(V)$ sont des droites. On conclut directement à partir du contrôle de la direction démontré plus haut (avec $\R v =  V_{\xi_{T^nb}}$) combiné  à la dernière remarque.
\end{proof}


\newpage

\section{Preuve de la dérive exponentielle}

Nous allons mettre ensemble les résultats des sections précédentes pour prouver le théorème de dérive exponentielle. On rappelle quelques hypothèses : $\mu \in \mathcal{P}(SL_{d}(\Z))$ est une probabilité sur $SL_{d}(\Z)$ à support fini engendrant un semigroupe $\Gamma$ fortement irréductible (donc d'adhérence de Zariski $G \subseteq SL_{d}(\R)$ semi-simple, non bornée);  $\chi : \Gamma \rightarrow \R$ est un morphisme de groupes tel que $\chi_{\star}\mu \in \mathcal{P}(\R)$ est centrée. On fait de plus les hypothèses $\mu(e)>0$ et $\chi$ non trivial pour pouvoir appliquer le théorème local limite et les théorèmes de contrôle démontrés dans les sections $7.2$ et $8.6$. La notation $\nu \in \mathcal{M}^{Rad}(\T^d \times \R)$ désigne une mesure de Radon stationnaire ergodique pour la marche induite par ($\mu$, $\chi$) sur $\T^d \times \R$, dont la projection sur $\T^d$ est sans atome. Ces données ont permis de construire un système dynamique fibré $(B^+, T^+, \beta^+)$ dans la section $4.2$. On note $\mathcal{Q}^+_{\infty}$ sa tribu queue. 

\begin{th.}[Dérive exponentielle]\label{derivexp}
Soit $Y$ un espace mesurable standard, $\sigma : B^+ \rightarrow Y$ une application $\mathcal{Q}^+_{\infty}$-mesurable, $E \subseteq B^+$ une partie $\beta^+$-pleine. 
Alors pour presque tout $c \in E$, pour tout $\varepsilon>0$, il existe $c', c'' \in E$, $t \in ]-\varepsilon, \varepsilon[^r$ non nul,  tels que $c''= \phi_{t}(c')$ et $\sigma(c)= \sigma(c')=\sigma(c'')$. 
\end{th.}

\begin{proof}
Soit $U \subseteq \ag^F$, $I \subseteq \R$ des  ouverts convexes bornés non vides, avec $I$ assez grand pour que tous ses translatés $I+t$ intersectent $\chi(\Gamma)$. Posons $W := B \times U \times O_{r}(\R)\times (\T^d \times I)\subseteq B^+$. On va montrer le résultat pour $\beta^+$-presque tout $c \in W$, ce qui est suffisant car $U$ et $I$ peuvent être choisis arbitrairement grands. Les points $c'$, $c''$ seront construits dans $W$ également. L'espace d'arrivée $Y$ étant standard, on peut supposer $Y=[0,1]$. Quitte à multiplier $\nu$ par une constante, on peut supposer $\beta^+(W)=1$. Enfin, comme  la composante sur $\ag$ est uniformément bornée dans la fenêtre $W$, il suffit de prouver le \cref{derivexp} en rempla\c cant le flot $\phi_{t}$ par le flot $\phi'_{t}(b, z, O, x)=(b, z, O, x+ \sum_{i=1}^rt_{i}e_{b,i})$ (voir section $4$ pour la définition des $e_{b,i}$). 

Dans un premier temps, introduisons des compacts $K_{0}, K_{1} \subseteq W$ vérifiant de bonnes propriétés. Soit $\alpha>0$ (petit). D'après le théorème de Lusin, il existe un compact $K_{0} \subseteq W$ de mesure $\beta^+_{W}(K)> 1- \frac{\alpha^2}{2}$ sur lequel les restrictions $\sigma_{|K_{0}} : K_{0} \rightarrow Y$, $\xi_{|K_{0}} : K_{0} \rightarrow \mathscr{P}, \,c=(b,z,O, x) \mapsto \xi_{b}$ sont continues. D'après le théorème d'équirépartition des morceaux de fibres (plus précisément le \cref{nonacc}), il existe un compact $K_{1} \subseteq W$ de mesure $\beta^+_{W}(K_{1}) > 1- \alpha$, et un rang $N_{0}\geq 0$ tels que pour tout  $c\in K_{1}$, $n \geq N_{0}$, on a 
 $$ \beta^+_{W,n,c}(K_{0}) > 1- \alpha$$
 
D'après le théorème de Lusin, quitte à réduire $K_{1}$, on peut supposer que l'application $\sigma_{|K_{1}} : K_{1} \rightarrow Y$ est continue. On montre de plus que la non-dégénérescence des mesures limites $\nu_{b}$ (\cref{cornondeg}) assure que $\beta^+_{W}$-presque  tout $c=(b,z,O, x)\in K_{1}$ est limite d'une suite $(c_{p})_{p \geq 1}$ de points de $K_{1}$ de la forme $c_{p}=(b,z,O, x +(u_{p},t_{p}))$ où $u_{p}\in \R^d$, $t_{p}\in \R$, $u_{p}$ agit sur la coordonnée en $\T^d$, a sa projection sur $\T^d$ en dehors de l'ensemble stable $W_{b}(0)$, et $u_{p}, t_{p}$ tendent vers $0$ quand $p$ tend vers l'infini. En effet,  remarquons que la mesure $\beta^+_{W |K_{1}}$ s'écrit 
$$\beta^+_{W|K_{1}}=\int_{B\times \ag^F \times O_{r}(\R)} \delta_{(b,z,O)}\otimes \nu_{b|K^{(b,z,O)}_{1}} \,d\beta(b)d\leb_{\ag^F}(z)dh(O)$$
 où $K^{(b,z,O)}_{1} :=\{x\in X, (b,z,O,x)\in K_{1}\}$. Ainsi, pour $\beta^+_{W}$-presque  tout $c=(b,z,O, x)\in K_{1}$, on a $x$ dans le support de $\nu_{b|K^{(b,z,O)}_{1}}$, autrement dit, pour tout voisinage $V$ de $(0,0)$ dans $\T^d\times \R$, on a $\nu_{b|K^{(b,z,O)}_{1}}(x+V)>0$.  En remarquant que $W_{b}(x)=x+W_{b}(0)$, le \cref{cornondeg} implique que 
 $$\nu_{b|K^{(b,z,O)}_{1}}(x+(V- W_{b}(0)\times \R))>0$$
  On obtient donc la suite $(c_{p})_{p\geq 1}$ annoncée en se donnant une base décroissante de voisinages $(V_{p})_{p\geq 1}$ de $(0,0)$ dans $\T^d\times \R$, en choisissant $x_{p}$ dans $K^{(b,z,O)}_{1}\cap x+(V_{p}- W_{b}(0)\times \R)$ puis en posant $c_{p}:=(b,z,O, x_{p})$. 

Fixons un point $c \in K_{1}$ admettant une telle approximation $(c_{p}) \in K^{\N^\star}_{1}$. On va construire les points $c', c''$ du théorème de dérive exponentielle comme limites de points pris dans les fibres passant par $c$ et les $c_{p}$. Pour $c'=(b',z', O', x') \in W$, notons $$D_{n}(c') = b'_{1}\dots b'_{n} b^{-1}_{n}\dots b^{-1}_{1} \in SL_{d}(\Z)$$ Le calcul explicite des mesures conditionnelles par rapport aux morceaux de fibres montre que lorsque $c'=(b',z', O', x')$ varie dans $W$ selon la loi $\beta^+_{W,n,c}$, le translaté $c''= c' + (D_{n}(c')u_{p},t_{p}):= (b',z', O', x'+(D_{n}(c')u_{p},t_{p}))$ varie selon la loi $\beta^+_{W,n,c_{p}}$. En particulier, on a 
$$ \beta^+_{W,n,c}\{c' \in W, \,\,c' + (D_{n}(c')u_{p},t_{p}) \in K_{0} \} >1- \alpha$$

On applique maintenant les théorèmes de contrôle donnés par la loi des angles pour la représentation canonique de $G$ sur $V =\R^d$. Quitte à réduire $K_{1}$ dès le début, cela donne :

\begin{lemme*}
 
\begin{itemize}
\item \emph{(Contrôle de la norme) }Soit $\varepsilon_{2}>0$. Il existe $\varepsilon_{1} \in ]0, \varepsilon_{2}[$, $p_{0} \geq 0$ tel que pour tout $p \geq p_{0}$, il existe un entier $n_{p} \geq 1$ pour lequel 
$$\beta^+_{W,n_{p},c}\{c' \in W, \,\,||D_{n_{p}}(c')u_{p}|| \in [\varepsilon_{1}, \varepsilon_{2}]\} >1- \alpha$$

\item \emph{(Contrôle de la direction) }Pour toute constante $\delta>0$, il existe un rang $N_{1}\geq 0$ tel que pour $n \geq N_{1}$, pour tout $p \geq 0$, on a 
$$\beta^+_{W,n,c}\{c' \in W, \,\, d(D_{n}(c') \R u_{p}, V_{b'_{1} \dots b'_{n} T^nb}) < \delta\} > 1- \alpha$$
\end{itemize}

\end{lemme*}

\begin{proof}
Reportée plus bas.
\end{proof}

Concluons la preuve. On se donne $\varepsilon_{2}> \varepsilon_{1}$ et $p_{0}\geq 0$ comme dans le lemme précédent. Quitte à tronquer  la suite $(c_{p})$, on peut supposer que $p_{0}=0$. Par ailleurs, la suite $n_{p}$ tend vers $+\infty$. Quitte à tronquer davantage, on peut donc supposer $n_{p} \geq N_{0}$ pour tout $p\geq 1$. On peut de plus choisir $\alpha< 1/4$ dès le début de la preuve. D'après ce qui précède, on peut alors se donner pour tout $p \geq 1$, un point $c'_{p} \in K_{0}$ de la $n_{p}$ fibre passant par $c$ tel que :
\begin{itemize}
\item[-] $c''_{p}:= c'_{p} + (D_{n_{p}}(c'_{p})u_{p},t_{p}) \in K_{0} \cap F_{n_{p},c_{p}}$
\item[-] $||D_{n_{p}}(c'_{p})u_{p}|| \in [\varepsilon_{1}, \varepsilon_{2}]$
\item[-] $d(D_{n_{p}}(c'_{p}) \R u_{p}, V_{b'^{(p)}_{1} \dots b'^{(p)}_{n_{p}} T^{n_{p}}b}) < \delta_{p}$
 \text{ avec $\delta_{p} \rightarrow 0$} (en notant $c'_{p}= (b'^{(p)}_{1} \dots b'^{(p)}_{n_{p}} T^{n_{p}}b, \dots)$)
\end{itemize} 
Par compacité de $K_{0}$ et quitte à extraire, on peut supposer qu'on a les convergence   $(c'_{p}) \rightarrow c' $ et $(c''_{p}) \rightarrow c''$ avec $c', c'' \in K_{0}$.  

On a les égalités  $\sigma(c)=\sigma(c')= \sigma(c'')$. En effet, comme l'application $\sigma$ est $\mathcal{Q}^+_{\infty}$-mesurable, elle est en particulier constante sur les fibres. On a ainsi $\sigma(c)= \sigma(c'_{p})$ et $\sigma(c_{p})= \sigma(c''_{p})$ pour $p\geq 1$. Comme l'application $\sigma$ est continue sur $K_{0}$ et $K_{1}$, on a finalement  $\sigma(c')= \lim \sigma(c'_{p}) = \sigma(c)$, et $\sigma(c'')= \lim \sigma(c''_{p}) = \lim \sigma(c_{p})= \sigma(c)$. 

Enfin, quitte à extraire on peut supposer que la dérive $D_{n_{p}}(c'_{p})u_{p}$ converge vers un vecteur $v \in \R^d$ de norme $||v|| \in [\varepsilon_{1}, \varepsilon_{2}]$. Vérifions qu'alors $c'$ et $c''$ sont liés par l'action du flot. Ecrivons $c'= (b',z', O', x')$. La convergence de $c'_{p}$ vers $c'$ implique que  $b'^{(p)}_{1} \dots b'^{(p)}_{n_{p}} T^{n_{p}}b \rightarrow b'$. La continuité de l'application $\xi$ sur $K_{0}$ et le  contrôle directionnel de la dérive entrainent alors que $v \in V_{b'}$. Finalement, en utilisant que la suite $(t_{p})_{p\geq 1}$ tend vers $0$, on obtient  $c''= c'+(v,0)$ avec $v \in V_{b'}$, de norme dans $[\varepsilon_{1}, \varepsilon_{2}]$. Comme $\varepsilon_{2}$ est arbitrairement petit, cela prouve la dérive exponentielle pour presque tout point de $K_{1}$. Comme $\alpha$ peut être choisi arbitrairement petit, la preuve est terminée.
\end{proof}

\bigskip

\bigskip

Il reste à démontrer le lemme utilisé dans la preuve de la dérive exponentielle.

\begin{proof}[Démonstration du lemme]
Vérifions le premier point. Notons $\omega \in \ag^\star$ le plus haut poids de $G_{c}$ sur $V= \R^d$. D'après le contrôle de la norme, il existe une constante $C>0$ et un rang $N'_{0}\geq 0$ tel que pour tous $n \geq N'_{0}$, $p\geq 1$ on a :
$$\beta^+_{W, n, c}\{c' \in W, \,\,C^{-1} e^{\omega(\theta_{n}(b))} ||b^{-1}_{n}\dots b^{-1}_{1}u_{p}||  \leq ||D_{n}(c')u_{p}|| \leq C e^{\omega(\theta_{n}(b))} ||b^{-1}_{n}\dots b^{-1}_{1}u_{p}|| \} > 1- \alpha$$
Il suffit donc de montrer que si $\varepsilon_{1} \in ]0, \varepsilon_{2}[$ est assez petit, et $p\geq 0$ assez grand, alors il existe $n \geq N'_{0}$ tel que l'intervalle $[s_{n}, t_{n}]:=e^{\omega(\theta_{n}(b))} ||b^{-1}_{n}\dots b^{-1}_{1}u_{p}||\,[C^{-1}, C]$  est inclus dans $[\varepsilon_{1}, \varepsilon_{2}]$. On note $R= \max\{||g||+ ||g^{-1}||, \,\,g \in \text{supp}\mu\}$. On choisit $\varepsilon_{1}>0$ tel que $\varepsilon_{2}/\varepsilon_{1}> C^2 R^2$. On se donne $p_{0} \geq 0$ tel que pour $p \geq p_{0}$ on a $||u_{p}|| < R^{-2N'_{0}} \varepsilon_{1}$. Soit $p\geq p_{0}$, et $n_{p}$ le premier entier tel que $s_{n_{p}} > \varepsilon_{1}$. Un tel entier existe bien car la suite $(s_{n})$ n'est pas bornée (par la condition $u_{p} \notin W_{b}(0)$ dans $\T^d$ et la positivité du premier Lyapunov sur $V$ qui donne $\omega(\theta_{n}(b)) \rightarrow +\infty$ via le \cref{thetan}). Comme $s_{n+1}/s_{n} \leq R^2$ pour tout $n \geq 1$, on a de plus $n_{p} \geq N'_{0}$ et $s_{n_{p}} \leq \varepsilon_{1} R^2$. Comme $t_{n}/s_{n} \leq C^2$ pour tout $n \geq 1$, on a $t_{n} \leq \varepsilon_{1} R^2 C^2 \leq \varepsilon_{2}$ d'où l'inclusion $[s_{n_{p}}, t_{n_{p}}] \subseteq [\varepsilon_{1}, \varepsilon_{2}]$.  D'où le premier point.

\bigskip

Vérifions le second point. Soit $\delta>0$. D'après le contrôle de la direction donné par la loi des angles (voir $8.6$), il existe un rang $N \geq 0$ tel que pour $n \geq N$, pour tout $p \geq 0$, on a 
$$\beta^+_{W,n,c}\{c' \in W, \,\, d(D_{n}(c') \R u_{p}, V_{\xi^+_{b'_{1} \dots b'_{n}}}) < \delta/2\} > 1- \frac{\alpha}{2}$$
Il reste à vérifier que $ V_{\xi^+_{b'_{1} \dots b'_{n}}}$ est proche de $V_{b'_{1} \dots b'_{n} T^nb}$ pour $n$ grand. Soit $r:= \dim_{\text{prox}}G$ la dimension proximal de $G \subseteq SL_{d}(\R)$. On voit $\mathbb{G}_{r}(V)$ comme un sous-ensemble des parties fermées de $\Pbb(V)$ et on le munit de la distance de Hausdorff. On munit $\mathscr{P}$ de la distance introduite dans la section $4$. L'application $\mathscr{P} \rightarrow \mathbb{G}_{r}(V), \xi \mapsto V_{\xi}$ est continue sur $\PP$ compact, donc uniformément continue. Il existe ainsi $\delta'>0$ tel que pour tous $\xi_{1}, \xi_{2} \in \mathscr{P}$ à distance $d(\xi_{1}, \xi_{2})< \delta'$, on a $d(V_{\xi_{1}}, V_{\xi_{2}})<\delta/2$. Il suffit donc de montrer qu'il existe un rang $N'\geq 0$ tel que pour $n \geq N'$, on a 
$$\beta^+_{W,n,c}\{c' \in W, \,\, d(\xi^+_{b'_{1} \dots b'_{n}}, \xi_{b'_{1} \dots b'_{n} T^nb}) < \delta'\} > 1- \frac{\alpha}{2}$$
C'est une application directe du \cref{LAcor6} du contrôle de la direction dans la section précédente.


\end{proof}


\newpage

\section{Conclusion}

On termine la preuve du \cref{TH} en prouvant le \cref{THnonat} énoncé dans la section $4$ :

\begin{th.*}

Soit  $\mu \in \mathcal{P}(SL_{d}(\Z))$ une probabilité à support fini engendrant un sous semi-groupe $\Gamma \subseteq SL_{d}(\Z)$ fortement irréductible, et soit $\chi : \Gamma \rightarrow \R$ un morphisme de semi-groupes tel que la probabilité $\chi_{\star}\mu \in \mathcal{P}(\R)$ est centrée. Alors toute mesure de Radon $\mu$-stationnaire ergodique $\nu$ sur $\T^d \times \R$ dont la projection $\nu(. \times \R) \in \mathcal{M}(\T^d)$ est sans atome est (à translation près) une mesure de Haar sur $\T^d \times \widebar{\chi(\Gamma)}$.

\end{th.*}

Notons $X:= \T^d \times \R$. On se donne une décomposition  de $\nu$ en mesures limites $(\nu_{b})_{b \in B} \in \mathcal{M}^{Rad}(X)^B$ (cf. section $2$). On a en particulier l'égalité  $\nu = \int_{B}\nu_{b}\,d\beta(b)$. Nous allons montrer que les $\nu_{b}$ sont invariants par translation de leur coordonnée en $\T^d$. C'est alors aussi le cas de $\nu$. On conclura ensuite en utilisant que la projection de $\nu$ sur $\R$ est une mesure de Radon $\chi(\Gamma)$-invariante. On note toujours $r:= \dim_{\text{prox}} \Gamma \in \llbracket 1, d-1 \rrbracket$ la dimension proximale de $\Gamma \subseteq SL_{d}(\R)$.

Soit $b \in B$, $V_{b} \subseteq \R^d$ le $r$-plan limite associé (cf. section $4$). On désintègre la mesure de Radon $\nu_{b}\in \mathcal{M}^{Rad}(X)$ sous l'action de $V_{b}$ par translation de la coordonnée en $\T^d$. On obtient une application mesurable $\sigma_{b} : X \rightarrow \mathcal{M}^{Rad}_{1}(V_{b}),\, x \mapsto \sigma_{b,x}$. On voudrait montrer que les $\sigma_{b,x}$ sont  invariants par une même droite de $V_{b}$. On aurait alors que la mesure $\nu_{b}$ est invariante sous l'action de cette droite. La forte irréductibilité de $\Gamma$ sur $\R^d$ permettrait alors de conclure que $\nu_{b}$ est $\T^d$-invariante pour presque tout $b$ (voir plus bas). Pour  se ramener à ce cas là, on note pour $x\in X$, $V_{b,x}:= [\text{Stab}_{V_{b}}\sigma_{b,x}]_{0}$  la composante neutre du stabilisateur de $\sigma_{b,x}$ dans $V_{b}$ (agissant par translation sur les mesures projectives de $\mathcal{M}^{Rad}_{1}(V_{b})$). C'est un élément de $\mathbb{G}(V_{b})$ ensemble des sous-espaces vectoriels de $V_{b}$. On désintègre $\nu_{b}$ par rapport à l'application $\pi_{b} : X \rightarrow \mathbb{G}(V_{b}), x \mapsto V_{b,x}$ et on obtient une famille mesurable $(\nu_{b,x})_{x \in X} \in \mathcal{M}^{Rad}(X)^X$ $\nu_{b}$-presque partout caractérisée par :
\begin{itemize}
\item $\nu_{b} = \int_{X}\nu_{b,x}\,\,d\nu_{b}(x)$
\item Pour tout $x \in X$, $\nu_{b,x}$ est concentré sur $\pi^{-1}_{b}(V_{b,x}) \subseteq X$.
\end{itemize}
 
\begin{lemme} \label{Conc1}
Soit $b \in B$. Pour $\nu_{b}$-presque tout $x\in X$, la mesure $\nu_{b,x}$ est $V_{b,x}$-invariante.  
\end{lemme}

\begin{proof} Remarquons que l'action de $V_{b}$ sur $X= \T^d\times \R$ ne modifie pas la coordonnée réelle. Il suffit donc de montrer que pour tout intervalle borné $I \subseteq \R$,  pour $\nu_{b}$-presque tout $x\in X$, la mesure finie $\nu_{b,x|\T^d \times I}$ est $V_{b,x}$-invariante.  
 Cela est démontré dans (\cite{BQI}, proposition $4.3$). 
\end{proof}

Le théorème de dérive exponentielle est utilisé pour obtenir le résultat suivant.
\begin{lemme}\label{Conc2}
Pour $\beta$-presque tout $b \in B$, $\nu_{b}$-presque tout $x \in X$, on a $V_{b,x}\neq \{0\}$.
\end{lemme}

Rappelons d'abord un lemme général  les mesures conditionnelles. On dira qu'une action d'un groupe topologique $G$ sur un ensemble $Z$ est à stabilisateurs discrets si pour tout $z\in Z$, le fixateur $G_{z}:=\{g\in G, g.z=z\}$ est discret dans $G$.

\begin{lemme}\label{Conc3}
Soit $Z$ un espace borélien standard, $m$ une mesure $\sigma$-finie sur $Z$, $G$ un groupe localement compact à base dénombrable muni d'une action  sur $Z$ qui est mesurable à stabilisateurs  discrets. De même soit $(Z', m', G')$.  On se donne $f : Z \rightarrow Z'$ une bijection bimesurable telle que $f_{\star}m=m'$ et $\phi : G \rightarrow G'$ un isomorphisme de groupes  topologiques tel que pour tout $z \in Z$, $g\in G$, on a $f(g.z)= \phi(g).f(z)$. 

Alors,  si $\sigma : Z \rightarrow \mathcal{M}_{1}(G)$,  $\sigma' : Z' \rightarrow \mathcal{M}_{1}(G)'$  sont des décomposition de $m$ et $m'$ en mesures conditionnelles pour les actions de $G$ et $G'$, on a pour $m$-presque tout $z \in Z$ que 
$$ \sigma'(f(z)) = \phi_{\star} \sigma(z) $$
\end{lemme}

\begin{proof}
Cela découle de l'unicité des mesures conditionnelles. Voir \cite{Duf} pour plus de détails.
\end{proof}

\begin{proof}[Démonstration du \cref{Conc2}]
On reprend les notations de la section $4$. Pour $\beta \otimes \text{leb}_{\ag}\otimes h$-presque tout $(b,z,O) \in B\times \ag^F\times O_{r}(\R)$,  on a que $(\star)$ la famille de mesures projectives $(\sigma(b,z,O, x))_{x \in X} \in \mathcal{M}^{Rad}_{1}(\R^r)$ est une décomposition de $\delta_{(b,z,O)}\otimes \nu_{b} \in \mathcal{M}^{Rad}((b,z,O)\times X)$ par rapport à l'action de $\R^r$-flot $(\phi_{t})_{t \in \R^r}$. Soit $b \in B$ tel qu'il existe $(z,O)$ vérifiant $(\star)$. Notons $A$ l'isomorphisme vectoriel $A : \R^r \rightarrow V_{b}, t \mapsto  \langle t, e^{\omega(z)}O e_{b} \rangle'$. On a ainsi $\phi_{t}(b,z,O, x) = (b,z,O, x+A(t))$. On  déduit de \cref{Conc3} que pour $\nu_{b}$-presque tout $x \in X$, on a $A_{\star} \sigma(b,z,O, x) = \sigma_{b,x}$. Quitte à bien choisir $(b,z,O)$ dès le départ, le théorème de dérive exponentielle donne que pour $\nu_{b}$-presque tout $x \in X$,   la mesure projective $\sigma(b,z,O, x)$ est invariante selon une droite de $\R^r$ (\cref{corderiv}). On en déduit que $\sigma_{b,x}$ est invariante par une droite $V_{b}$ , i.e. $V_{b,x}\neq \{0\}$.
\end{proof}

\begin{cor} \label{Conccor}
Pour $\beta$-presque tout $b \in B$, $\nu_{b}$-presque tout $x \in X$, on a $V_{b,x}= \T^d$.
\end{cor}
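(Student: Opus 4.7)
The plan is to combine the $\Gamma$-equivariance of the family $(V_{b,x})$, deduced from \cref{Conc3}, with the strong irreducibility of $\Gamma$ on $\R^d$, to upgrade the non-triviality from \cref{Conc2} to full $\T^d$-invariance of each $\nu_{b,x}$. The key heuristic is that \cref{Conc2} gives us a non-zero stabilizer direction, and strong irreducibility will prevent this direction from being trapped in a proper rational subspace.

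First, I would establish the equivariance $V_{b, b_{1}y} = b_{1}V_{Tb,y}$ for $\beta$-a.e. $b$ and $\nu_{Tb}$-a.e.\ $y$. This follows from \cref{Conc3} applied to the bimeasurable bijection $f\colon X\to X, \,y\mapsto b_{1}y$, which intertwines the translation action of $V_{Tb}$ on $X$ with that of $V_{b}=b_{1}V_{Tb}$ via the isomorphism $\phi=b_{1}\colon V_{Tb}\to V_{b}$, together with the equivariance $(b_{1})_{\star}\nu_{Tb}=\nu_{b}$ of the mesures limites. Next, denote by $\pi\colon \R^d\to \T^d$ the canonical projection, and set $H_{b,x}:=\overline{\pi(V_{b,x})}\leq \T^d$. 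This is a closed subgroup of $\T^d$ stabilizing $\nu_{b,x}$ (since $V_{b,x}$ does, by \cref{Conc1}, and the stabilizer is closed). Its neutral component $H_{b,x}^{0}$ is a subtorus, which corresponds to a \emph{rational} subspace $W_{b,x}\subseteq \R^d$. Furthermore, $V_{b,x}\neq\{0\}$ from \cref{Conc2} implies that $\pi(V_{b,x})$ is an infinite subgroup of $\T^d$, so $H_{b,x}$ is infinite, hence $W_{b,x}\neq\{0\}$; and the equivariance above descends to $W_{b,b_{1}y}=b_{1}W_{Tb,y}$.

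The crucial step is then to deduce that $W_{b,x}=\R^d$ almost everywhere. The map $(b,x)\mapsto W_{b,x}$ takes values in the \emph{countable} set $\mathcal{R}:=\bigsqcup_{k}\mathbb{G}_{k}(\R^d)_{\Q}$ of rational subspaces of $\R^d$. Restricting $\beta^X=\int\delta_{b}\otimes\nu_{b}\,d\beta(b)$ to a window $W\subseteq B\times X$ of positive finite measure and pushing forward, one obtains a finite measure on $\mathcal{R}$, invariant under the induced Markov chain governed by $\mu$. By a standard argument about finitely supported random walks on countable sets, every recurrent class is finite, so the support of this measure is concentrated on \emph{finite} $\Gamma$-orbits of rational subspaces. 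Pick such a rational subspace $W_{0}$ of positive measure; then $\bigcup_{\gamma\in\Gamma}\gamma W_{0}$ is a finite $\Gamma$-invariant union of rational subspaces, and strong irreducibility of $\Gamma$ on $\R^d$ forces $W_{0}=\{0\}$ or $W_{0}=\R^d$. Since $W_{b,x}\neq\{0\}$, we conclude $W_{b,x}=\R^d$ a.e., hence $H_{b,x}^{0}=\T^d$ and therefore $H_{b,x}=\T^d$, yielding the desired full $\T^d$-invariance of $\nu_{b,x}$.

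The main obstacle is the third step, namely producing the finite $\mu$-stationary measure on the countable set $\mathcal{R}$ and applying the finite-orbit dichotomy correctly. One must be careful to exploit the local finiteness of $\beta^X$ (using the Radon character of $\nu_{b}$), since $\beta^X$ itself is infinite; and one must verify that the pushforward measure obtained from a window is indeed $\mu$-stationary for the induced chain, which requires the equivariance relation from Step 1 to be compatible with the choice of window. Once this is set up, the strong irreducibility argument terminates the proof cleanly.
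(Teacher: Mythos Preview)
Your outline follows the same strategy as the paper: establish the $\Gamma$-equivariance $gV_{b,x}=V_{gb,gx}$ via \cref{Conc3}, push the map $(b,x)\mapsto V_{b,x}$ forward to a countable $\Gamma$-set, and invoke strong irreducibility to rule out proper nonzero values. Your detour through the rational subspace $W_{b,x}$ is harmless and equivalent to the paper's formulation: closed connected subgroups of $\T^d$ correspond bijectively to rational subspaces of $\R^d$, so your set $\mathcal{R}$ is exactly the paper's set $\mathcal{F}$ together with $\{0\}$, and your ``maximal atom / finite orbit'' argument is the content of the paper's \cref{Conc4}.

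The genuine gap is the step you yourself flag as the main obstacle and do not resolve. Restricting $\beta^X$ to an arbitrary window $W\subseteq B\times X$ of finite measure does \emph{not} produce a $\mu$-stationary pushforward on $\mathcal{R}$: the $\Gamma$-action moves points in and out of $W$, so equivariance alone gives you nothing after restriction. There is no ``induced Markov chain governed by $\mu$'' on such a window in general, because the first-return law to $W$ depends on the starting point, not merely on a single probability on $\Gamma$. The paper handles this by a case split that you are missing. When $\chi(\Gamma)$ is discrete one restricts to a union of blocks $B\times\T^d\times\llbracket -N,N\rrbracket$; the first return to a block depends only on the $\chi$-coordinate, so the induced chain \emph{is} governed by a single probability $\mu_\tau$, and one applies \cref{Conc4} to that chain (after checking, via \cref{0irr}, that $\Gamma_0=\langle\mathrm{supp}\,\mu_\tau\rangle$ is still strongly irreducible). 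When $\chi(\Gamma)$ is dense, no such block structure exists; the paper instead reruns the argument of \S\ref{Nondegnonat} to show that the pushforward of $\beta^X_{|B\times\T^d\times I}$ is independent of the length-$r$ interval $I$, and from this translation-invariance deduces genuine $\mu$-stationarity of the resulting finite measure. Without supplying one of these two mechanisms (or an alternative), your Step~3 does not go through.
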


Pour prouver ce corollaire, nous utiliserons le lemme général suivant :
\begin{lemme}\label{Conc4}
Soit $\mu \in \mathcal{P}(SL_{d}(\Z))$ une probabilité dont le support engendre un semi-groupe $\Gamma$ fortement irréductible sur $\R^d$. Soit $\mathcal{F}$ l'ensemble (dénombrable) des sous-groupes fermés connexes non nuls du tore $\T^d$.  Alors $\mathcal{F}$ n'admet qu'une seule probabilité $\mu$-stationnaire : la masse de Dirac $\delta_{\T^d}$.
\end{lemme}

\begin{proof}
On raisonne par l'absurde en supposant qu'il existe $m \in \mathcal{P}(\mathcal{F})$ une probabilité $\mu$-stationnaire sur $\mathcal{F}$ telle que $m(\{\T^d\})< 1$. Quitte à restreindre $m$ à $\mathcal{F}-\{\T^d\}$ ($\Gamma$-stable) puis normaliser, on peut même supposer $m(\{\T^d\})=0$. On note $\alpha = \max_{u \in \mathcal{F}}(m(u))$ et $\mathcal{F}' = \{u \in \mathcal{F}, m(u)=\alpha\}$. Pour $u \in \mathcal{F}'$, on pose $V_{u} \subseteq \R^d$ le sous espace vectoriel se projetant sur $u$. Alors  $\{V_{u}, u \in \mathcal{F}'\}$ est un ensemble fini $\Gamma$-invariant de sous-espaces vectoriels stricts non triviaux de $\R^d$. Cela contredit la forte irréductibilité de $\Gamma$ sur $\R^d$.
\end{proof}

\begin{proof}[Démonstration du \cref{Conccor}]

Notons $B^X:= B \times X$, $\beta^X:= \int_{B}\delta_{b}\otimes \nu_{b}\,d\beta(b) \in \mathcal{M}^{Rad}(B^X)$. 

Remarquons que pour $\beta^X$-presque tout $(b,x) \in B^X$, $\mu$-presque tout $g \in \Gamma$, on a $g V_{b,x}= V_{gb,gx}$. En effet, pour $\beta$-presque tout $b \in B$, $\mu$-presque tout $g \in \Gamma$, on a  $g_{\star}\nu_{b}= \nu_{gb}$ et $gV_{b} = V_{gb}$. Le \cref{Conc3} implique alors que pour $\nu_{b}$-presque tout $x \in X$, on a  $g_{\star} \sigma_{b,x}=  \sigma_{gb,gx}$, ce qui prouve l'affirmation.     

Considérons l'application $\Phi : B^X \rightarrow \mathcal{F}, (b,x) \mapsto V_{b,x}$. Elle est presque sûrement $\Gamma$-équivariante d'après ce qui précède.  La mesure de Radon $\mu$-stationnaire $\beta^X$ a donc pour image une mesure $\mu$-stationnaire $\Phi_{\star} \beta^X \in \mathcal{M}(\mathcal{F})$. On montre que cette mesure est concentrée sur $\T^d$. 

\bigskip

\emph{$1$er cas :  $\chi(\Gamma) \subseteq \R$ est discret}. C'est alors un sous groupe de $\R$ (cf. \cref{ref0}).  L'ergodicité de $\nu$ implique que $\nu$, et donc les $\nu_{b}$, sont portés par un ensemble de la forme $\T^d \times (r + \chi(\Gamma))$ où $r \in \R$. On peut supposer que c'est $\T^d \times \Z$.  Notons $\tau : B \rightarrow \N \cup \{\infty\}, b \mapsto \inf\{n \geq 1, \chi(b_{1}\dots b_{n})=0\}$ (fini $\beta$-ps) le temps de retour au bloc de départ, et $\mu_{\tau}\in \mathcal{P}(\Gamma)$ la loi de $b_{1}\dots b_{\tau(b)}$ quand $b$ varie selon $\beta$. D'après la section $2$, la mesure $\beta^X$ est $\mu_{\tau}$-stationnaire. Soit $N \geq 0$ un entier. Comme la $\mu_{\tau}$-marche stabilise les blocs $(\T^d \times \{k\})_{k \in \Z}$, la mesure finie  $\beta^X_{|\T^d \times [-N,N]}$ est également $\mu_{\tau}$-stationnaire sur $B^X$. Par ailleurs,  le semi-groupe $\Gamma_{0}$ engendré par le support de $\mu_{\tau}$ est fortement irréductible sur $\R^d$ (\cref{0irr}). Le   \cref{Conc4} implique donc que la mesure image $\Phi_{\star} \beta^X_{|\T^d \times [-N,N]}$  sur $\mathcal{F}$ est concentrée sur $\T^d$. Comme $N$ est arbitraire, c'est finalement le cas de $\beta^X$. 

\bigskip

\emph{$2$-ème cas : $\chi(\Gamma) \subseteq \R$ est dense}. La preuve est essentiellement la même que celle donnée dans la sous-section $(6.2)$ où on montre la non-dégénérescence des mesures limites. On rappelle les grandes lignes. On fixe $(\beta^X_{b})_{b \in B}$ une décomposition de $\beta^X$ en mesures limites ainsi qu'une constante $r>0$, et pour tout $I \subseteq \R$  intervalle de longueur $r$,  et on note $\beta^X_{b,I} := \beta^X_{b | B \times \T^d \times I}$. On montre que les mesures finies $(\Phi_{\star}(\beta^X_{b,I}))_{b,I} \in \mathcal{M}^f(\mathcal{F})$ ont presque-toutes les mêmes poids (comptés avec multiplicités, voir \cref{ref12}). On en déduit que pour presque tout $b \in B$, la mesure $\Phi_{\star}(\beta^X_{b,I})$ ne dépend pas de $I$ à translation près, puis que la mesure finie $m :=\Phi_{\star}\beta^X_{|B \times \T^d \times I} =\int_{B}\Phi_{\star}(\beta^X_{b,I}) \,d\beta(b)$ est $\mu$-stationnaire. Par le \cref{Conc4}, elle donc concentrée sur $\T^d$ ce qui conclut étant donné que $I$ est de taille arbitraire. 
\end{proof}

\bigskip

\begin{cor*}
Pour $\beta$-presque tout $b \in B$, $\nu_{b}$ est $\T^d$-invariante.
\end{cor*}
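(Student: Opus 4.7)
Le plan est de combiner imm�diatement le \cref{Conc1} et le \cref{Conccor} d�j� �tablis, puis d'int�grer l'invariance obtenue contre la d�sint�gration $\nu_{b} = \int_{X} \nu_{b,x} \, d\nu_{b}(x)$ construite dans la section. D'abord, je fixerais $b \in B$ en dehors d'un ensemble $\beta$-n�gligeable pour lequel les conclusions du \cref{Conc1} et du \cref{Conccor} sont simultan�ment valables. Alors, pour $\nu_{b}$-presque tout $x \in X$, le \cref{Conc1} affirme que $\nu_{b,x}$ est invariante par l'action par translation sur la coordonn�e en $\T^d$ du sous-groupe $V_{b,x}$, tandis que le \cref{Conccor} identifie ce sous-groupe � $\T^d$ tout entier. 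La mesure $\nu_{b,x}$ est donc $\T^d$-invariante pour $\nu_{b}$-presque tout $x$.

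Il resterait alors � int�grer cette invariance. Pour tout $v \in \T^d$, en notant $T_{v} : X \rightarrow X, \,(y,t) \mapsto (y+v, t)$ la translation correspondante, j'�crirais, par lin�arit� de l'image directe et gr�ce � l'invariance $\nu_{b}$-presque s�re des $\nu_{b,x}$,
$$ (T_{v})_{\star} \nu_{b} = \int_{X} (T_{v})_{\star} \nu_{b,x} \, d\nu_{b}(x) = \int_{X} \nu_{b,x} \, d\nu_{b}(x) = \nu_{b}. $$
Je n'anticipe aucune difficult� substantielle dans ce dernier passage : tout le travail r�el a d�j� �t� fait en amont dans le \cref{Conccor}, cons�quence du th�or�me de d�rive exponentielle (via le \cref{Conc2}) et du \cref{Conc4} qui utilise de mani�re essentielle la forte irr�ductibilit� de $\Gamma$ sur $\R^d$. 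Le seul point � v�rifier proprement est la commutation entre $(T_{v})_{\star}$ et l'int�grale, qui r�sulte de la $\T^d$-�quivariance mesurable de la d�sint�gration $x \mapsto \nu_{b,x}$ (la translation $T_{v}$ pr�serve la fibration $\pi_{b} : X \rightarrow \mathbb{G}(V_{b})$ puisque l'action de $V_{b}$ commute avec les translations de $\T^d$, donc permute les mesures conditionnelles de fa\c con compatible avec l'invariance d�j� obtenue).
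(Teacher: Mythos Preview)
Ta proposition est correcte et suit exactement la d�marche du papier : combiner le \cref{Conc1} et le \cref{Conccor} pour obtenir la $\T^d$-invariance de $\nu_{b,x}$ pour $\nu_{b}$-presque tout $x$, puis int�grer via $\nu_{b} = \int_{X} \nu_{b,x}\, d\nu_{b}(x)$. Une remarque mineure : la commutation $(T_{v})_{\star}\int_{X} \nu_{b,x}\, d\nu_{b}(x) = \int_{X} (T_{v})_{\star}\nu_{b,x}\, d\nu_{b}(x)$ est purement formelle (�valuer les deux membres sur une fonction test) et ne requiert pas l'argument d'�quivariance de la d�sint�gration que tu esquisses.
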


\begin{proof}
Cela découle du \cref{Conc1}, du \cref{Conccor} et de l'expression de $\nu_{b}$ comme moyenne intégrale des $\nu_{b,x}$.
\end{proof}

\begin{proof}[Preuve du \cref{THnonat}]
D'après le corollaire précédent, les $\nu_{b}$ sont invariants par translation quelconque de la coordonée en $\T^d$.  C'est donc aussi le cas de $\nu$ car $\nu = \int_{B} \nu_{b}\,d\beta(b)$.

\bigskip

Supposons $\chi(\Gamma) \subseteq \R$ discret. C'est alors un sous groupe de $\R$ (cf. \cref{ref0}).  L'ergodicité de $\nu$ implique que $\nu$ est portée par un ensemble de la forme $\T^d \times (r + \chi(\Gamma))$ où $r \in \R$. On peut supposer que c'est $\T^d \times \Z$. La mesure $\nu$ se décompose sur les blocs $\nu= \sum_{k \in \Z} \nu_{|\T^d\times \{k\}}$ et chaque mesure $\nu_{|\T^d\times \{k\}}$ est $\T^d$-invariante donc une mesure de Haar.  D'après le deuxième point du \cref{premcor},  les masses totales des $\nu_{|\T^d\times \{k\}}$ sont les mêmes, ce qui conclut. 

\bigskip

Supposons que  $\chi(\Gamma) \subseteq \R$ est dense. Pour $I \subseteq \R$ mesurable, la mesure $\nu(. \times I) \in \mathcal{M}^f(\T^d)$ est $\T^d$-invariante donc $SL_{d}(\Z)$-invariante. La relation de $\mu$-stationnarité  donne alors que pour $A \subseteq \T^d, I \subseteq \R$ mesurables, on a : $\nu(A \times I) = \int_{G}\nu(A \times [I- \chi(g)]) \, d\mu(g)$. La mesure $\nu(A \times .) \in \mathcal{M}^{Rad}(\R)$ est donc $\chi_{\star} \mu$-stationnaire, i.e.  un multiple de la mesure de lebesgue (cf. \cref{ref0}). Finalement, $\nu(A+z, I+t)= \nu(A \times I)$ pour tout $z \in \T^d$, $t \in \R$,  d'où le résultat.

\end{proof}

\bigskip

\bigskip

\appendix
\newpage

\section{Mesures de Radon limites}\label{AnnexeA}

On définit les mesures limites associées à une mesure de Radon stationnaire et on étudie quelques unes de leurs propriétés. 

\bigskip

Soit $X$ un espace topologique localement compact à base dénombrable, $G$ un groupe localement compact à base dénombrable agissant continument sur $X$, $\mu \in \mathcal{P}(G)$ une probabilité sur $G$, $\nu \in \mathcal{M}^{\text{Rad}}(X)$ une mesure de Radon $\mu$-stationnaire sur $X$. On note $B := G^{\N^\star}$, $\mathcal{B}$ sa tribu produit,  $\beta := \mu^{\otimes \N^\star}$, et pour $n \geq 0$, $\mathcal{B}_{n} \subseteq \mathcal{B}$ la sous-tribu des $n$-premières coordonnées.

\subsection{Existence de mesures limites}

Le résultat suivant définit les mesures limites associées à $\nu$.
\begin{lemme}\label{meslim}
Il existe une application mesurable $B \rightarrow \mathcal{M}^{\text{Rad}}(X), b \mapsto \nu_{b}$  telle que pour $\beta$-presque tout $b \in B$, on ait la convergence faible-$\star$ $(b_{1}\dots b_{n})_{\star} \nu \underset{n \to +\infty}{\rightarrow} \nu_{b}$ . On a de plus :

\begin{itemize}

\item
Pour $\beta$-presque tout $b \in B$, ${b_{1}}_{\star}\nu_{Tb} = \nu_{b}$ où $T$ dénote le shift sur $B$

\item
$\nu \geq \int_{B} \nu_{b} \,\,d\beta (b)$ 
 
 \end{itemize}
 
\end{lemme}

\begin{rem*}
L'inégalité dans le point 2 est optimale au sens où il peut arriver qu'une mesure stationnaire non nulle aient ses mesures limites $\beta$-presque toutes nulles. C'est par exemple le cas pour la mesure de Babillot-Bougerol-Elie pour une marche affine sur $\R$ à Lypaunov nul sans point fixe et admettant un moment d'ordre $2+\varepsilon$. Une preuve sera donnée dans \cite{TBthese}.
\end{rem*}

\begin{proof}[Preuve du \cref{meslim}]
On note $C_{c}(X)$ l'ensemble des fonctions réelles continues  sur $X$ à support compact. Soit $f \in C_{c}(X)$, et pour $n\geq 0$, posons $\varphi_{n} : B \rightarrow \mathbb{R}, \,b \mapsto (b_{1}\dots b_{n})_{\star}\nu\,(f)$. La stationnarité de $\nu$ entraine que la suite de fonctions $(\varphi_{n})_{n \geq 0}$ est une martingale par rapport à la filtration $(\mathcal{B}_{n})_{n \geq 0}$. Elle est de plus uniformément bornée dans $L^1(B,\beta)$ (avec $\int_{B} | \varphi_{n}| \,d\beta \leq \nu(|f|)$). Il existe donc $\varphi_{\infty} \in L^1(B,\beta)$ tel que $\varphi_{n} \underset{ \to +\infty}{\rightarrow} \varphi_{\infty}$ $\beta$-ps. 
 
 On se donne $(K_{n})_{n \geq 0}$ une suite exhaustive de compacts de $X$ et $D \subseteq C_{c}(X)$ un $\Q$-sous espace vectoriel dénombrable tel que pour chaque $n \geq 0$, on ait $\{f \in D, \,\, \text{supp}(f) \subseteq K_{n} \}$ dense dans $C_{K_{n}}(X):=\{f \in C_{c}(X), \,\, \text{supp}(f) \subseteq K_{n} \}$. On demande aussi que $D$ vérifie ces propriétés de densité quand on se restreint à considérer les fonctions non négatives.  D'après le premier paragraphe, il existe $B' \subseteq B$ mesurable plein tel que pour tout $b \in B'$, $f \in D$, la limite de la suite $((b_{1}\dots b_{n})_{\star}\nu\,(f))_{n \geq 0}$ soit définie, on la note $\nu_{b}(f)$.

 \bigskip
 
 Soit $b \in B'$. La fonction $\nu_{b} : D \rightarrow \R$  est une application $\Q$-linéaire non négative sur $D$. On vérifie qu'on peut la prolonger en une forme linéaire non négative $ \nu_{b} : C_{c}(X) \rightarrow \R$ (non nécessairement continue). Pour cela, il suffit de vérifier que pour tout $n \geq 0$, la restriction $\nu_{b} : D\cap C_{K_{n}}(X) \rightarrow \R$ se prolonge en une forme linéaire non négative continue sur $\nu_{b} : C_{K_{n}}(X) \rightarrow \R$. Les hypothèses de densités sur $D$ font que les différents prolongements coïncident et donnent le résultat. 
 
On se donne donc $n \geq 0$ et on vérifie que $\nu_{b |D\cap C_{K_{n}}(X)}$  se prolonge en une forme linéaire non négative continue  $\nu_{b} : C_{K_{n}}(X) \rightarrow \R$. D'après les hypothèses de densité sur $D$, il suffit de vérifier que l'application $\Q$-linéaire $\nu_{b |D\cap C_{K_{n}}(X)}$ est continue.  Soit $n \geq 0$ et $(f_{k})_{k \geq 0} \in (D \cap C_{K_{n}}(X))^\N$ une suite de fonctions de $D$ à support dans $K_{n}$ telle que $||f_{k}||_{\infty} \rightarrow 0$ quand $k \rightarrow +\infty$. On montre que $\lim_{k\to+\infty}\nu_{b}(f_{k}) = 0$. Pour cela, on se donne une fonction non négative $p \in D$ tel que  $p_{|K_{n}} \geq 1$.  On a alors 
 \begin{align*}
  \nu_{b}(f_{k})&= \lim_{n} (b_{1} \dots b_{n})_{\star} \nu (f)\\
  & \leq  \lim_{n} (b_{1} \dots b_{n})_{\star} \nu (||f_{k}||_{\infty} p)\\
  &=||f_{k}||_{\infty} \nu_{b}(p)\\
 \end{align*}
entrainant   $\nu_{b}(f_{k}) \underset{k\to +\infty}{\rightarrow} 0$

 On peut donc  prolonger $\nu_{b}$ en une forme linéaire non négative  $ \nu_{b} : C_{c}(X) \rightarrow \R$. D'après le théorème de représentation de Riesz, cette forme linéaire provient d'une mesure de Radon, que l'on note également $\nu_{b}$. Remarquons que cette mesure est indépendante du prolongement choisi  car déterminée par ses valeurs sur $D$.  
 
 On a ainsi défini une application $B' \rightarrow \mathcal{M}^{Rad}(X), b \mapsto \nu_{b}$. Cette application est bien mesurable (car si $f \in D$, $b \mapsto \nu_{b}(f)$ est mesurable comme limite, puis cela est vrai pour tout $f \in C_{c}(X)$ par passage à la limite). On la prolonge mesurablement à $B$ de fa\c con quelconque. 
 
 \bigskip
 
 Vérifions que l'application $B \rightarrow \mathcal{M}^{\text{Rad}}(X), b \mapsto \nu_{b}$ convient.
 
 Soit $b \in B'$,  $f_{0} \in C_{c}(X)$ on montre que $(b_{1}\dots b_{n})_{\star} \nu \,(f_{0}) \underset{n \to +\infty}{\rightarrow} \nu_{b}(f_{0})$. On se donne $n_{0} \geq 0$ tel que $K_{n_{0}}$ contienne le support de $f_{0}$ et $p \in D$ tel que $p\geq 0$, $p_{|K_{n_{0}}} \geq 1$.  Soit $\epsilon >0$. Soit $f \in D \cap C(K_{n_{0}})$ telle que $||f_{0} - f||_{\infty} <  \epsilon/(3 \nu_{b}(p))$. On a pour tout $n \geq 0$ que :
 
 \begin{align*}
 |\nu_{b}(f_{0}) - (b_{1}\dots b_{n})_{\star} \nu \,(f_{0}) | & \leq |\nu_{b}(f_{0}-f)| + |\nu_{b}(f ) - (b_{1}\dots b_{n})_{\star} \nu \,(f)| + |(b_{1}\dots b_{n})_{\star} \nu \,(f-f_{0})| \\
        & \leq  \epsilon/3 + |\nu_{b}(f ) - (b_{1}\dots b_{n})_{\star} \nu \,(f)| + ||f_{0} - f||_{\infty} \,(b_{1}\dots b_{n})_{\star} \nu \,(p)\\
\end{align*}

On obtient donc que pour $n \geq 0$ assez grand, on a $|\nu_{b}(f_{0}) - (b_{1}\dots b_{n})_{\star} \nu \,(f_{0}) | \leq \epsilon$, d'où la convergence.

Vérifions maintenant les deux points du lemme :

\begin{itemize}

\item Soit $b \in B'$ tel que $Tb \in B'$. Montrons que $(b_{1})_{\star}\nu_{Tb} = \nu_{b}$. Soit $f \in C_{c}(X)$. D'après le paragraphe précédent, on a 
$$(b_{1})_{\star}\nu_{Tb}(f) = \lim (b_{2}\dots b_{n})_{\star} \nu \,(f(b_{1}.)) = \lim (b_{1}\dots b_{n})_{\star} \nu \,(f) = \nu_{b}(f)$$
 D'où l'équivariance.

\item Notons $\nu' := \int_{B} \nu_{b} \,\,d\beta (b)$. C'est une mesure positive sur $X$. Soit $f \in C_{c}(X)$. On a 
 \begin{align*}
 \nu'(f) &= \int_{B} \nu_{b}(f) \,\,d\beta (b)\\
 & = \int_{B} \lim (b_{1}\dots b_{n})_{\star} \nu\,(f) \,\,d\beta (b)\\
 & \leq \liminf \int_{B}  (b_{1}\dots b_{n})_{\star} \nu\,(f) \,\,d\beta (b)\tag{lemme de Fatou }\\
 & = \nu(f)   \tag{stationnarité de $\nu$}\\
 \end{align*} 
On en déduit que $\nu'$ est de Radon, puis que $\nu' \leq \nu$. 
\end{itemize}

\end{proof}

\subsection{Conditionner la marche par un temps d'arrêt}

Ce paragraphe affirme qu'une mesure de Radon $\mu$-stationnaire dont les mesures limites ne sont pas toutes nulles et également $\mu_{\tau}$-stationnaire où $\mu_{\tau}$ désigne la mesure $\mu$ conditionnée par un temps d'arrêt $\tau$.

Plus précisément, soit  $\tau : B \rightarrow \N\cup\{\infty\}$ un temps d'arrêt pour la filtration $(\mathcal{B}_{n})_{n \geq 0}$ des premières coordonnées. On suppose $\tau$ fini $\beta$-ps. On note $\mu_{\tau} \in \mathcal{P}(\Gamma)$ la loi de $b_{1}\dots b_{\tau(b)}$ quand $b$ varie selon $\beta$. Attention, $\mu_{\tau}$ ne correspond pas toujours à la loi de $b_{\tau(b)}\dots b_{1}$, et une $\mu_{\tau}$-trajectoire n'est pas une $\mu$-sous trajectoire à priori.

\begin{lemme}\label{cond}
Si on a l'égalité $\nu = \int_{B}\nu_{b}\,d\beta(b)$, alors la mesure $\nu$ est $\mu_{\tau}$-stationnaire.
\end{lemme}

\begin{proof}
Pour $i \geq 1$, on note $\tau^1= \tau$ et $\tau^{i+1}(b):= \tau^{i}(b) + \tau(T^{\tau^{i}(b)}(b))$. On pose ensuite $\rho_{\tau} : B \rightarrow B, b \mapsto (b_{1}\dots b_{\tau(b)}, b_{\tau(b)+1}\dots b_{\tau^2(b)}, \dots)$. La loi image $\beta_{\tau}:= (\rho_{\tau})_{\star}\beta = (\mu_{\tau})^{\N^\star}$. Pour $\beta_{\tau}$ presque tout $a \in B$, la suite $(a_{1}\dots a_{n})_{\star} \nu$ converge vers une mesure $\nu_{a}$ (qui est une des mesures limites associées à $\nu$). 

Vérifions la $\mu_{\tau}$-stationnarité. On a $\nu = \int_{B} \nu_{b} \,\, d\beta(b)= \int_{B} \nu_{\rho_{\tau}(b)} \,\, d\beta(b)$ car $\nu_{b}=\nu_{\rho_{\tau}(b)}$ $\beta$-ps, donc 
\begin{align*}
\int_{\Gamma} g_{\star}\nu \,\,d\mu_{\tau}(g) &= \int_{B}\int_{B} b'_{1}\dots b'_{\tau(b)}\nu_{b} \,\,d\beta(b)\,d\beta(b') \\
&= \int_{B}\int_{B} \nu_{b'_{1}\dots b'_{\tau(b)} b} \,\,d\beta(b)\,d\beta(b') \\
&= \int_{B}\int_{B} \nu_{\rho_{\tau}(b')_{1} \rho_{\tau}(b)} \,\,d\beta(b)\,d\beta(b') \\
&= \int_{B} \nu_{\rho_{\tau}(b)} \,\,d\beta(b) \\
&=\nu
\end{align*}
\end{proof}

\newpage

\bibliographystyle{abbrv}

\bibliography{bibliographie}

\nocite{BFLM}

\end{document}